\newcommand{\ignore}[1]{}
\newtheorem{theorem}{Theorem}[section]
\newtheorem{lemma}[theorem]{Lemma}
\newtheorem{corollary}[theorem]{Corollary}
\newtheorem{proposition}[theorem]{Proposition}
\newtheorem{algorithm}[theorem]{Algorithm}
\theoremstyle{definition}
\newtheorem{definition}[theorem]{Definition}
\newtheorem{example}[theorem]{Example}
\theoremstyle{remark}
\newtheorem{remark}[theorem]{Remark}
\numberwithin{equation}{section}
\newcommand{\bP}{{\mathbb{P}}}
\newcommand{\bC}{\mathbb{C}}
\newcommand{\bZ}{{\mathbb{Z}}}
\newcommand{\bQ}{{\mathbb{Q}}}
\newcommand{\bR}{\mathbb{R}}
\definecolor{grey}{rgb}{0.75,0.75,0.75}
\definecolor{orange}{rgb}{1.0,0.5,0.5}
\definecolor{brown}{rgb}{0.5,0.25,0.0}
\definecolor{pink}{rgb}{1.0,0.5,0.5}
\newcommand{\adj}{\mathrm{Adj}}
\newcommand{\fM}{{\mathfrak m}}
\newcommand{\fa}{\mathfrak{a}}
\newcommand{\cC}{{\mathcal C}}
\newcommand{\cO}{{\mathcal O}}
\newcommand{\cR}{{\mathcal R}}
\newcommand{\lra}{{\longrightarrow}}
\newcommand{\lambdab}{\pmb{\lambda}}
\newcommand{\lambdapb}{\pmb{\lambda'}}
\newcommand{\lambdappb}{\pmb{\lambda''}}
\newcommand{\mub}{\pmb{\mu}}
\newcommand{\nub}{\pmb{\nu}}
\newcommand{\mupb}{\pmb{\mu'}}
\newcommand{\muppb}{\pmb{\mu ''}}
\newcommand{\fab}{\pmb{\mathfrak{a}}}
\newcommand{\Reg}{\mathcal{R}}
\newcommand{\A}{\mathfrak{a}}
\newcommand{\Z}{\mathbb{Z}}
\newcommand{\Q}{\mathbb{Q}}
\newcommand{\R}{\mathbb{R}}
\newcommand{\Oc}{\mathcal{O}}
\newcommand{\J}{\mathcal{J}}
\begin{document}

\title[Constancy regions of mixed multiplier ideals]{Constancy regions of mixed multiplier ideals in two-dimensional local rings with rational singularities }

\author[M. Alberich-Carrami\~nana]{Maria Alberich-Carrami\~nana}

\author[J. \`Alvarez Montaner]{Josep \`Alvarez Montaner}

\author[F. Dachs-Cadefau]{Ferran Dachs-Cadefau }

\address{Departament de Matem\`atiques\\
Universitat Polit\`ecnica de Catalunya\\ Av. Diagonal 647, Barcelona
08028, Spain} \email{Maria.Alberich@upc.edu, Josep.Alvarez@upc.edu}

\address{KU Leuven\\ Department of Mathematics \\ Celestijnenlaan 200B box 2400\\
BE-3001 Leuven, Belgium}

\email{Ferran.DachsCadefau@kuleuven.be}

\thanks{All three authors were partially supported by Generalitat de Catalunya 2014 SGR-634 project  and
Spanish Ministerio de Econom\'ia y Competitividad
MTM2015-69135-P. FDC  is also supported by the KU Leuven
grant OT/11/069. MAC is also with the  Institut de Rob\`otica i
Inform\`atica Industrial (CSIC-UPC) and the Barcelona Graduate
School of Mathematics (BGSMath).}



\begin{abstract}
The aim of this paper is to study mixed  multiplier ideals associated to a tuple of
ideals 
in a two-dimensional local ring with a rational singularity.
We are interested in the partition of the real positive orthant
given by the regions where the mixed multiplier ideals are constant.
In particular we reveal which information encoded in a mixed multiplier
ideal determines its corresponding jumping wall and we provide an algorithm to
compute all the constancy regions, and their corresponding mixed multiplier ideals,
in any desired range.

\end{abstract}

\maketitle

\section{Introduction}

Let $X$ be a complex algebraic variety with at most a rational singularity at a point $O\in X$ and
let $\cO_{X,O}$ be the corresponding local ring. The study of {\it multiplier
ideals} $\J(\A^\lambda)$ associated to a given ideal  $\fa
\subseteq \cO_{X,O}$ and a parameter $\lambda\in \bR_{>0}$ has received a lot of attention in recent years
mainly because this is one of the few cases where explicit computations can be performed.
Multiplier ideals form a nested sequence of ideals
$$\Oc_{X,O}\varsupsetneq\J(\A^{\lambda_1})\varsupsetneq\J(\A^{\lambda_2})
\varsupsetneq...\varsupsetneq\J(\A^{\lambda_i})\varsupsetneq...$$
and the rational numbers $0< \lambda_1 < \lambda_2 < \cdots$ where
the multiplier ideals change are called \emph{jumping numbers}.
An explicit formula for the set of {jumping numbers} of
a simple complete ideal or an irreducible plane curve has been given by J\"arviletho \cite{Jar11} and Naie \cite{Nai09}
in the case that $X$ is smooth.  More generally, Tucker  gives in \cite{Tuc10} an algorithm
to compute the jumping numbers of any complete ideal when $X$ has a rational singularity.
The approach given by the authors of this manuscript in \cite{ACAMDC13} is
an algorithm that computes sequentially the chain of multiplier ideals. More precisely,
given any jumping number we may give an explicit description of the corresponding multiplier ideal that,
in turn, it allows us to compute the next jumping number.

\vskip 2mm

Given a tuple of ideals
$\fab:=\left(\fa_1,\dots,\fa_r \right)\subseteq
(\cO_{X,O})^r$ and a point
${\lambdab}:=(\lambda_1,\dots,\lambda_r) \in \R_{\geqslant0}^r$,
we may consider the associated {\it mixed multiplier ideal}
$\J\left(\fab^{\lambdab}\right):=\J\left(\fa_1^{\lambda_1}\cdots\fa_r^{\lambda_r}\right)$
that is nothing but a natural extension of the notion of multiplier ideal to this context.
The main differences that we encounter in this setting is that, whereas the multiplier ideals come
with the set of associated jumping numbers, the mixed multiplier ideals come with
a set of {\it jumping walls}. Roughly speaking, the positive orthant $\R_{\geqslant0}^r$ can be decomposed
in a finite set of {\it constancy regions} where any two points in that constancy region
have the same mixed multiplier ideal. These regions are described by rational polytopes whose boundaries are the
aforementioned jumping walls and consist of  points where the mixed multiplier ideal changes.

\vskip 2mm

The case of mixed multiplier ideals has not received as much attention as multiplier ideals.
Libgober and Musta\c{t}\v{a} \cite{LM11} studied  properties of the jumping wall
associated to the constancy region of the origin $\lambdab_0=(0,...,0)$.
Naie in \cite{Nai13} uses mixed multiplier ideals in order to study the irregularity of abelian coverings
of smooth projective surfaces. En passant, he describes a nice property that jumping walls must satisfy.
Cassou-Nogu\`es and Libgober study in \cite{CNL11,CNL14} analogous notions to  mixed multiplier ideals and jumping
walls, the {\it ideals of quasi-adjunction} and {\it faces of quasi-adjunction} (see \cite{Lib02}), 
associated to germs of plane curves. In \cite[Proposition 2.2]{CNL11}, they describe some methods for
the computation of the regions. Moreover, they provide relations between faces of quasi-adjunction and other
invariants such as the Hodge spectrum or the Bernstein-Sato ideals.
Their methods are refined in \cite{CNL14}, where they provide an explicit description of the jumping walls.

\vskip 2mm

Closely related to multiplier ideals we have the so-called {\it test ideals} in positive characteristic.
 P\'erez \cite{Per13} studied the constancy regions of mixed test ideals and described the corresponding 
 jumping walls  using $p$-fractals.

\vskip 2mm

The aim of this manuscript is to extend the methodology that we developed in \cite{ACAMDC13}
in order to provide an algorithm that allow us to compute all the {\it constancy regions}
in the positive orthant for any tuple of ideals. The paper is structured as follows:
In Section 2 we introduce the theory of mixed multiplier ideals in a very detailed way.
In Section 3 we develop the technical results that lead to the main result of the paper. Namely, in
Theorem \ref{region} we provide a formula to compute the region associated to any point in the positive
orthant. This formula leads to a very simple algorithm (see Algorithm \ref{A2}) that
computes all the constancy regions.
Finally, in Section 4 we extend the notion of {\it minimal jumping divisor} introduced
in \cite{ACAMDC13} to the context of mixed multiplier ideals. In particular, the description
of these divisors is really useful in the proof of the key technical results of Section 3.

\vskip 2mm

The results of this work are part of the Ph.D. thesis of the third author \cite{DC16}.
One may find there some extra details of properties of mixed multiplier ideals as well
as many examples that illustrate our methodology.

\vskip 2mm

{\it Acknowledgements:} This project began during a research stay of the third author at the Institut de 
Math\'ematiques de Bordeaux. He would like to thank
Pierrette Cassou-Nogu\`es  for her support and hospitality.

\newpage


\section{Mixed multiplier ideals}
Let $X$ be a normal surface and $O$ a point where $X$ has at worst a
rational singularity. Namely, for any desingularization $\pi: X'
\rightarrow X$  the stalk at $O$  of the higher direct image $R^1
\pi_* \cO_{X'}$ is zero. For more insight  on the  theory of
rational singularities we refer to the seminal papers by  Artin
\cite{Art66} and  Lipman \cite{Lip69}.

\vskip 2mm

Consider a common {\em log-resolution} of a set of non-zero ideal
sheaves $\fa_1,\dots,\fa_r \subseteq \cO_{X,O}$. Namely, a birational
morphism $\pi: X' \rightarrow X$ such that

\begin{enumerate}
\item[$\cdot$] $X'$ is smooth,
\item[$\cdot$]  For $i=1,\dots , r$ we have $\fa_i\cdot\cO_{X'} = \cO_{X'}\left(-F_i\right)$ for some effective
Cartier divisor $F_i$,
\item[$\cdot$] $\sum_{i=1}^r F_i+E$ is a divisor with simple normal crossings where $E = Exc\left(\pi\right)$ is the exceptional locus.
\end{enumerate}

Since the point $O$ has (at worst) a rational singularity, the
exceptional locus $E$ is a tree of smooth rational curves
$E_1,\dots,E_s$. The divisors $F_i=\sum_j e_{i,j} E_j$ are integral
divisors in $X'$ which can be decomposed  into their  {\it
exceptional} and {\it affine} part according to the support, i.e.
$F_i=F_i^{\rm exc} + F_i^{\rm aff}$ where
$$ F_i^{\rm exc}= \sum_{j=1}^s e_{i,j} E_j \hskip 5mm {\rm and} \hskip 5mm  F_i^{\rm aff}= \sum_{j=s+1}^t e_{i,j} E_j.$$
Whenever $\fa_i$ is an $\fM$-primary ideal\footnote{Here $\fM =
\fM_{X,O} \subseteq \cO_{X,O}$ is the maximal ideal of the local
ring $\cO_{X,O}$ at $O$.  An $\fM$-primary ideal is
 identified with an ideal sheaf that equals $\cO_X$ outside the
point $O$.}, the divisor $F_i$ is just supported on the exceptional
locus. i.e. $F_i=F_i^{\rm exc}$.

\vskip 2mm

For any exceptional component $E_j$, we define the {\em excess} (of
$\fa_i$) at $E_j$ as $\rho_{i,j} = - F_i \cdot E_j$. We also recall
the following notions that will play a special role:
\begin{enumerate}
\item[$\cdot$] A component $E_j$ of $E$ is a {\em rupture} component if
it intersects at least three more components of $E$ (different from
$E_j$).

\item[$\cdot$] We say that $E_j$ is {\em dicritical} if $\rho_{i,j} > 0$ for some $i$. By
\cite{Lip69},
they correspond to {\it Rees valuations}. Non-exceptional components
also correspond to {\it Rees valuations}.
\end{enumerate}

\vskip 2mm

\subsection{Complete ideals and antinef divisors}\label{unloading}

Throughout this work we will heavily use the one to one
correspondence, given by Lipman in \cite[\S 18]{Lip69}, between {\it
antinef divisors} in  $X'$ and {\it complete ideals} in $\Oc_{X,O}$.
First recall that given an effective $\bQ$-divisor $D=\sum d_i E_i $
in $X'$  we may consider its associated (sheaf) ideal
$\pi_{\ast}\Oc_{X'}(-D):= \pi_{\ast}\Oc_{X'}(-\lceil D\rceil)$. Its
stalk at $O$ is
$$I_D:=\{ f\in \Oc_{X,O} \hskip 2mm | \hskip 2mm v_i(f)\geqslant \lceil d_i \rceil \hskip 2mm \text{for all} \hskip 2mm  E_i \leqslant D \}$$

\noindent This is a complete ideal of $\Oc_{X,O}$ which is
$\fM$-primary whenever $D$ has exceptional support.

\vskip 2mm

An {\it antinef} divisor is an effective divisor $D$ in $X'$  with
integral coefficients such that $-D\cdot E_i \geqslant 0$,  for
every exceptional prime divisor $E_i$, $i=1,\dots,s$. The affine
part of  $D$ satisfies $ D^{\rm aff}\cdot E_i \geqslant 0$ therefore
$D$ is antinef whenever $- D^{\rm exc}\cdot E_i \geqslant D^{\rm
aff}\cdot E_i$. One of the advantages to work with antinef divisors
is that they provide a simple characterization for the inclusion (or
strict inclusion) of two given complete ideals. Namely, given two
antinef divisors $D_1,D_2$ in $X'$ we have $\pi_*
\Oc_{X'}(-D_1)\supseteq \pi_* \Oc_{X'}(-D_2)$  if and only if
${D_1}\leqslant D_2$. The strict inclusion is satisfied if and only
if ${D_1} < D_2$. For non-antinef divisors we can only claim the
inclusion $\pi_* \Oc_{X'}(-D_1) \supseteq \pi_* \Oc_{X'}(-D_2)$
whenever $D_1\leqslant D_2$.

\vskip 2mm

In general we may have different $\bQ$-divisors defining the same
ideal. In this case we will say that they are {\it equivalent}. To
find a representative in the equivalence class of a given divisor
$D$ we will consider its so-called {\it antinef closure}. This is
the unique minimal integral antinef divisor $\widetilde{D}$
satisfying $\widetilde{D}\geqslant D$. To compute the antinef
closure we  use an inductive procedure called {\it unloading} that
has been described by Enriques \cite[IV.II.17]{EC15}, Laufer
\cite{Lau72}, Casas-Alvero \cite[\S 4.6]{Cas00} or Reguera
\cite{Reg96} among others. For completeness we briefly recall the
version described in \cite{ACAMDC13}:

\vskip 2mm

{\bf Unloading procedure:}   Let $D$ be any $\bQ$-divisor in $X'$.
Its {\it excess} at the exceptional prime divisor $E_i$ is the
integer $\rho_i= -\lceil D \rceil \cdot E_i$. Denote by $\Theta$ the
set of exceptional components $E_i \leqslant D$ with negative
excesses, i.e. $$\Theta:= \{E_i \leqslant D_{\rm exc} \hskip 2mm |
\hskip 2mm \rho_i= -\lceil D \rceil \cdot E_i <0 \}.$$ To {\it
unload values} on this set is to consider the new divisor
$$D'= \lceil D \rceil + \sum_{E_i \in \Theta} n_i E_i,$$ where $n_i=
\left \lceil \frac {\rho_i}{E_i^2} \right \rceil$. In other words,
$n_i$ is the least integer number such that
$$(\lceil D\rceil+n_i E_i)\cdot E_i= -\rho_i + n_i E_i^2 \leqslant 0.$$

\subsection{Mixed multiplier ideals}\label{sec:mmi}

Given a tuple of ideals $\fab:=\left(\fa_1,\dots,\fa_r \right)\subseteq
(\cO_{X,O})^r$ and a point
$\mathbf{\lambdab}:=(\lambda_1,\dots,\lambda_r) \in \R_{\geqslant0}^r$,
the corresponding {\it mixed multiplier ideal} is defined
as\footnote{By an abuse of notation, we will also denote
$\J\left(\fab^{\bf \lambda}\right)$ its stalk at $O$ so we will omit
the word ''sheaf'' if no confusion arises.}
\begin{equation*} 
\J\left(\fab^{\mathbf{\lambda}}\right):=\J\left(\fa_1^{\lambda_1}\cdots\fa_r^{\lambda_r}\right)
= \pi_*\cO_{X'}\left(\left\lceil K_{\pi} - \lambda_1 F_1- \cdots -
\lambda_r F_r \right\rceil\right)
\end{equation*}

\noindent where the {\it relative canonical divisor} $$ K_{\pi}=
\sum_{i=1}^s k_j E_j $$ is a $\bQ$-divisor in $X'$ supported on the
exceptional locus $E$ and, due to the fact that the matrix of
intersections $\left(E_i\cdot E_j\right)_{1\leqslant i,j \leqslant
s}$ is negative-definite,  it is characterized by the property
\begin{equation} \label{eq-relative-canonical}
\left(K_{\pi}+E_i\right)\cdot E_i  = -2
\end{equation}
for every exceptional component $E_i$, $i=1,\dots,s$. As usual
$\lfloor \cdot \rfloor$ and $\lceil \cdot \rceil$ denote the
operations that take the {\em round-down} and {\em round-up}  of a
given $\bQ$-divisor.

\vskip 2mm

Whenever we only consider a single ideal $\fa:=(\fa_1) \subseteq
\cO_{X,O}$ we recover the usual notion of {\it multiplier ideal} and
is not difficult to check out that mixed multiplier ideals satisfy
analogous properties. For example, the definition of mixed
multiplier ideals is independent of the choice of log resolution,
they are complete ideals and are invariants up to integral closure
so we can always assume that the ideals $\fa_1,\dots,\fa_r$ are
complete. For a detailed overview  we refer to the book of
Lazarsfeld \cite{Laz04}.

\vskip 2mm

\begin{remark} \label{line}
The mixed multiplier ideals of a tuple $\fab=\left(\fa_1,\dots,\fa_r
\right)\subseteq (\cO_{X,O})^r$ contained in the ray passing through
the origin $O$ in the direction of a vector $(u_1,\dots,u_r)\in
\Q_{\geqslant 0}^r$ are the usual multiplier ideals of the ideal
$\fa_1^{\alpha u_1}\cdots\fa_r^{\alpha u_r}$ with a convenient
$\alpha \in \Z_{>0}^r$ such that $\alpha\cdot u_i\in\Z$ for all $i$.

\end{remark}

\vskip 2mm

From the definition of mixed multiplier ideals one can easily deduce
properties on the contention of the ideal corresponding to a fixed
point $\lambdab \in \R_{\geqslant 0}^r$ with respect to those ideals
of points in its neighborhood.  In the sequel, $B_\varepsilon(\lambdab)$ will denote the Euclidean
open ball centered in $\lambdab$ with radius $\varepsilon >0$.  The following properties should be
well-known to experts but we collect them here for completeness. For a detailed proof we refer to \cite{DC16}.

\vskip 2mm

$\bullet$ {\bf Positive orthant properties: }

\begin{enumerate}
\item[$\cdot$] We have $\J(\fab^{\lambdab})\supseteq\J(\fab^{\lambdapb})$ for any
$\lambdapb\in\lambdab+\R^r_{\geqslant 0}$.

\vskip 2mm

\item[$\cdot$] We have $\J(\fab^{\lambdab})=\J(\fab^{\lambdapb})$ for any
$\lambdab'\in\left(\lambdab+\R^r_{\geqslant 0}\right)\cap
B_\varepsilon(\lambdab)$ with $\varepsilon>0$ small enough.

\vskip 2mm

\item[$\cdot$] Let $\lambdapb\in\lambdab+\R^r_{\geqslant 0}$ be a point such that $\J(\fab^{\lambdab})=\J(\fab^{\lambdapb})$.
Then, $\J(\fab^{\lambdab})=\J(\fab^{\lambdappb})$  for any
${\lambdappb}\in\left(\lambdab+\R^r_{\geqslant
0}\right)\cap\left(\lambdapb-\R^r_{\geqslant 0}\right)$.
\end{enumerate}

$\bullet$ {\bf Negative orthant properties: }

\begin{enumerate}

\item[$\cdot$] Let $\lambdapb\in \lambdab-\R^r_{\geqslant 0}$ be a point such that
$\J\left(\fab^{\mupb}\right)\varsupsetneq\J\left(\fab^{\lambdab}\right),$
for any $\mupb\neq \lambdab$ in the segment $\overline{\lambdab \lambdapb}$.
Then, any $\lambdappb\in \lambdab-\R^r_{\geqslant 0}$ also satisfy
$\J\left(\fab^{\muppb}\right)\varsupsetneq\J\left(\fab^{\lambdab}\right),$
for any $\muppb \neq \lambdab$ in the segment $\overline{\lambdab \lambdappb}$.

\vskip 2mm

\item[$\cdot$] We have $\J\left(\fab^{\pmb{\lambdapb}}\right)=\J\left(\fab^{\pmb{\lambdappb}}\right)$
for any $\pmb{\lambdapb} , \pmb{\lambdappb} \in \left(\lambdab -
\R_{\geqslant0}^r\right) \cap B_{\varepsilon}(\lambdab)$ with
$\varepsilon>0$ small enough.
\end{enumerate}

\vskip 2mm

The above results give us some understanding of the behavior of the
mixed multiplier ideals in the positive and negative orthants of a
given point $\lambdab\in\R^r_{\geqslant 0}$ . Indeed, we can give
the following result for the rest of points in a small neighborhood
of $\lambdab$.

\vskip 2mm

$\bullet$ {\bf Points in a small neighborhood: }
The mixed multiplier ideal associated to some $\lambdab \in
\R_{\geqslant0}^r$ is the smallest among the mixed multiplier ideals
in a small neighborhood. That is, we have
$\J\left(\fab^{\lambdapb}\right)\supseteq
\J\left(\fab^{\lambdab}\right),$ for any $\lambdapb \in
B_{\varepsilon}(\lambdab)$ and $\varepsilon >0$ small enough.

\subsection{Jumping Walls}

The most significative difference that we face when dealing with
mixed multiplier ideals is that, whereas the usual multiplier ideals
come with an attached set of numerical invariants, the {\em jumping
numbers} (see \cite{ELSV04}), the corresponding notion for mixed
multiplier ideals is more involved and is described in terms of the
so-called {\em jumping walls} that we will introduce next. As these
notions are based on the contention of multiplier ideals it is then
natural to consider the following:

\vskip 2mm

\begin{definition}\label{region}
Let $\fab:=\left(\fa_1,\dots,\fa_r \right)\subseteq (\cO_{X,O})^r$ be a tuple
of ideals. Then, for each $\lambdab\in \R_{\geqslant0}^r$, we
define:

\vskip 2mm

$\cdot$ The {\it region} of $\lambdab $: \hskip 21mm
$\mathcal{R}_{\fab}\left(\lambdab\right)=\left\{\lambdapb \in
\R_{\geqslant 0}^r \hskip 2mm \left|
 \hskip 2mm \J\left(\fab^{\lambdapb}\right)\supseteq\J\left(\fab^{\lambdab}\right)\right\}\,\right. $.

 \vskip 2mm

$\cdot$ The {\it constancy region} of $\lambdab$: \hskip 3mm
$\mathcal{C}_{\fab}\left(\lambdab\right)=\left\{\lambdapb  \in
\R_{\geqslant 0}^r \hskip 2mm \left| \hskip 2mm
\J\left(\fab^{\lambdapb}\right)=\J\left(\fab^{\lambdab}\right)\right\}\,\right. $.

\end{definition}

\begin{remark}

For a single ideal $\fa \subseteq \cO_{X,O}$, the usual multiplier
ideals form a discrete nested sequence of ideals
$$\Oc_{X,O}\supseteq\J(\A^{\lambda_0})\varsupsetneq\J(\A^{\lambda_1})\varsupsetneq \J(\A^{\lambda_2})\varsupsetneq...\varsupsetneq\J(\A^{\lambda_i})\varsupsetneq...$$
indexed by an increasing sequence of rational numbers $0=\lambda_0 <
\lambda_1 < \lambda_2 < \ldots$, the aforementioned  {\it jumping
numbers}, such that for any $\lambda \in [\lambda_i,\lambda_{i+1})$
it holds
$$\J(\A^{\lambda_i})=\J(\A^\lambda)\varsupsetneq\J(\A^{\lambda_{i+1}}).$$
Therefore, the { region} and the constancy region of $\lambda $ are
respectively  $\mathcal{R}_{\fa}(\lambda)= [\lambda_0,
\lambda_{i+1})$ and $\mathcal{C}_{\fa}(\lambda)= [\lambda_i,
\lambda_{i+1})$.

\end{remark}

From now on we will consider $\bR^r_{\geqslant 0}$ and its subsets endowed
with the subspace topology from the Euclidean topology of $\bR^r$. Thus, any region
$\mathcal{R}_{\fab}\left(\lambdab\right)$ is an open neighborhood of $\lambdab \in \bR^r_{\geqslant 0}$
by properties of multiplier ideals in the neighborhood of a given point. Clearly, we have
$\mathcal{R}_{\fab}\left(\lambdab\right) \supseteq \mathcal{C}_{\fab}\left(\lambdab\right) \ni \lambdab$
and the constancy regions are topological varieties of dimension $r$ with boundary.

\vskip 2mm

The property that relates two points $\lambdab, \lambdapb \in \bR^r_{\geqslant 0}$ whenever
$\J\left(\fab^{\lambdapb}\right)=\J\left(\fab^{\lambdab}\right)$ defines an equivalence relation
in $\bR^r_{\geqslant 0}$, whose classes are the constancy regions. Hence the constancy regions
provide a partition of the positive orthant and any bounded set intersects only a finite number
of them, due to the definition of mixed multiplier ideals in terms of a log-resolution.

\vskip 2mm

There is a partial ordering on the constancy regions: $\mathcal{C}_{\fab}\left(\lambdapb\right) \preccurlyeq
\mathcal{C}_{\fab}\left(\lambdab\right)$ if and only if
$\J\left(\fab^{\lambdapb}\right) \supseteq \J\left(\fab^{\lambdab}\right)$. Equivalently, if and only if
$\lambdapb \in \mathcal{R}_{\fab}\left(\lambdab\right)$ (which is also equivalent to
$\mathcal{C}_{\fab}\left(\lambdapb\right) \subseteq \mathcal{R}_{\fab}\left(\lambdab\right)$
or to $\mathcal{R}_{\fab}\left(\lambdapb\right) \subseteq \mathcal{R}_{\fab}\left(\lambdab\right)$).
Notice that the minimal element is the constancy region $\mathcal{C}_{\fab}\left(\lambdab_0\right)$ of the origin
$\lambdab_0=(0,\dots,0)$. One of the aims of this work is to provide a set of points which includes at least one representative
for each constancy region\footnote{For multiplier ideals we have a total order on the constancy regions, and the
representative that we take is simply the corresponding jumping number.}. These points will be taken over
the boundary of regions
$\mathcal{R}_{\fab}\left(\lambdab\right)$ associated to some  $\lambdab $,
i.e. the points where we have a change in the corresponding mixed
multiplier ideals. Taking into account the behavior of these ideals
in the neighborhood of a given point, we introduce the notion of
jumping point.

\begin{definition}\label{def:JP}
Let $\fab:=\left(\fa_1,\dots,\fa_r \right)\subseteq (\cO_{X,O})^r$ be a tuple
of ideals. We say that $\lambdab \in \R^r_{\geqslant 0}$ is a
\emph{jumping point} of $\fab$ if
$\J\left(\fab^{\lambdapb}\right)\varsupsetneq\J\left(\fab^{\lambdab}\right)$
for all $\lambdapb \in \{\lambdab - \R_{\geqslant0}^r\} \cap
B_{\varepsilon}(\lambdab)$ and $\varepsilon >0$ small enough.
\end{definition}

It follows from the definition of mixed multiplier ideals that the
jumping points $\lambdab \in \bR^r_{\geqslant 0}$   must lie on
hyperplanes of the form
\begin{equation} \label{hyperplanes}
 H_i: \hskip 1mm e_{1,i} z_1+ \cdots +  e_{r,i} z_r= \ell_i + k_j
 \hskip 8mm i=1,\dots,s
\end{equation} where $\ell_i \in \bZ_{>0}$. In particular,
each hyperplane $H_i$ is associated to an exceptional divisor $E_i$.
Therefore, the region $\cR_{\fab}(\lambdab)$ associated to a point
$\lambdab\in \bR_{\geqslant 0}^r$ is a {\it rational convex
polytope} defined  by
$$  e_{1,i} z_1+ \cdots +  e_{r,i} z_r < \ell_i + k_i,$$
 i.e. the minimal region in
$\bR^r_{\geqslant 0}$ described by these inequalities, for suitable $\ell_i$..

\begin{definition}
Let $\fab:=\left(\fa_1,\dots,\fa_r \right)\subseteq (\cO_{X,O})^r$ be a tuple
of ideals. The {\it jumping wall} associated to $\lambdab \in
\bR^r_{\geqslant 0}$ is the boundary of the region
$\cR_{\fab}(\lambdab)$. One usually refers to the jumping wall of
the origin as the {\it log-canonical wall}.
\end{definition}

Notice that the facets of the jumping wall of $\lambdab \in
\bR^r_{\geqslant 0}$ are also rational convex polytopes supported on
 the hyperplanes $H_i$ considered in equation
(\ref{hyperplanes}) that provide the minimal region. We will refer
to them as the {\it supporting hyperplanes} of the jumping wall.

\vskip 2mm

\begin{remark}
Whenever we intersect the jumping walls of a tuple ${\fab=\left(\fa_1,\dots,\fa_r \right)\subseteq (\Oc_{X,O})^r}$
with a
ray from the origin in the
direction of a vector $(u_1,\dots,u_r)\in \Q_{\geqslant 0}^r$, we
obtain (conveniently scaled) the jumping numbers of the ideal $\fa_1^{\alpha u_1}\cdots\fa_r^{\alpha u_r}$ 
with $\alpha\cdot u_i\in\Z$ for all $i$. In particular, the intersections of
the coordinate axes with the jumping walls provide the jumping
numbers of the ideals $\fa_i$,  $i=1,\dots,r$.

\end{remark}

\vskip 2mm

Now we turn our attention to the constancy region of a given point
$\lambdab \in \bR_{\geqslant 0}^r$. In general the constancy region
$\cC_{\fab}(\lambdab)$
is not necessarily a convex polytope. Its boundary is entirely formed by
jumping points and it has two components. Roughly speaking, the {\it inner}
part of the boundary is $\cC_{\fab}(\lambdab) \backslash \cC_{\fab}(\lambdab)^\circ $,
i.e. the non-interior points of $\cC_{\fab}(\lambdab)$, which are the points in $\cC_{\fab}(\lambdab)$ closest
to the origin $\lambdab_0$.
The {\it outer} part is $\overline{\cC_{\fab}(\lambdab)} \backslash \cC_{\fab}(\lambdab)$
formed from the points in the adherence of $\cC_{\fab}(\lambdab)$
which are not in the constancy region, which are the points in $\cC_{\fab}(\lambdab)$ further away
from the origin $\lambdab_0$. Notice that this later component is contained
in the boundary of the region $\cR_{\fab}(\lambdab)$. In particular
the facets of the outer boundary
of the constancy region $\cC_{\fab}(\lambdab)$ are contained in the
facets of the corresponding region so they have the same supporting
hyperplanes. However, it will be important to distinguish the outer facets
of $\cC_{\fab}(\lambdab)$ from the facets of $\cR_{\fab}(\lambdab)$
and it is for this reason that we will refer to them as {\it
$\cC$-facets}. Namely, a $\cC$-facet of $\cC_{\fab}(\lambdab)$ is the intersection of the boundary of a connected component of
$\cC_{\fab}(\lambdab)$ with a supporting hyperplane of $\cR_{\fab}(\lambdab)$. Indeed, every facet of a jumping wall decomposes into
several $\cC$-facets associated to different mixed multiplier
ideals.

\begin{figure}[ht!!]
  \begin{center}
  \begin{minipage}{0.4\textwidth}
  \medskip
  \includegraphics[width=.7\textwidth]{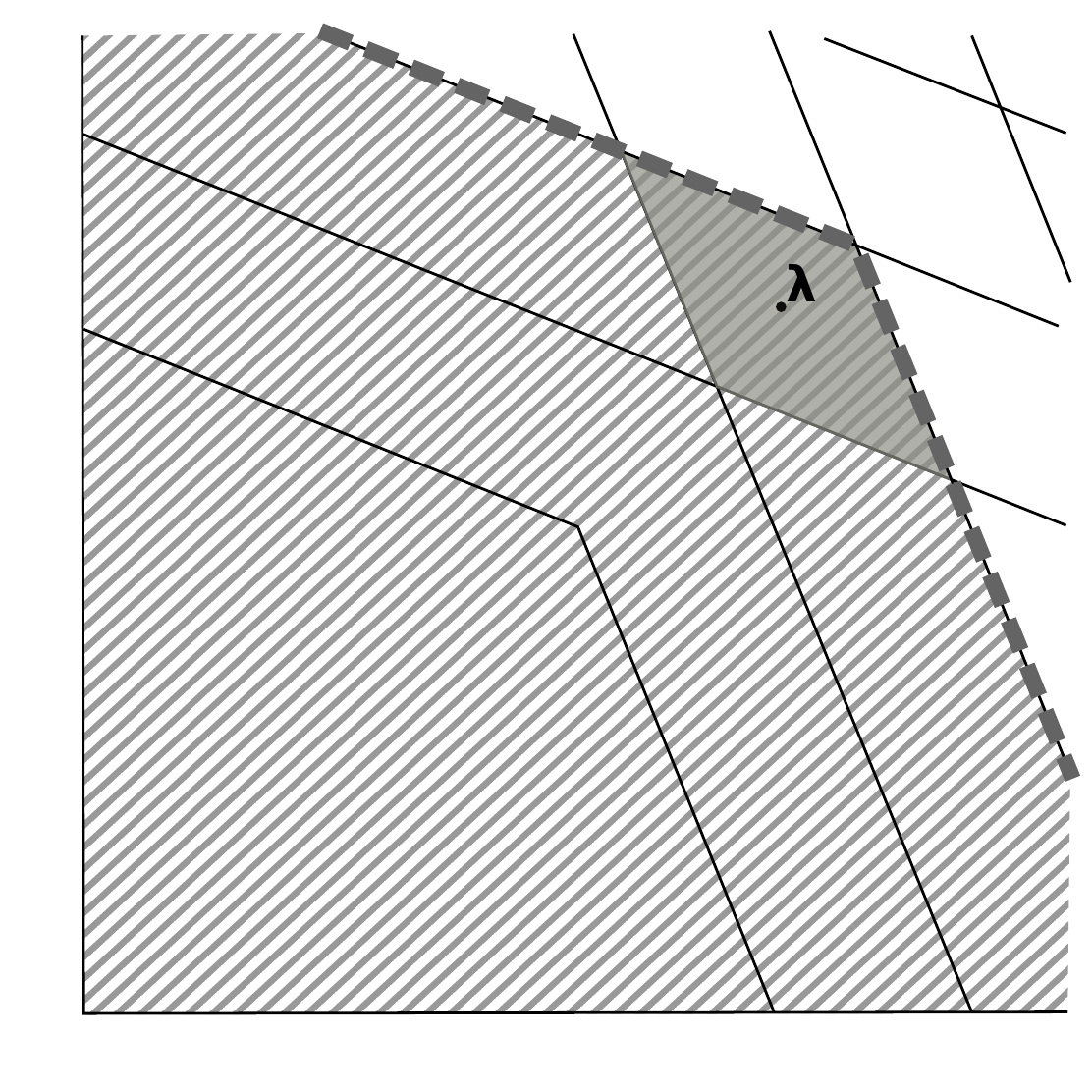}
  \label{fig:JW}
  \end{minipage}
  \begin{minipage}{0.4\textwidth}
  \medskip
  \includegraphics[width=.7\textwidth]{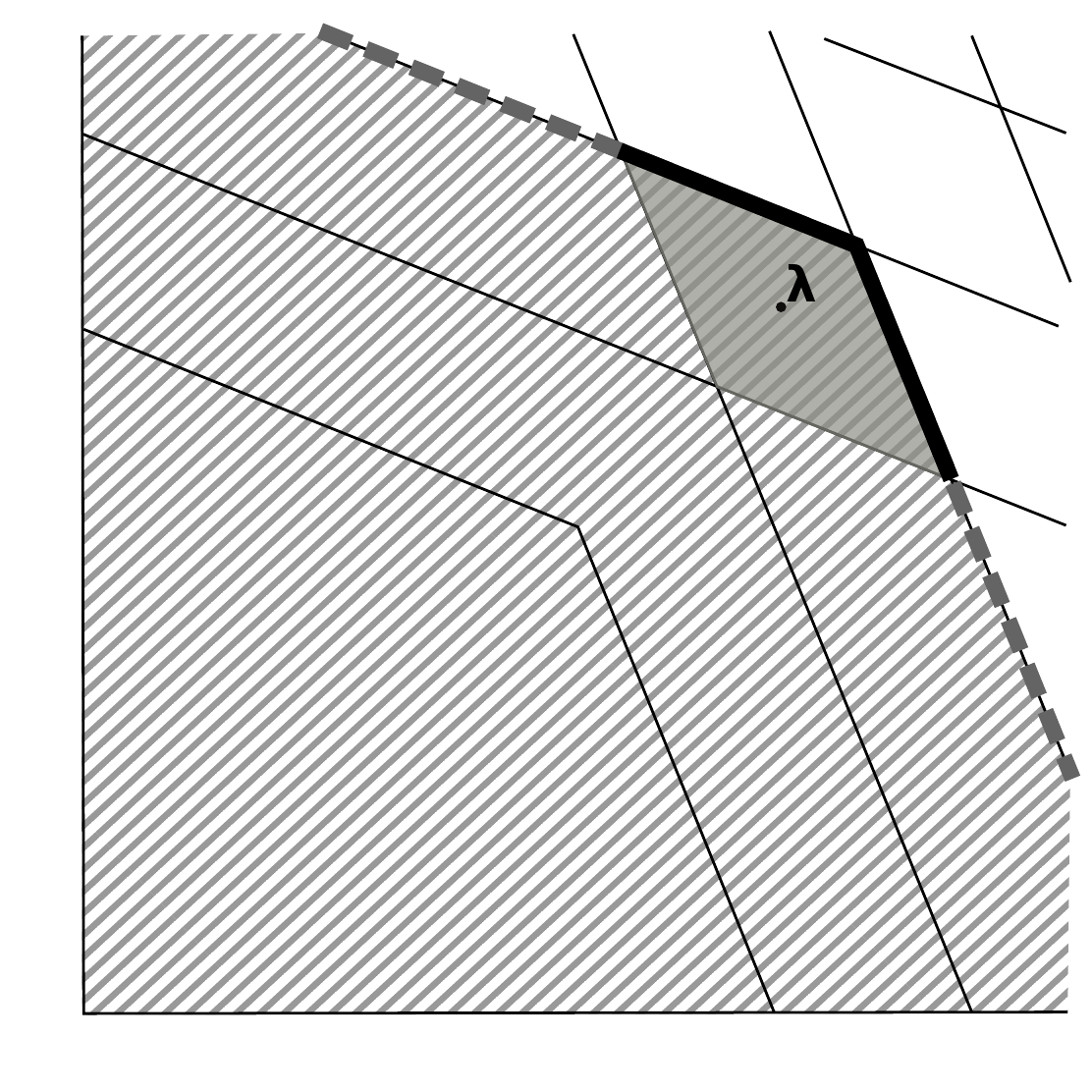}
  \label{fig:Cfacet}
  \end{minipage}
  \end{center}

\caption{On the left, an example of
$\mathcal{R}_{\fab}\left(\lambdab\right)$ in striped gray lines and
the jumping wall associated to $\lambdab$ in thick dotted gray
lines. On the right, the corresponding
$\mathcal{C}_{\fab}\left(\lambdab\right)$ in gray and the
corresponding $\cC$-facets in thick black lines.}

\end{figure}

\vskip 2mm

\begin{remark}
It follows from its definition that the region
$\cR_{\fab}(\lambdab)$  associated to any given point is connected. We do not know whether the
same property is satisfied by the constancy region $\cC_{\fab}(\lambdab)$.
\end{remark}

\section{An algorithm to compute jumping numbers and multiplier
ideals}\label{sec:algorithm}

In \cite{ACAMDC13} we developed  a very simple algorithm to
construct sequentially the chain of multiplier ideals
$$\Oc_{X,O}\supseteq\J(\A^{\lambda_0})\varsupsetneq\J(\A^{\lambda_1})\varsupsetneq \J(\A^{\lambda_2})\varsupsetneq...\varsupsetneq\J(\A^{\lambda_i})\varsupsetneq...$$
associated to a single ideal $\fa \subseteq \cO_{X,O}$. The key
point is the fact proved in \cite[Theorem 3.5]{ACAMDC13} that, given
any $ {\lambda}'\in \bR_{\geqslant 0}$, the consecutive jumping
number is
$$\lambda= \min_i\left\{\frac{k_i+1+
e_i^{\lambda'}}{e_i}\right\},$$ where $D_{{\lambda'}}=\sum
e_i^{{\lambda'}} E_i$ is the antinef closure of $\left\lfloor
\lambda' F  - K_{\pi} \right\rfloor$. In particular, the algorithm
relies heavily on the unloading procedure described in Section
\ref{unloading}.

\vskip 2mm

The goal in this work is to adapt and extend the aforementioned
methods to compute the constancy regions of a tuple of ideals
$\fab:=\left(\fa_1,\dots,\fa_r \right)\subseteq \left(\cO_{X,O}\right)^r$ and describe the
corresponding mixed multiplier ideals. We start by fixing a common
log-resolution $\pi: X' \lra X$ of $\fab$. Then we have to consider
the relative canonical divisor $K_\pi = \sum_{i=1}^s k_j E_j $ and
the divisors $F_i$ in $X'$ such that $\fa_i\cdot \cO_{X'}=
\cO_{X'}(-F_i)$ decomposed as
$$ F_i=F_i^{\rm exc}+F_i^{\rm aff}= \sum_{j=1}^s e_{i,j} E_j + \sum_{j=s+1}^t e_{i,j} E_j $$
in terms of its exceptional and affine support.

\vskip 2mm

\newpage

As in the case treated in \cite{ACAMDC13}, the key point of our
method is how to compare the complete ideals defined by an antinef
and a non-antinef divisor. First we recall the following result.

\vskip 2mm

\begin{proposition}\cite[Corollary
3.4]{ACAMDC13}\label{semicont} Let $D_1,D_2$ be two divisors in $X'$
such that $D_1\leqslant D_2$. Then:

\begin{itemize}

\item[i)] $\pi_* \Oc_{X'}(-D_1)= \pi_* \Oc_{X'}(-D_2)$ if and only
if $\widetilde{D_1}\geqslant D_2$.

\item[ii)] $\pi_* \Oc_{X'}(-D_1) \varsupsetneq \pi_* \Oc_{X'}(-D_2)$ if and only if
$v_i(\widetilde{D_1}) < v_i(D_2)$ for some $E_i$.
\end{itemize}

\end{proposition}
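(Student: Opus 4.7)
The plan is to reduce everything to the bijective correspondence between (integral) antinef divisors on $X'$ and complete ideals in $\cO_{X,O}$ recalled from Lipman. The crucial preliminary observation I would establish is that for any divisor $D$ we have
$$\pi_*\cO_{X'}(-D)=\pi_*\cO_{X'}(-\widetilde{D}).$$
Indeed, by definition the stalk at $O$ only depends on $\lceil D\rceil$, and the unloading procedure produces $\widetilde{D}$ by increasing coefficients on a set $\Theta$ of exceptional components where the antinef inequality fails; one checks directly that the extra conditions imposed on $I_{\widetilde{D}}$ over those of $I_{\lceil D\rceil}$ are automatically satisfied by any element of $I_{\lceil D\rceil}$. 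Hence
$$\pi_*\cO_{X'}(-D_1)=\pi_*\cO_{X'}(-D_2)\iff \widetilde{D_1}=\widetilde{D_2},\qquad \pi_*\cO_{X'}(-D_1)\supsetneq\pi_*\cO_{X'}(-D_2)\iff \widetilde{D_1}<\widetilde{D_2},$$
by the bijectivity of Lipman's correspondence on antinef divisors.

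For part (i), the forward implication is trivial: if $\widetilde{D_1}=\widetilde{D_2}$ then $\widetilde{D_1}=\widetilde{D_2}\geqslant D_2$. For the converse, assume $\widetilde{D_1}\geqslant D_2$. Since $\widetilde{D_1}$ is an integral antinef divisor dominating $D_2$, the minimality characterization of the antinef closure yields $\widetilde{D_1}\geqslant \widetilde{D_2}$. In the reverse direction, from $D_1\leqslant D_2$ and the antinefness of $\widetilde{D_2}$ (which dominates $D_2\geqslant D_1$) the same minimality gives $\widetilde{D_1}\leqslant\widetilde{D_2}$. Together these force $\widetilde{D_1}=\widetilde{D_2}$, which by the correspondence is equivalent to equality of the ideals.

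Part (ii) then follows immediately from (i). The hypothesis $D_1\leqslant D_2$ combined with the order-preservation of the antinef closure shown above gives $\widetilde{D_1}\leqslant\widetilde{D_2}$, so the containment $\pi_*\cO_{X'}(-D_1)\supseteq \pi_*\cO_{X'}(-D_2)$ always holds. It is strict precisely when $\widetilde{D_1}\neq\widetilde{D_2}$, which by (i) is equivalent to $\widetilde{D_1}\not\geqslant D_2$, i.e.\ to the existence of some exceptional or affine prime $E_i$ with $v_i(\widetilde{D_1})<v_i(D_2)$.

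The only step that requires care, and is the main technical obstacle, is the preliminary observation $\pi_*\cO_{X'}(-D)=\pi_*\cO_{X'}(-\widetilde{D})$; all the rest is bookkeeping on the partial order. I would justify this by following the unloading step by step: when we pass from $\lceil D\rceil$ to $\lceil D\rceil+\sum_{E_i\in\Theta}n_iE_i$, the extra inequality $v_i(f)\geqslant \lceil d_i\rceil+n_i$ imposed on $f\in I_{\lceil D\rceil}$ is a consequence of the inequalities already in force, because $f\in I_{\lceil D\rceil}$ defines an effective divisor that must intersect $E_i$ non-negatively and thus absorbs the excess $\rho_i$; iterating until stabilization yields $\widetilde{D}$ and preserves the ideal at every step.
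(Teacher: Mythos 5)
Your proof is correct, and it is the natural argument underlying the cited result (the paper itself does not prove Proposition \ref{semicont}; it imports it verbatim from \cite[Corollary 3.4]{ACAMDC13}). The route you take — reduce everything to the identity $\pi_*\cO_{X'}(-D)=\pi_*\cO_{X'}(-\widetilde{D})$, then invoke Lipman's order-preserving bijection between integral antinef divisors and complete ideals, and finally exploit the minimality property of the antinef closure to compare $\widetilde{D_1}$, $\widetilde{D_2}$ and $D_2$ — is precisely the machinery the paper recalls in Section 2.1 for this purpose, so the approach matches. One small point worth tightening in your sketch of the key preliminary identity: when you say the divisor of $f\in I_{\lceil D\rceil}$ ``absorbs the excess,'' the precise mechanism is that $\operatorname{div}_{X'}(f)\cdot E_i=0$ (total transforms are numerically trivial on exceptional curves), together with $E_j\cdot E_i\geqslant 0$ for $j\neq i$ and $E_i^2<0$, which forces $v_i(f)\geqslant \lceil d_i\rceil+\lceil \rho_i/E_i^2\rceil$; iterating this and noting termination (negative definiteness of the intersection form bounds the process) gives $I_{\lceil D\rceil}=I_{\widetilde D}$. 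With that filled in, the rest of your argument is clean bookkeeping on the partial order, exactly as you say.
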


Then we get the following generalization of \cite[Corollary
3.4]{ACAMDC13} to the setting of mixed multiplier ideals.

\vskip 2mm

\begin{corollary} \label{cor:semicontMMI}
Let $\fab:=\left(\fa_1,\dots,\fa_r \right)\subseteq (\cO_{X,O})^r$ be a tuple
of ideals and $\lambdab, \lambdapb\in\bR^r_{\geqslant 0}$.
Let
$D_{\lambdab}=\sum e^{\lambdab}_j E_j$ be the antinef closure of
$\left\lfloor \lambda_1 F_1 + \cdots + \lambda_r F_r  - K_{\pi}
\right\rfloor$. Then:

\begin{itemize}

\item[] $\lambdapb \in \cR_{\fab}(\lambdab) $
if and only if $\lfloor \lambda'_1 e_{1,j} + \cdots +\lambda'_r
e_{r,j} - k_j \rfloor \leqslant e^{\lambdab}_j$ for all $E_j$,

\end{itemize}

\end{corollary}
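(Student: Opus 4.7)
\bigskip

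\noindent \textbf{Proof proposal.} The plan is to translate the containment of mixed multiplier ideals into a pointwise inequality of divisors via Lipman's correspondence between complete ideals and antinef divisors, and then clean up by comparing with the antinef closure.

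First I would rewrite both sides of the claim in terms of the divisors involved. By definition of the region, $\lambdapb \in \cR_{\fab}(\lambdab)$ is equivalent to $\J(\fab^{\lambdapb}) \supseteq \J(\fab^{\lambdab})$. From the definition of mixed multiplier ideals,
\[
\J(\fab^{\lambdapb}) = \pi_*\Oc_{X'}(-D'), \qquad D' := \bigl\lfloor \lambda'_1 F_1 + \cdots + \lambda'_r F_r - K_\pi \bigr\rfloor,
\]
and similarly $\J(\fab^{\lambdab}) = \pi_*\Oc_{X'}(-D_{\lambdab})$, using that the antinef closure does not alter the associated ideal and that $D_{\lambdab}$ is antinef by construction.

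Next I invoke Lipman's one-to-one correspondence between complete ideals and antinef divisors, recalled in \S\ref{unloading}: inclusion of two complete ideals corresponds to the opposite inequality between their (antinef) defining divisors. Letting $\widetilde{D'}$ denote the antinef closure of $D'$, we have $\J(\fab^{\lambdapb}) = \pi_*\Oc_{X'}(-\widetilde{D'})$, and since $D_{\lambdab}$ is already antinef, the containment $\J(\fab^{\lambdapb}) \supseteq \J(\fab^{\lambdab})$ is equivalent to $\widetilde{D'} \leqslant D_{\lambdab}$.

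The remaining step is to show that $\widetilde{D'} \leqslant D_{\lambdab}$ is equivalent to the \emph{non-closed} inequality $D' \leqslant D_{\lambdab}$. One direction is immediate: $D' \leqslant \widetilde{D'} \leqslant D_{\lambdab}$. For the converse, if $D' \leqslant D_{\lambdab}$ then $D_{\lambdab}$ is an antinef divisor dominating $D'$, and by the minimality characterization of the antinef closure we conclude $\widetilde{D'} \leqslant D_{\lambdab}$. Expanding $D' \leqslant D_{\lambdab}$ componentwise using $F_i = \sum_j e_{i,j} E_j$ and $K_\pi = \sum_j k_j E_j$ gives exactly the stated inequalities $\lfloor \lambda'_1 e_{1,j} + \cdots + \lambda'_r e_{r,j} - k_j \rfloor \leqslant e_j^{\lambdab}$ for every $E_j$.

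The only place that requires care is the equivalence between $\widetilde{D'} \leqslant D_{\lambdab}$ and $D' \leqslant D_{\lambdab}$, since a priori $D'$ need not be antinef; this is precisely where one exploits that $D_{\lambdab}$ \emph{is} antinef and that the unloading procedure described in \S\ref{unloading} produces the minimal antinef divisor above $D'$. The rest is bookkeeping with the definition of $D'$ and $D_{\lambdab}$. Note that Proposition \ref{semicont} could also be used in place of the minimality argument, by passing through the common lower bound $\min(D', D_{\lambdab})$, but the minimality route is more direct.
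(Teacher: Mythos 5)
Your argument is correct and is essentially the derivation the paper intends: the corollary is stated there as an immediate consequence of Lipman's correspondence and Proposition \ref{semicont}, and your route through $\J(\fab^{\lambdapb})=\pi_*\Oc_{X'}(-\widetilde{D'})$, the antinef--antinef inclusion criterion, and the minimality of the antinef closure over the antinef divisor $D_{\lambdab}$ is exactly the standard filling-in of that step. No gaps.
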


\vskip 2mm

With the technical tools stated above we are ready for the main
result of this section. Namely, we provide a formula to compute the
region associated to any given point that is a generalization of
\cite[Theorem 3.5]{ACAMDC13} in the context of mixed multiplier
ideals.

\begin{theorem} \label{thm:region}
Let $\fab:=\left(\fa_1,\dots,\fa_r \right)\subseteq \left(\Oc_{X,O}\right)^r$ be a tuple of
ideals and let $D_{{\lambdab}}=\sum e_j^{{\lambdab}} E_j$ be the
antinef closure of $\left\lfloor \lambda_1 F_1 + \cdots +
\lambda_rF_r  - K_{\pi} \right\rfloor$ for a given $ {\lambdab}\in
\bR^r_{\geqslant 0}$. Then the region of $\lambdab$ is the rational
convex polytope determined by the inequalities
\[e_{1,j} z_1 + \cdots +  e_{r,j} z_r < k_j + 1 + e_j^{{\lambdab}}\,,\]
corresponding to either rupture or dicritical divisors $E_j$.

\end{theorem}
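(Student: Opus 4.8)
The plan is to deduce Theorem~\ref{thm:region} from Corollary~\ref{cor:semicontMMI} by showing that the infinitely many inequalities $\lfloor \lambda'_1 e_{1,j} + \cdots + \lambda'_r e_{r,j} - k_j \rfloor \leqslant e_j^{\lambdab}$, ranging over \emph{all} exceptional components $E_j$, cut out exactly the same region of $\bR^r_{\geqslant 0}$ as the finitely many strict inequalities $e_{1,j} z_1 + \cdots + e_{r,j} z_r < k_j + 1 + e_j^{\lambdab}$ ranging over the rupture and dicritical components. First I would observe that the floor inequality $\lfloor \sum_i \lambda'_i e_{i,j} - k_j \rfloor \leqslant e_j^{\lambdab}$ is equivalent, for integer $e_j^{\lambdab}$, to the strict inequality $\sum_i \lambda'_i e_{i,j} - k_j < e_j^{\lambdab} + 1$, i.e.\ to $e_{1,j}z_1 + \cdots + e_{r,j}z_r < k_j + 1 + e_j^{\lambdab}$ evaluated at $\lambdapb$. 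So Corollary~\ref{cor:semicontMMI} already says that $\cR_{\fab}(\lambdab)$ is the polytope defined by \emph{all} these inequalities; the entire content of the theorem is that the inequalities attached to non-rupture, non-dicritical components are redundant.

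The heart of the argument is therefore the redundancy claim, and this is where I expect the main work to lie. The divisor $D_{\lambdab}$ is antinef, so $-D_{\lambdab}\cdot E_j \geqslant 0$ for every exceptional $E_j$; I would rewrite this as $\sum_{E_k} e_k^{\lambdab}(E_k\cdot E_j) \leqslant 0$, which at a component $E_j$ that is neither rupture nor dicritical (so $E_j^2 = -1$ or $-2$ and $E_j$ meets at most two other components, with excess $\rho_{i,j}=0$ for all $i$) gives a linear relation forcing $e_j^{\lambdab}$ to be a kind of ``average'' or ``convex-type'' combination of the $e_k^{\lambdab}$ for the one or two neighbors $E_k$ of $E_j$. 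Dually, the divisor $G := \sum_i z_i F_i^{\rm exc} - K_\pi$ (whose floor we are taking) satisfies $-F_i\cdot E_j = \rho_{i,j} = 0$ and $(K_\pi + E_j)\cdot E_j = -2$ at such a component, which translates into the coefficients $g_j(z) := \sum_i z_i e_{i,j} - k_j$ satisfying the \emph{same} linear recursion along strings of $(-2)$-curves and at $(-1)$-curves between two branches. Concretely, along a chain $E_{j_0}, E_{j_1}, \dots, E_{j_m}$ of such components running between two rupture/dicritical/affine-adjacent ends, one gets $g_{j_\ell}(z)$ interpolating linearly (in the $(-2)$ case) between the values at the two ends, and similarly for $e^{\lambdab}_{j_\ell}$; then the inequality $g_{j_\ell}(z) < e^{\lambdab}_{j_\ell}+1$ at an interior point $E_{j_\ell}$ of the chain follows from the corresponding inequalities at the two endpoints by convexity/linearity, since the ``$+1$'' slack is preserved under the affine interpolation. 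This is exactly the mechanism of the unloading procedure, and I would cite or mirror the computation in \cite[Proof of Theorem~3.5]{ACAMDC13}, where the analogous one-parameter statement is proved; here one simply carries the parameter $z = (z_1,\dots,z_r)$ through the same linear algebra, noting that everything is affine-linear in $z$ so no new phenomena appear.

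A couple of bookkeeping points need care. One must treat the ``ends'' of a string of $(-2)$-curves correctly: an end may be a rupture component, a dicritical component, or a component meeting the affine part $\sum_i F_i^{\rm aff}$; in the last case $e^{\lambdab}_j$ is still controlled because $D_{\lambda}^{\rm aff}\cdot E_j \geqslant 0$ contributes nonnegatively, and the inequality at that end is among those we keep (dicritical) or can be absorbed. One should also check the degenerate cases --- a string of length one, a component that is an end of no string, leaves of the tree that are themselves rupture or dicritical --- but these are covered by the same inequality manipulation. Finally, the claim that the resulting polytope is the \emph{minimal} one described by these inequalities is immediate since $\lambdab$ itself lies in it (indeed $e_{1,j}\lambda_1 + \cdots + e_{r,j}\lambda_r - k_j \leqslant e_j^{\lambdab} < e_j^{\lambdab}+1$ because $D_\lambda \geqslant \lfloor \sum_i \lambda_i F_i - K_\pi \rfloor$), so the polytope is nonempty and contains $\lambdab$, and by Corollary~\ref{cor:semicontMMI} it is precisely $\cR_{\fab}(\lambdab)$. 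The only genuinely delicate step is the redundancy argument for non-rupture, non-dicritical divisors, and for that I would lean directly on the unloading analysis already established in \cite{ACAMDC13}.
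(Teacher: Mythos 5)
Your first reduction is exactly the paper's: via Corollary~\ref{cor:semicontMMI}, membership in $\cR_{\fab}(\lambdab)$ is equivalent to the strict inequalities $e_{1,j}z_1+\cdots+e_{r,j}z_r < k_j+1+e_j^{\lambdab}$ over \emph{all} components, and the whole content is the redundancy of the non-rupture, non-dicritical ones. Where you diverge is in how that redundancy is established, and this is where your proposal has a genuine gap. The paper does not prove it by unloading-type numerics at all: it invokes the minimal jumping divisor machinery of Section~\ref{jumping_divisor}, specifically Theorem~\ref{thm:geo_dual_graphMMI}, whose proof is cohomological (Proposition~\ref{prop:Num_conditionMMI} uses the pushed-forward exact sequence and $H^0(E_i,\cdot)\neq 0$, together with the unicity of the minimal jumping divisor). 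Likewise, the result of \cite{ACAMDC13} you propose to ``lean on'' does not contain the statement you need: Theorem~3.5 there expresses the next jumping number as a minimum over \emph{all} components, and in that paper too the restriction to rupture or dicritical divisors comes from the jumping-divisor/cohomological section, not from the unloading computation. So the citation cannot carry the delicate step.

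Your own sketch of that step is also not yet a proof. The coefficients $e_j^{\lambdab}$ do \emph{not} interpolate affinely along a chain: antinefness only gives the one-sided inequality $e_{j_{\ell-1}}^{\lambdab}+e_{j_{\ell+1}}^{\lambdab}\leqslant a_\ell\, e_{j_\ell}^{\lambdab}$ (with $a_\ell=-E_{j_\ell}^2$), and the linear forms $\sum_i z_ie_{i,j}-k_j-1$ satisfy an exact interpolation only when $a_\ell=2$; for other self-intersections (in particular valence-two $(-1)$-curves, which a non-minimal log-resolution may contain) the naive ``convexity preserves the $+1$ slack'' argument breaks down, and one needs a genuine discrete maximum principle that uses negative definiteness of the intersection matrix --- e.g.\ setting $\psi_j=\sum_i z_ie_{i,j}-k_j-1-e_j^{\lambdab}$ one checks $\left(\sum_k\psi_k E_k\right)\cdot E_j\geqslant 2-\deg(E_j)\geqslant 0$ at every non-rupture, non-dicritical $E_j$, and then the $M$-matrix property of the intersection form (nonnegative off-diagonal entries, principal submatrices with entrywise nonnegative inverses) forces $\psi\leqslant 0$ on those components whenever $\psi<0$ on the rupture/dicritical ones; a further propagation-of-equality argument (using connectedness of the exceptional locus) is needed to get the \emph{strict} inequalities the region requires. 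You would also have to handle honestly the dead-end leaves and the components meeting only affine parts: a component crossed by the strict transform of a principal $\fa_i$ has zero excess, hence is not dicritical in the paper's sense, so your remark that such ends ``are among those we keep'' is not automatic. In short, the strategy (a purely numerical proof from antinefness, bypassing jumping divisors) is viable and genuinely different from the paper's, but as written the central redundancy argument is incomplete, and the reference you rely on does not supply it.
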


\vskip 2mm

In order to prove the second part of this result, we need to invoke some
results on {\it jumping divisors} that will be develop in Section 
\ref{jumping_divisor}.

\begin{proof}
It follows from Corollary \ref{cor:semicontMMI} that
 $\lambdapb$ is not in the region if and only if there exists $E_j$ such that
\[\lfloor \lambda_1' e_{1,j}+\cdots+\lambda_r' e_{r,j} - k_j \rfloor > e^{\lambdab}_j.\]
This inequality is equivalent to, $-k_j+ \lambda_1' e_{1,j}+\cdots+\lambda_r' e_{r,j} \geqslant e_j^{{\lambdab}}+1$
and therefore to $\lambda_1' e_{1,j}+\cdots+\lambda_r' e_{r,j}\geqslant
k_j+1+e_j^{\lambdab}$.

\vskip 2mm

To finish the proof, we have to prove that we only need to consider the rupture
or dicritical divisors. Let $H_j:  e_{1,j} z_1+ \cdots +  e_{r,j} z_r = k_j + 1 +
e_j^{{\lambdab}} $ be the hyperplane associated to the divisor $E_j$ considered above.
Then, among all the exceptional divisors $E_i$ such that $e_{1,i} z_1+ \cdots +  e_{r,i} z_r = k_i + 1 +
e_j^{{\lambdab}} $ gives the same hyperplane $H$, we may find a rupture or dicritical divisor
by Theorem \ref{thm:geo_dual_graphMMI}.
\end{proof}

\begin{remark}

When $X$ has a rational singularity at $O$, we may have a strict
inclusion $\Oc_{X,O}\varsupsetneq\J(\fab^{\lambdab_0})$ for
$\lambdab_0=(0,\dots,0)$. The above result for this case gives a
mild generalization of the well-known formula for the region  $\cR_{\fab}(\lambdab_0)$
in the smooth case (see \cite{LM11} where this region is denoted LCT-polytope). Namely, it is the rational
convex polytope determined by the inequalities
\[e_{1,j} z_1 + \cdots +  e_{r,j} z_r < k_j + 1 + e_j^{{\lambdab_0}}\,,\]
corresponding to either rupture or dicritical divisors $E_j$.

\end{remark}

\begin{remark}
When $X$ is smooth, Cassou-Nogu\`es and Libgober \cite{CNL11,CNL14} studied the analogous
notions of ideals and faces of
quasi-adjunction of a tuple of irreducible plane curves. In particular, they obtained a formula for
the region associated to any given germ $\phi \in \cO_{X,O}$ that resembles
the one given in Theorem \ref{thm:region} (see \cite[Proposition 2.2]{CNL11} and \cite[Theorem 4.1]{CNL14}).
\end{remark}

\begin{corollary}
Let $\fab:=\left(\fa_1,\dots,\fa_r \right)\subseteq (\cO_{X,O})^r$ be a tuple
of ideals. Then the region $\cR_{\fab}(\lambdab)$ is bounded for any point $\lambdab \in \bR^r_{\geqslant 0}$.

\end{corollary}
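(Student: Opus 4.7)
The plan is to read off boundedness directly from the explicit polytopal description of $\cR_{\fab}(\lambdab)$. By Theorem \ref{thm:region}, the region is cut out of $\bR^r_{\geqslant 0}$ by finitely many linear inequalities
\[e_{1,j}\,z_1+\cdots+e_{r,j}\,z_r \;<\; k_j+1+e_j^{\lambdab},\]
all of whose coefficients $e_{i,j}$ are non-negative integers (since each $F_i$ is effective), together with the constraints $z_i\geqslant 0$. Because both the coefficients and the variables are non-negative, the recession cone of such a polyhedron consists of the vectors $v\in\bR^r_{\geqslant 0}$ satisfying $\sum_i e_{i,j}v_i=0$ for every listed $j$, and this forces $v_i=0$ as soon as the $i$-th coordinate appears with positive coefficient in at least one defining inequality. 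Hence boundedness reduces to showing that every coordinate $z_i$ appears with some positive coefficient.

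To verify this I would argue through Corollary \ref{cor:semicontMMI}, which has the advantage of quantifying over \emph{all} $E_j$ and so avoids any subtlety about the effective list of rupture or dicritical divisors used in Theorem \ref{thm:region}. For each $i\in\{1,\dots,r\}$, the ideal $\fa_i$ is non-zero, so the effective divisor $F_i=\sum_j e_{i,j}E_j$ is itself non-zero; pick any $j_i$ with $e_{i,j_i}>0$. Then for any $\lambdapb\in\cR_{\fab}(\lambdab)$, Corollary \ref{cor:semicontMMI} applied at $E_{j_i}$ gives
\[\lambda'_i\,e_{i,j_i}\;\leqslant\;\sum_{k=1}^r \lambda'_k\,e_{k,j_i}\;<\;k_{j_i}+1+e_{j_i}^{\lambdab},\]
yielding the explicit bound $\lambda'_i<(k_{j_i}+1+e_{j_i}^{\lambdab})/e_{i,j_i}$. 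Carrying this out for every $i$ produces a coordinate-wise upper bound on the points of $\cR_{\fab}(\lambdab)$, so the region is bounded.

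The only mild obstacle is ensuring that a divisor $E_{j_i}$ with $e_{i,j_i}>0$ is actually part of the bounding data; Corollary \ref{cor:semicontMMI} handles this automatically by ranging over every $E_j$. If one prefers to argue purely within Theorem \ref{thm:region}, one must additionally locate a rupture or dicritical divisor $E_j$ with $e_{i,j}>0$ for each $i$, which holds because $\fa_i$ always possesses at least one dicritical component (given by a Rees valuation of $\fa_i$), where the identity $\rho_{i,j}=-F_i\cdot E_j>0$ combined with the negative-definiteness of the exceptional intersection matrix forces $e_{i,j}>0$.
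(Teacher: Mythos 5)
Your argument is correct, and it supplies precisely the step that the paper leaves implicit: the corollary is stated there with no proof, as an immediate consequence of Theorem \ref{thm:region} (``the region is a rational convex polytope cut out by finitely many inequalities''), which by itself does not yet give boundedness unless one checks that every coordinate $z_i$ occurs with a strictly positive coefficient in at least one bounding inequality. Your route through Corollary \ref{cor:semicontMMI} is a good way to do this: since that corollary quantifies over \emph{all} components $E_j$ of $D_{\lambdab}$ (exceptional and affine alike), it suffices to pick any $E_{j_i}$ with $e_{i,j_i}>0$, and the coordinate-wise bound $\lambda_i'<(k_{j_i}+1+e_{j_i}^{\lambdab})/e_{i,j_i}$ follows from non-negativity of the $\lambda_k'$ and $e_{k,j}$. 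This deliberately sidesteps the delicate point in your fallback argument: if some $\fa_i$ is not $\fM$-primary (e.g.\ principal, defining a curve germ), then $F_i\cdot E_j=0$ for every exceptional $E_j$, so $\fa_i$ need have no \emph{exceptional} dicritical component; its Rees valuations are then carried by the affine components of $F_i$, and the reduced list in Theorem \ref{thm:region} bounds the $i$-th direction only if those non-exceptional components are counted among the ``dicritical'' ones. Your primary argument is immune to this, which is exactly the right choice. Two minor remarks: the recession-cone paragraph is redundant once you produce the explicit coordinate-wise bounds; and ``$\fa_i$ non-zero'' should really be ``$\fa_i$ proper'' ($\fa_i\subseteq\fM$, so that $F_i\neq 0$) --- if some $\fa_i=\cO_{X,O}$ the region is genuinely unbounded in that direction, so this properness is an implicit hypothesis of the corollary itself rather than a defect of your proof.
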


This property enables us to give a recursive way to compute the constancy region
 $\cC_{\fab}(\lambdab)$ from the finitely many constancy regions satisfying
 $\cC_{\fab}(\lambdapb) \preccurlyeq \cC_{\fab}(\lambdab)$.

\begin{corollary}\label{formula_constancy}
 Let $\fab:=\left(\fa_1,\dots,\fa_r \right)\subseteq (\cO_{X,O})^r$ be a tuple
of ideals. Given $\lambdab \in \bR^r_{\geqslant 0}$, there exists finitely many 
points $\lambdab_1,\dots,\lambdab_k \in \bR^r_{\geqslant 0}$ such that
\begin{equation} \label{eq:const_region3}
\cC_{\fab}(\lambdab)=\cR_{\fab}(\lambdab) \backslash
\left(\cR_{\fab}(\lambdab_1)\cup \cdots \cup \cR_{\fab}(\lambdab_{k})\right)=\cR_{\fab}(\lambdab) \backslash
\left(\cC_{\fab}(\lambdab_1)\cup \cdots \cup \cC_{\fab}(\lambdab_{k})\right).
\end{equation}
In particular, $\cC_{\fab}(\lambdab_1), \dots, \cC_{\fab}(\lambdab_{k})$
are all the constancy regions that are strictly smaller than $\cC_{\fab}(\lambdab)$ using
the partial order $\preccurlyeq$.
\end{corollary}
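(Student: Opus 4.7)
The plan is to exploit the partial order $\preccurlyeq$ on constancy regions together with the boundedness of $\cR_{\fab}(\lambdab)$ established in the previous corollary. First I would recall the observation made earlier in the text that any bounded subset of $\bR^r_{\geqslant 0}$ meets only finitely many constancy regions, which forces $\cR_{\fab}(\lambdab)$ to contain only finitely many of them. Any constancy region meeting $\cR_{\fab}(\lambdab)$ is in fact entirely contained in it: if $\mupb' \in \cC_{\fab}(\mupb) \cap \cR_{\fab}(\lambdab)$ then $\J(\fab^{\mupb}) = \J(\fab^{\mupb'}) \supseteq \J(\fab^{\lambdab})$, so $\mupb \in \cR_{\fab}(\lambdab)$ and the whole class follows. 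By the equivalence of this inclusion with $\cC_{\fab}(\mupb) \preccurlyeq \cC_{\fab}(\lambdab)$ recorded in the text, the constancy regions $\preccurlyeq \cC_{\fab}(\lambdab)$ are precisely those contained in $\cR_{\fab}(\lambdab)$, hence finitely many. I would then pick representatives $\lambdab_1, \dots, \lambdab_k$ of those that are strictly smaller than $\cC_{\fab}(\lambdab)$.

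For the first equality in \eqref{eq:const_region3}, any $\lambdapb \in \cR_{\fab}(\lambdab)$ satisfies $\cC_{\fab}(\lambdapb) \preccurlyeq \cC_{\fab}(\lambdab)$; so either $\lambdapb \in \cC_{\fab}(\lambdab)$, or else $\cC_{\fab}(\lambdapb) = \cC_{\fab}(\lambdab_i)$ for some $i$, in which case $\lambdapb \in \cC_{\fab}(\lambdab_i) \subseteq \cR_{\fab}(\lambdab_i)$. Conversely, any $\lambdapb \in \cR_{\fab}(\lambdab_i)$ satisfies $\J(\fab^{\lambdapb}) \supseteq \J(\fab^{\lambdab_i}) \varsupsetneq \J(\fab^{\lambdab})$, and hence cannot lie in $\cC_{\fab}(\lambdab)$.

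For the second equality, the inclusion $\cR_{\fab}(\lambdab) \setminus \bigcup_i \cR_{\fab}(\lambdab_i) \subseteq \cR_{\fab}(\lambdab) \setminus \bigcup_i \cC_{\fab}(\lambdab_i)$ is immediate since $\cC_{\fab}(\lambdab_i) \subseteq \cR_{\fab}(\lambdab_i)$. For the reverse, if $\lambdapb \in \cR_{\fab}(\lambdab) \cap \cR_{\fab}(\lambdab_i)$ for some $i$, the two inclusions combine to force $\cC_{\fab}(\lambdapb) \prec \cC_{\fab}(\lambdab)$, so by our choice of representatives $\cC_{\fab}(\lambdapb) = \cC_{\fab}(\lambdab_j)$ for some $j$ and $\lambdapb \in \cC_{\fab}(\lambdab_j)$. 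The ``in particular'' clause is built into the construction of the $\lambdab_i$.

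The only delicate step is the first: one must identify (a) constancy regions meeting $\cR_{\fab}(\lambdab)$, (b) those contained in it, and (c) those $\preccurlyeq \cC_{\fab}(\lambdab)$ as the same finite family, before the set-theoretic manipulations become meaningful. Once that is in place, the two equalities fall out by direct unwinding of the definitions of region, constancy region, and the ordering $\preccurlyeq$.
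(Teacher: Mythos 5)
Your proof is correct. The paper states this corollary without proof (it follows the boundedness corollary as an immediate consequence), and your argument — that the finitely many constancy regions contained in $\cR_{\fab}(\lambdab)$ are exactly those $\preccurlyeq \cC_{\fab}(\lambdab)$, followed by the set-theoretic unwinding of both equalities — is precisely the intended straightforward derivation from Definition~\ref{region}, the partial order $\preccurlyeq$, and the boundedness of $\cR_{\fab}(\lambdab)$.
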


\begin{remark}
 To obtain a simpler expression in the first equation of (\ref{eq:const_region3}) we may choose
$\lambdab_1, \dots , \lambdab_s \in \bR^r_{\geqslant 0}$ such that $\cC_{\fab}(\lambdab_1), \dots ,
\cC_{\fab}(\lambdab_s)$ are the maximal elements among those constancy regions which are strictly smaller
than $\cC_{\fab}(\lambdab)$ using the partial order $\preccurlyeq$. Then
\begin{equation}  \label{eq:const_region4}
\cC_{\fab}(\lambdab)=\cR_{\fab}(\lambdab) \backslash
\left(\cR_{\fab}(\lambdab_1)\cup \cdots \cup \cR_{\fab}(\lambdab_{s})\right).
\end{equation}

\end{remark}

\vskip 2mm

Theorem \ref{thm:region} is one of the key ingredients for the algorithm
that we will present in Section \ref{ssec:algorithm}. The other key ingredient
comes from a careful study of the $\cC$-facets of the components of a
constancy regions that will show their subtlety.

\vskip 2mm

For simplicity, due to the fact that for a fixed jumping point
$\lambdab$, for $\varepsilon>0$ sufficiently
small any ${\lambdapb \in  \{\lambdab - \R_{\geqslant0}^r\}
\cap B_{\varepsilon}(\lambdab)}$  defines the same mixed multiplier ideal, we will denote this
mixed multiplier ideal as the one associated to
$(1-\varepsilon)\lambdab$ for $\varepsilon>0$ sufficiently small.

\vskip 2mm

We start with the following well-known fact.

\begin{lemma}
 Let $\fab:=\left(\fa_1,\dots,\fa_r \right)\subseteq (\cO_{X,O})^r$ be a tuple
of ideals and $\lambdab \in \bR^r_{\geqslant 0}$ be a point.

\begin{enumerate}
 \item The interior of a $\cC$-facet, as a subspace of its supporting hyperplane, is non-empty.

 \item Any constancy region $\cC_{\fab}(\lambdab)$ different from the constancy region associated to the origin,
 has non-empty intersection with the interior of some $\cC$-facets.

 \item Any interior point $\lambdapb$ of a $\cC$-facet of $\cC_{\fab}(\lambdab)$ satisfies
  $\J(\fab^{(1-\varepsilon)\lambdapb}) = \J(\fab^{\lambdab})$
\end{enumerate}

\end{lemma}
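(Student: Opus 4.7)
The plan is to treat the three parts in order, combining the polyhedral description of $\cR_{\fab}(\lambdab)$ provided by Theorem~\ref{thm:region}, the recursive formula of Corollary~\ref{formula_constancy} and the local behavior of mixed multiplier ideals given by the negative orthant and small-neighborhood properties.

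For (1), I would fix a $\cC$-facet $F = \partial C \cap H$ of $\cC_{\fab}(\lambdab)$, where $C$ is a connected component of $\cC_{\fab}(\lambdab)$ and $H$ is a supporting hyperplane of $\cR_{\fab}(\lambdab)$. By Corollary~\ref{formula_constancy}, $C$ is obtained by removing finitely many rational convex polytopes from $\cR_{\fab}(\lambdab)$, so only finitely many hyperplanes appear in the description of $\partial C$. I would then pick a point $\lambdapb \in F$ away from every intersection of $H$ with any of these other hyperplanes; for $\varepsilon > 0$ small enough, $H$ is the only such hyperplane meeting $B_{\varepsilon}(\lambdapb)$, so $B_{\varepsilon}(\lambdapb) \setminus H$ decomposes into two open half-balls, with the one lying on the side of $\cR_{\fab}(\lambdab)$ entirely contained in $C$. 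Hence every point of $H \cap B_{\varepsilon}(\lambdapb)$ lies in $\partial C$, and therefore in $F$, producing an $(r-1)$-dimensional neighborhood of $\lambdapb$ inside $F$.

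For (2), I would apply Corollary~\ref{formula_constancy} to write
\[\cC_{\fab}(\lambdab) = \cR_{\fab}(\lambdab) \setminus \left(\cR_{\fab}(\lambdab_1) \cup \cdots \cup \cR_{\fab}(\lambdab_s)\right),\]
with $\cC_{\fab}(\lambdab_1), \ldots, \cC_{\fab}(\lambdab_s)$ the maximal constancy regions strictly smaller than $\cC_{\fab}(\lambdab)$ in the partial order $\preccurlyeq$. The hypothesis $\cC_{\fab}(\lambdab) \neq \cC_{\fab}(\lambdab_0)$ forces $s \geqslant 1$. I would then choose a generic point $\lambdapb$ on the boundary of some $\cR_{\fab}(\lambdab_i)$ that lies inside $\cR_{\fab}(\lambdab)$, on exactly one supporting hyperplane of $\cR_{\fab}(\lambdab_i)$ and outside every $\cR_{\fab}(\lambdab_j)$ for $j \neq i$. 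The same local analysis as in (1), applied this time to $\cC_{\fab}(\lambdab_i)$, shows that $\lambdapb$ is an interior point of a $\cC$-facet of $\cC_{\fab}(\lambdab_i)$; by construction $\lambdapb$ belongs to $\cC_{\fab}(\lambdab)$.

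Part (3) will follow almost directly from the negative orthant property. If $\lambdapb$ is an interior point of a $\cC$-facet $F = \partial C \cap H$ of $\cC_{\fab}(\lambdab)$, then since the coefficients of $H$ are non-negative, the set $(\lambdapb - \R^r_{\geqslant 0}) \cap B_{\varepsilon}(\lambdapb) \setminus \{\lambdapb\}$ strictly satisfies the inequality defining $H$ and therefore sits inside $\cR_{\fab}(\lambdab)$. The negative orthant property ensures a constant multiplier ideal $\J(\fab^{(1-\varepsilon)\lambdapb})$ on this set; the interior-point hypothesis together with the local picture from (1) guarantees that for $\varepsilon$ small enough these points lie in $C$, so this constant ideal equals $\J(\fab^{\lambdab})$. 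The main obstacle throughout is the genericity argument in (1), namely ensuring that the chosen point of $F$ avoids the finitely many auxiliary hyperplanes. This is controlled because $\cR_{\fab}(\lambdab)$ is a rational convex polytope and Corollary~\ref{formula_constancy} yields only finitely many smaller regions to avoid.
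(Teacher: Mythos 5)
Your proposal has a genuine gap, and it sits exactly at the step you flag as ``the main obstacle''. In part (1) you want to pick $\lambdapb\in F=\partial C\cap H$ avoiding every intersection $H\cap H'$ with the finitely many other hyperplanes of the arrangement. But the existence of such a point is not a consequence of finiteness: a priori the set $F$ could be entirely contained in the union of the $(r-2)$-dimensional sets $H\cap H'$, and in fact ``$F$ has nonempty interior in $H$'' is \emph{equivalent} to ``$F\not\subseteq\bigcup_{H'\neq H}(H\cap H')$'', so the selection of your generic point presupposes precisely the statement to be proved. The danger is real at the polyhedral level: since $\cC_{\fab}(\lambdab)$ is a region $\cR_{\fab}(\lambdab)$ minus finitely many smaller regions, a connected component could conceivably touch a supporting hyperplane $H$ of $\cR_{\fab}(\lambdab)$ only along a face of dimension $<r-1$ (for instance when the normal of $H$ is a nonnegative combination of the normal of another supporting hyperplane and of a facet normal of a removed region, with matching constants, the contact locus of $\overline{C}$ with $H$ can degenerate while $C$ stays full-dimensional). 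Excluding this degeneration is the actual content of the lemma; it requires using the specific structure of the regions (all defining forms have nonnegative coefficients, every region contains the origin, and --- this is the paper's stated key point --- near any point of the closure of a region the region contains full-dimensional open balls, so that points of $C$ near the facet can be thickened and pushed up to $H$). Your local half-ball analysis, which is correct \emph{once} a point of $F$ off the other hyperplanes is available, does not supply that point.

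Parts (2) and (3) inherit the same problem. In (2) you need a point of $\partial\cR_{\fab}(\lambdab_i)$ lying inside $\cC_{\fab}(\lambdab)$, on exactly one supporting hyperplane of $\cR_{\fab}(\lambdab_i)$ and outside all the other removed regions; the existence of such a point is again a genericity claim of the same kind and is not justified (the paper instead argues via the nonempty inner boundary $\cC_{\fab}(\lambdab)\setminus\cC_{\fab}(\lambdab)^{\circ}$, whose points are interior points of $\cC$-facets of strictly smaller constancy regions). In (3), the assertion that the points of $(\lambdapb-\bR^{r}_{\geqslant0})\cap B_{\varepsilon}(\lambdapb)$ lie in $\cR_{\fab}(\lambdab)$, and indeed in $C$, again invokes ``the local picture from (1)'': note in addition that moving into $\lambdapb-\bR^{r}_{\geqslant0}$ need not make the inequality of $H$ strict (the decreased coordinates may have zero coefficient), and other defining inequalities of $\cR_{\fab}(\lambdab)$, or the boundaries of removed regions, could be active at $\lambdapb$, so hypotheses beyond ``$\lambdapb$ is interior to $F$'' are being used tacitly. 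Until the non-degeneracy of $\cC$-facets is established by an argument that does not presuppose it, the proof is incomplete.
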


\begin{proof}
The key point in the proof of these three statements is that, for all $\varepsilon >0$, we have that
$B_{\varepsilon}(\lambdab) \cap \cR_{\fab}(\lambdab)$ contains an open ball $B_{\varepsilon}(\mub)$
for some $\mub \in \cR_{\fab}(\lambdab)$. To finish the proof of ii) we notice that the
inner boundary $\cC_{\fab}(\lambdab) \backslash \cC_{\fab}(\lambdab)^\circ$ provides the points
of $\cC_{\fab}(\lambdab)$ which are interior points of a $\cC$-facet of some other constancy region, which
is necessarily smaller than $\cC_{\fab}(\lambdab)$ using the partial order $\preccurlyeq$.
\end{proof}

The key result states that a $\cC$-facet cannot {\it be crossed} by any jumping wall.

\begin{proposition}\label{inside_point}

Let $\fab:=\left(\fa_1,\dots,\fa_r \right)\subseteq (\cO_{X,O})^r$ be a tuple
of ideals. Let $\lambdab$ and $\lambdapb$ be interior points of the same
$\cC$-facet of a constancy region. Then we have
$\J\left(\fab^{\lambdab}\right)=\J\left(\fab^{\lambdapb}\right)$.

\end{proposition}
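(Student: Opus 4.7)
Let $\lambdab$ and $\lambdapb$ be interior points of a common $\cC$-facet $C$ of some constancy region $\cC_{\fab}(\muppb)$. By definition $C$ lies on a supporting hyperplane of $\cR_{\fab}(\muppb)$, which by Theorem \ref{thm:region} has the form
\[H_j:\ e_{1,j}z_1 + \cdots + e_{r,j}z_r = k_j + 1 + e_j^{\muppb},\]
where $D_{\muppb} = \sum_i e_i^{\muppb} E_i$ is the antinef closure associated to $\muppb$. My plan is to show that the divisor $\lfloor \lambda_1 F_1 + \cdots + \lambda_r F_r - K_\pi \rfloor$ is actually \emph{the same} at $\lambdab$ and $\lambdapb$, which will force its antinef closure to coincide and hence (via Corollary \ref{cor:semicontMMI}, or equivalently the correspondence between antinef divisors and complete ideals) yield $\J(\fab^{\lambdab}) = \J(\fab^{\lambdapb})$.

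\textbf{Key steps.} First I would check the coefficient of $E_j$: since both points lie exactly on $H_j$, the value $\sum_k \lambda_k e_{k,j} - k_j$ equals the integer $e_j^{\muppb} + 1$ at both $\lambdab$ and $\lambdapb$, so the $E_j$-coefficient of $\lfloor\lambdab\cdot F - K_\pi\rfloor$ equals that of $\lfloor \lambdapb\cdot F - K_\pi\rfloor$. For any other exceptional or affine component $E_i$, the coefficient $\lfloor\sum_k \lambda_k e_{k,i} - k_i\rfloor$ only changes when $\lambdab$ crosses a hyperplane of the form
\[H_i^{(m)}:\ \sum_k z_k e_{k,i} = k_i + m, \qquad m \in \Z_{>0},\]
and every such integer-coefficient hyperplane is a candidate supporting hyperplane of some region $\cR_{\fab}(\nub)$. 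Because $\lambdab$ and $\lambdapb$ are \emph{interior} points of $C$, they cannot sit on any $H_i^{(m)}$ different from $H_j$—such a point would belong to the intersection $H_j \cap H_i^{(m)}$, which forms part of the boundary of the $\cC$-facet inside $H_j$. Hence the divisor $\lfloor \lambdab \cdot F - K_\pi\rfloor$ is locally constant on interior points of $C$.

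To propagate this local constancy from $\lambdab$ to $\lambdapb$, I would connect the two points by a path inside the interior of $C$ (subdividing by finitely many small open balls, using compactness of the segment and that only finitely many $H_i^{(m)}$ meet a bounded set), and observe that along the path, no hyperplane $H_i^{(m)}$ with $i \neq j$ is crossed. Therefore the floor divisor is globally the same at $\lambdab$ and $\lambdapb$, its antinef closure is the same, and the two mixed multiplier ideals coincide.

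\textbf{Main obstacle.} The delicate step is ensuring that $\lambdab$ and $\lambdapb$ can be joined by a path \emph{entirely inside} the interior of $C$, given that a $\cC$-facet is a priori not convex (it is the portion of a facet of $\cR_{\fab}(\muppb)$ lying on the boundary of a connected component of the possibly non-convex set $\cC_{\fab}(\muppb)$). This is where the jumping divisor machinery of Section \ref{jumping_divisor} will help: it identifies which exceptional divisors actually contribute to the $\cC$-facet and so pins down precisely which hyperplanes $H_i^{(m)}$ may partition the interior of $C$, allowing one to argue that interior points of a single $\cC$-facet are connected through the complement of those subordinate hyperplanes.
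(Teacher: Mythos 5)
Your argument hinges on the claim that the floor divisor $\lfloor \lambda_1 F_1+\cdots+\lambda_r F_r-K_\pi\rfloor$ is literally constant on the interior of a $\cC$-facet, justified by saying that an interior point cannot lie on any integer hyperplane $H_i^{(m)}$ other than the supporting hyperplane $H_j$, because $H_j\cap H_i^{(m)}$ "forms part of the boundary of the $\cC$-facet". That justification fails: only the very particular translates $H_i^{(m)}$ with $m=1+e_i^{\mub}$ (where $e_i^{\mub}$ is the coefficient of the antinef closure attached to the preceding constancy region) are supporting hyperplanes of a jumping wall; all other integer translates of the hyperplane of $E_i$ may perfectly well cut across the relative interior of the facet, because crossing them changes the floor coefficient at $E_i$ without changing the ideal (the unloading/antinef closure already dominates that coefficient — this is exactly the phenomenon, familiar already for a single ideal, that a candidate value for a non-rupture, non-dicritical $E_i$ need not be a jumping number). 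So the floor divisor is in general \emph{not} constant on the interior of a $\cC$-facet; what is constant is its antinef closure, and proving that is essentially the statement itself. Your path-connectivity argument inherits the same problem: even along a path inside the facet interior, such harmless hyperplanes can be crossed, and your proposal gives no argument that these crossings leave the ideal unchanged — the appeal to "the jumping divisor machinery" at the end is exactly the missing step, not a detail.

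For comparison, the paper avoids any claim about the floor divisor. Interior points $\lambdab,\lambdapb$ of the same $\cC$-facet are jumping points of one and the same prior ideal, $\J(\fab^{(1-\varepsilon)\lambdab})=\J(\fab^{(1-\varepsilon)\lambdapb})$, with antinef closure $D_{(1-\varepsilon)\lambdab}=D_{\mub}$ depending only on that constancy region; by Lemma \ref{lem:Constancy_facet} they also share the same minimal jumping divisor $G$, whose components are exactly the $E_j$ defining the supporting hyperplane of the facet. Proposition \ref{prop:jump2_MMI} then expresses each ideal as $\pi_{*}\Oc_{X'}(-D_{\mub}-G)$, an object that depends only on the facet and not on the chosen interior point, which gives the equality at once. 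If you want to rescue your approach, you would have to replace "the floor divisors agree" by "the antinef closures of the floor divisors agree" and prove, via Proposition \ref{semicont} or the unloading procedure, that crossing a non-wall hyperplane $H_i^{(m)}$ inside the facet does not change the antinef closure — which in substance reproduces the jumping-divisor argument.
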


Once again we need to use some results from  Section 
\ref{jumping_divisor} to prove this fact.

\begin{proof}
Let $H$ be the supporting hyperplane of the $\cC$-facet containing $\lambdab$ and $\lambdapb$.
Notice that both are jumping points coming from the same mixed multiplier ideal, i.e.,
${\J\left(\fab^{(1-\varepsilon)\lambdab}\right)= \J\left(\fab^{(1-\varepsilon)\lambdapb}\right)}$.
For simplicity we take a point $\mub$ as a representative of the constancy region of this ideal.
Now, let $D_{\mub}= \sum e_j^{\mub} E_j$ be the antinef closure of
$\lfloor \mu_1 F_1 + \cdots + \mu_r F_r - K_\pi  \rfloor$. Consider the reduced divisor $G$
supported on those exceptional components $E_j$ such that the hyperplane $H$ has equation
$$ \lambda_1e_{1,j}+ \cdots +   \lambda_r e_{r,j} = k_j + 1 +
e_j^{{\mub}}\,. $$
Then, using Lemma \ref{lem:Constancy_facet} and Proposition \ref{prop:jump2_MMI} we have
$$\J\left(\fab^{\lambdab}\right)= \pi_{*}\Oc_{X'}(-D_{(1-\varepsilon)\lambdab}-G) =\J\left(\fab^{\lambdapb}\right).$$
\end{proof}





\subsection{An algorithm to compute the constancy regions}\label{ssec:algorithm}
The algorithm that we are going to present is a generalization of
the one given in \cite[Algorithm 3.8]{ACAMDC13} that we briefly
recall. Given an ideal  $\fa \subseteq \cO_{X,O}$, we construct
sequentially the chain of multiplier ideals
$$\Oc_{X,O}\supseteq\J(\A^{\lambda_0})\varsupsetneq\J(\A^{\lambda_1})\varsupsetneq \J(\A^{\lambda_2})\varsupsetneq...\varsupsetneq\J(\A^{\lambda_i})\varsupsetneq...$$
The starting point is to compute the multiplier ideal associated to
$\lambda_0=0$  by means of the antinef closure $D_{\lambda_0}=\sum
e_i^{\lambda_0} E_i$ of $\lfloor - K_\pi \rfloor$ using the
unloading procedure described in Section \ref{unloading}. The
log-canonical threshold is
\begin{equation*} 
\lambda_1=  \min_i\left\{\frac{k_i+1 +
e_i^{\lambda_0}}{e_i}\right\}\,.
\end{equation*}
so we may  describe its associated multiplier ideal
$\J(\A^{\lambda_1})$ just computing the antinef closure
$D_{\lambda_1}=\sum e_i^{\lambda_1} E_i$ of $\lfloor \lambda_1 F -
K_\pi \rfloor$ using the unloading procedure. By \cite[Theorem
3.5]{ACAMDC13},  the next jumping number is
\begin{equation*} 
\lambda_2=  \min_i\left\{\frac{k_i+1 +
e_i^{\lambda_1}}{e_i}\right\}.
\end{equation*}
 Then we only have to
follow the same strategy: the antinef closure $D_{\lambda_2}$ of
$\lfloor \lambda_2 F - K_\pi \rfloor$, i.e., the multiplier ideal
$\J(\A^{\lambda_2})$,  allows us to compute $\lambda_3$ and so on.

\vskip 2mm

We may interpret that at each step of the algorithm, the jumping
number $\lambda_i$ allows us to compute its region, and equivalently
its constancy region $[\lambda_i,\lambda_{i+1})$. The boundary of
this constancy region gives us the next jumping number
$\lambda_{i+1}$. In particular we have a one-to-one correspondence
between the constancy regions of the ideal $\fa$ and the jumping
numbers.

\vskip 2mm

The algorithm  for mixed multiplier ideals is more involved. It
starts with the computation of the mixed multiplier ideal associated
to $\lambdab_0=(0,\dots,0)$, using the unloading procedure. The
region $\cR_{\fab}(\lambdab_0)$  is described by means of the
formula given in Theorem \ref{thm:region}. In this case the region
coincides with the constancy region $\cC_{\fab}(\lambdab_0)$, so we
have a nice description of its boundary. For  each $\cC$-facet,
using Proposition \ref{inside_point}, we may take a single point as
a representative. The next step of the algorithm is to compute the
mixed multiplier ideals of these points in order to describe their
corresponding regions, using Theorem \ref{thm:region} once again.
Then we compute the corresponding constancy regions and their
$\cC$-facets and we  follow the same strategy.

\vskip 2mm

Roughly speaking, our strategy is to consider a discrete set of points
comprising one interior point of each $\cC$-facet. This gives a surjective
correspondence with the partially ordered set of constancy regions.
This correspondence is far from being one-to-one as in the case of a single ideal.
To keep track of these points we will consider two sets $N$ and $D$. $N$
will contain the points for which we still have to compute the
corresponding region and, once this region has been computed, we
move it to $D$. In particular, we will start with $N=\{\lambdab_0\}$
and $D=\emptyset$  the empty set.

\vskip 2mm

\begin{algorithm} { (Constancy regions and  mixed multiplier ideals)} \label{A2}

\vskip 2mm

\noindent {\tt Input:} a common log-resolution of the tuple of
 ideals $\fab=\left(\fa_1,\dots , \fa_r \right)\subseteq (\cO_{X,O})^r$. 

\noindent {\tt Output:} list of constancy regions of $\fab$ and their
corresponding mixed multiplier ideals. 

 \vskip 2mm



 Set $N=\{\lambdab_0= (0,\dots,0)\}$ and $D=\emptyset$. From $j=0$ , incrementing by $1$

\vskip 2mm

\begin{itemize}

\item[\textbf{(Step j)}]:

\vskip 2mm

 \begin{enumerate}
  \item[$(j.1)$] {\bf Choosing a convenient point in the set $N$}:

        \begin{itemize}
           \item[$\cdot$] Pick  $\lambdab_j$ the first point in the set
           $N$ and compute its region $\cR_{\fab}(\lambdab_j)$ using Theorem
           \ref{thm:region}.

           \item[$\cdot$] If there is some $\lambdab \in N$ such that
           $\lambdab \in \cR_{\fab}(\lambdab_j)$ and
           $\J(\A^{\lambdab}) \neq \J(\A^{\lambdab_j})$  then put
           $\lambdab$ first in the list $N$ and repeat this step
           $(j.1)$. Otherwise continue with step $(j.2)$.

        \end{itemize}

       \vskip 2mm

  \item [$(j.2)$] {\bf Checking out whether the region has been already computed}:

\begin{itemize}
           \item[$\cdot$] If some $\lambdab \in D$ satisfies $\J(\A^{\lambdab}) = \J(\A^{\lambdab_j})$
           then go to step $(j.4)$. Otherwise continue with step $(j.3)$.

\end{itemize}

         \vskip 2mm

  \item [$(j.3)$] {\bf Picking new points for which we have to compute its region}:

  \begin{itemize}
          \item[$\cdot$] Compute
$$
\cC(j)=\cR_{\fab}(\lambdab_j) \backslash
\left(\cR_{\fab}(\lambdab_1)\cup \cdots \cup \cR_{\fab}(\lambdab_{j-1})\right).
$$

           \item[$\cdot$] For each connected component of
           $\cC(j)$ compute its outer facets\footnote{The outer facets of $\cC(j)$ are the intersection of the boundary
           of any connected component of $\cC(j)$ with a supporting hyperplane of $\cR_{\fab}(\lambdab_j)$.}.

           \item[$\cdot$] Pick one interior point in each
           outer facet of $\cC(j)$ and add them as the last point in $N$.

   \end{itemize}

         \vskip 2mm

  \item [$(j.4)$] {\bf Update the sets $N$ and $D$}:

  \begin{itemize}
           \item[$\cdot$] Delete $\lambdab_j$ from $N$ and add
           $\lambdab_j$ as the last point in $D$.

  \end{itemize}

 \end{enumerate}

 \end{itemize}

\end{algorithm}

\vskip 2mm

\begin{remark}
 Several points of the algorithm require a comparison between mixed multiplier ideals
 (an inequality in step $(j.1)$ and an equality in step $(j.2)$).
 This can be done computing antinef closures of divisors using the unloading procedure.
 For the computation of the region $\cR_{\fab}(\lambdab)$ (steps $(j.1)$ and $(j.3)$) we use Theorem \ref{thm:region}.

\end{remark}

\begin{remark} \label{rem: minimal_j.1}
Step $(j.1)$ is equivalent to choosing a point whose constancy region is a minimal element by the
order $\preccurlyeq$ among those associated to the points in the set $N$.
Any finite subset endowed with a partial ordering has some minimal element, thus
there exists a convenient point in the set $N$ that allows to continue with step $(j.2)$.
\end{remark}

\begin{lemma}
At each step $j$, the algorithm overcomes step $(j.1)$ and provides updated sets $N$ and $D$.
\end{lemma}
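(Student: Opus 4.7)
The plan is to first establish that the working set $N$ remains finite at every step, and then to argue that the potentially looping substep $(j.1)$ terminates by a strict descent argument under the partial order $\preccurlyeq$ on constancy regions. The update of $N$ and $D$ will then follow directly from substep $(j.4)$.

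First I would prove by induction on $j$ that $N$ is finite when step $j$ begins. The base case is immediate since $N=\{\lambdab_0\}$ initially. For the inductive step, observe that each region $\cR_{\fab}(\lambdab_i)$ is a rational convex polytope by Theorem \ref{thm:region}, so the set $\cC(j)$ computed in $(j.3)$ is the complement inside a bounded polytope of finitely many such polytopes; hence it has only finitely many connected components, each of which has only finitely many outer facets. Therefore $(j.3)$ adds only finitely many new points to $N$, and since $(j.4)$ removes exactly one point, $N$ stays finite.

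Next I would analyze the loop inside $(j.1)$. Each time the loop condition is triggered, the current candidate $\lambdab_j$ gets replaced by some $\lambdab \in N$ satisfying $\lambdab \in \cR_{\fab}(\lambdab_j)$ together with $\J(\fab^{\lambdab}) \neq \J(\fab^{\lambdab_j})$. The first condition gives $\cC_{\fab}(\lambdab) \preccurlyeq \cC_{\fab}(\lambdab_j)$, while the second strengthens it to a strict relation $\cC_{\fab}(\lambdab) \prec \cC_{\fab}(\lambdab_j)$. Since $N$ is finite and $\prec$ is a strict partial order on the induced constancy regions, any strictly descending sequence has bounded length, so the loop must terminate. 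Equivalently, as pointed out in Remark \ref{rem: minimal_j.1}, the loop selects a minimal element of the finite poset $\{\cC_{\fab}(\lambdab) \mid \lambdab \in N\}$ under $\preccurlyeq$, and such a minimal element exists.

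Once $(j.1)$ has produced such a minimal $\lambdab_j$, the remaining substeps are finite: $(j.2)$ performs finitely many ideal comparisons against the finite set $D$ via antinef closure computations; $(j.3)$ carries out set-theoretic manipulations of finitely many rational convex polytopes; and $(j.4)$ simply removes $\lambdab_j$ from $N$ and appends it to $D$, thereby updating both sets. The main (rather mild) obstacle is the termination of the loop in $(j.1)$, which is handled cleanly by combining the strict descent under $\preccurlyeq$ with the finiteness of $N$ established in the first paragraph.
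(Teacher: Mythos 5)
Your proof is correct and follows the same line of reasoning the paper gives: the paper does not present a standalone proof of this lemma, but Remark~\ref{rem: minimal_j.1} supplies exactly your argument, namely that step $(j.1)$ selects a minimal element under $\preccurlyeq$ among the constancy regions represented in $N$, and every finite partially ordered set has a minimal element. You supplement this by spelling out, via induction, that $N$ stays finite at each step (each $\cC(j)$ is a boolean combination of finitely many bounded rational polytopes, hence has finitely many connected components and finitely many outer facets), which is a detail the paper leaves implicit; this is a welcome addition, and the rest of your reasoning --- strict descent in a finite poset forces termination of the loop in $(j.1)$, and $(j.2)$--$(j.4)$ are trivially finite --- is exactly as intended.
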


\begin{theorem} \label{region_Cj}
The constancy region of the point $\lambdab_j$ chosen
at step $(j.1)$ is computed at step $(j.3)$ of the algorithm, i.e., $\cC(j)=\cC_{\fab}(\lambdab_j)$,
and one interior point for each $\cC$-facet of $\cC_{\fab}(\lambdab_j)$ is added to the set $N$.
\end{theorem}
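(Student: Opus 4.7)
The plan is to prove the theorem by strong induction on $j$, maintaining three invariants at the end of step $(i.4)$ for each $i<j$: (IH1) if step $(i.3)$ was executed for $\lambdab_i$, then $\cC(i) = \cC_{\fab}(\lambdab_i)$; (IH2) the family $P_{j-1} := \{\cC_{\fab}(\lambdab_i) : i<j \text{ and step } (i.3) \text{ was executed}\}$ is downward-closed in $\preccurlyeq$; and (IH3) for every constancy region $\cC \notin P_{j-1}$ that shares a $\cC$-facet with some region of $P_{j-1}$, an interior point of that facet (hence a representative of $\cC$) belongs to $N$ at the start of step $j$. The base case $j=0$ is immediate since $\cR_{\fab}(\lambdab_0)=\cC_{\fab}(\lambdab_0)$.

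The core of the inductive step is the following claim: when step $(j.3)$ is reached, every $\cC \prec \cC_{\fab}(\lambdab_j)$ already lies in $P_{j-1}$. Suppose not, and pick a $\preccurlyeq$-minimal unprocessed predecessor $\cC^*$ of $\cC_{\fab}(\lambdab_j)$. Choose a generic interior point $\mub \in \cC^*$; by the positive-orthant properties of mixed multiplier ideals, the constancy region is non-decreasing in $\preccurlyeq$ along the segment from the origin to $\mub$. If $t^*$ is the infimum of those $t\in(0,1]$ with $t\mub \in \cC^*$, then by genericity $t^*\mub$ lies in the interior of a $\cC$-facet separating $\cC^*$ from a spatially adjacent region $\cC'$ with $\cC' \prec \cC^*$. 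By the minimality of $\cC^*$, the region $\cC'$ is processed; at the earlier step $(k.3)$ that treated a representative of $\cC'$, the algorithm added to $N$ an interior point $\nu$ of this $\cC$-facet. By Proposition~\ref{inside_point} together with the positive-orthant property applied to an outward perturbation, $\cC_{\fab}(\nu) = \cC^*$. Moreover $\nu$ must still lie in $N$ at time $j$, since otherwise $\cC^*$ itself would have entered $P$ via a previous execution of step $(l.3)$ or via step $(l.2)$, contradicting $\cC^* \notin P_{j-1}$. But then $\nu\in N$ satisfies $\cC_{\fab}(\nu)=\cC^* \prec \cC_{\fab}(\lambdab_j)$, contradicting the minimality of $\lambdab_j$ in $N$ enforced by step $(j.1)$ (Remark~\ref{rem: minimal_j.1}).

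Granted this claim, the identification $\cC(j) = \cC_{\fab}(\lambdab_j)$ is straightforward. By (IH2), no previously processed $\lambdab_i$ can satisfy $\cC_{\fab}(\lambdab_i) \succ \cC_{\fab}(\lambdab_j)$ (otherwise $\cC_{\fab}(\lambdab_j)$ would already lie in $P_{j-1}$ by downward-closedness, and step $(j.2)$ would have skipped to step $(j.4)$), while equality is directly ruled out by step $(j.2)$. Consequently the intersection $\cR_{\fab}(\lambdab_j) \cap \bigcup_{i<j} \cR_{\fab}(\lambdab_i)$ consists precisely of constancy regions strictly smaller than $\cC_{\fab}(\lambdab_j)$, and by the claim every such region is covered by some $\cR_{\fab}(\lambdab_i)$. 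Corollary~\ref{formula_constancy} then yields $\cC(j)=\cC_{\fab}(\lambdab_j)$. The second statement of the theorem is immediate: the outer facets of $\cC(j)=\cC_{\fab}(\lambdab_j)$ are by construction its $\cC$-facets, and step $(j.3)$ picks one interior point in each. Finally, invariants (IH1)--(IH3) propagate to index $j$ by construction.

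The main obstacle is the key claim of the second paragraph, namely that any minimal unprocessed predecessor $\cC^*$ of $\cC_{\fab}(\lambdab_j)$ must already have a representative in $N$. This relies on two geometric ingredients: the segment argument, which guarantees that $\cC^*$ admits a spatially adjacent strictly smaller region $\cC'$ (which, by minimality, is necessarily processed), and Proposition~\ref{inside_point} together with the neighborhood property, which identifies the constancy region of the interior facet point added to $N$ as $\cC^*$ itself. Once these are in hand, the rest of the argument amounts to bookkeeping with Corollary~\ref{formula_constancy}.
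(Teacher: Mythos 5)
Your proof is correct, and it follows the same basic blueprint as the paper's (induction on $j$, contradiction via step $(j.1)$, and the geometric fact that $\cC$-facets cannot be crossed by jumping walls), but it reorganizes the argument in a cleaner way. The paper splits the inductive step into two separate conditions: (a) none of the previously processed $\lambdab_i$ has $\cC_{\fab}(\lambdab_i) \succ \cC_{\fab}(\lambdab_j)$, proved by tracing back to the step $m$ at which $\lambdab_j$ entered $N$; and (b) the maximal constancy regions below $\cC_{\fab}(\lambdab_j)$ each have a processed representative, proved by following a chain of $\cC$-facets along the segment $\overline{\lambdab_0\,\mub_i}$ and locating the first unprocessed link, which must still sit in $N$. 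You instead prove the stronger and more transparent statement that \emph{every} $\cC \prec \cC_{\fab}(\lambdab_j)$ is already processed, via a $\preccurlyeq$-minimal unprocessed predecessor $\cC^*$: your segment argument plus Proposition~\ref{inside_point} produces a representative of $\cC^*$ still in $N$, contradicting the minimality enforced by step $(j.1)$, and condition (a) then falls out of downward-closedness together with step $(j.2)$. What you gain is a single unified contradiction argument and an invariant (downward-closedness of the processed set) that makes the bookkeeping with Corollary~\ref{formula_constancy} immediate; what the paper's version emphasizes more explicitly is that only the \emph{maximal} predecessors are strictly necessary for the set-difference formula, which matches the remark after Corollary~\ref{formula_constancy}. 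One small redundancy: once $\nu$ and $t^*\mub$ are known to be interior points of the same $\cC$-facet, Proposition~\ref{inside_point} alone yields $\cC_{\fab}(\nu)=\cC^*$; the extra appeal to the positive-orthant property is not needed.
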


\begin{proof}
We argue by induction on $j$. For the case $j=1$ the statement holds since we pick $\lambdab_1=\lambdab_0$
at step $(1.1)$ and step $(1.3)$ is performed.

\vskip 2mm

Now assume that the statement is true all the steps up to $j-1$. We want to prove it for step $j$.
Without loss of generality we may assume that step $(j.3)$ must be performed, so
$\J(\A^{\lambdab_i}) \neq \J(\A^{\lambdab_j})$ for all $1\leqslant i \leqslant j-1$. Notice that, by equation
(\ref{eq:const_region4}),  $\cC(j)=\cC_{\fab}(\lambdab_j)$ is equivalent to the fulfillment of the following
two conditions:

\begin{itemize}

\item[a)] Each $\lambdab_i$, $1\leqslant i \leqslant j-1$, satisfies either
$\cC_{\fab}(\lambdab_i) \preccurlyeq \cC_{\fab}(\lambdab_j)$ or both constancy regions are not related by the partial order.

\item[b)] Consider a set  $\{\mub_1,\dots , \mub_s\} \subset \bR_{\geqslant 0}^r$ of representatives of the
constancy regions which are maximal elements among those constancy regions smaller than $\cC_{\fab}(\lambdab_j)$.
Then, for each $k\in \{1,\dots,s\}$ there is some $i_k \in \{1,\dots,j-1\}$ such that
$\cC_{\fab}(\lambdab_{i_k}) \preccurlyeq \cC_{\fab}(\mub_k)$.

\end{itemize}

First we are going to prove that condition a) is satisfied. Assume the contrary, i.e. there exists $i<j$
with $\cC_{\fab}(\lambdab_i) \succ \cC_{\fab}(\lambdab_j)$, that is
$\cR_{\fab}(\lambdab_i)\varsupsetneq \cR_{\fab}(\lambdab_j)$. Assume that $\lambdab_j$ was added to $N$
at step $m <j$. Hence, by induction hypothesis $\lambdab_j$ is an interior point of some $\cC$-facet
of $\cR_{\fab}(\lambdab_m)$, and in particular $\cR_{\fab}(\lambdab_m)\subseteq \cR_{\fab}(\lambdab_j)$.
Thus $\cR_{\fab}(\lambdab_m) \varsubsetneq \cR_{\fab}(\lambdab_i)$, i.e. $\cC_{\fab}(\lambdab_m) \prec \cC_{\fab}(\lambdab_i)$.
We distinguish two cases:

\begin{itemize}
\item[$\cdot$] If $i<m$ we get a contradiction with the induction hypothesis at step $m$ since
condition a) is not fulfilled.

\item[$\cdot$] If $i>m$, we have that $\lambdab_j$ already belongs to $N$ at step $i$. This contradicts the
requirement of step $(i.1)$ which says that $\lambdab_j$ should be treated before $\lambdab_i$.

\end{itemize}

Finally we prove condition b). Assume the contrary, i.e there exists $\mub_i$ whose constancy region is not
dominated by any $\cC_{\fab}(\lambdab_k)$, $1\leqslant k \leqslant j-1$. Without loss of generality we may assume
that the segment $\overline{\lambdab_0 \mub_i}$ intersect the jumping walls at interior points of the $\cC$-facets,
namely in the jumping points $\lambdab_0= \nub_1, \nub_2,\dots, \nub_m= \mub_i$ with
$\nub_k \in \overline{\cC_{\fab}(\nub_{k-1})}$, and thus $\nub_{k-1} \in \cR_{\fab}(\nub_{k})$.

\vskip 2mm

By induction hypothesis, representatives of each constancy region
$\{\cC_{\fab}(\nub_{1}),\dots, \cC_{\fab}(\nub_{m'}) \}$, $m'<m$, are added to $N$ at some steps before
step $j$, being $\lambdapb$ the last representative. Hence, we still have $\lambdapb \in N$ at step $j$
and $$ \cR_{\fab}(\lambdapb)=\cR_{\fab}(\nub_{m'}) \subseteq \cR_{\fab}(\mub_i)\subsetneq \cR_{\fab}(\lambdab_j).$$
 This contradicts the requirement of step $(j.1)$ for $\lambdab_j$.
\end{proof}

As a consequence of Theorem \ref{region_Cj} we obtain the following

\begin{corollary} \label{cor:setD}
At step $j$ of the algorithm, we have that:

\begin{itemize}
\item[i)] The set $D$ contains at least a representative of each constancy region inside $\cR_{\fab}(\lambdab_j)$.

\item[ii)] The set $D$ contains  a representative of all $\cC$-facets inside $\cR_{\fab}(\lambdab_j)$.

\item[iii)] A complete description of the jumping walls inside $\cR_{\fab}(\lambdab_j)$ is obtained by intersecting
the region $\cR_{\fab}(\lambdab_j)$ with the jumping walls associated to the points $\lambdab_1,\dots , \lambdab_{j-1}$.

\end{itemize}

\end{corollary}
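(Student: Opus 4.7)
The plan is to proceed by induction on $j$, using Theorem \ref{region_Cj} as the main engine. Item (i) carries the main content; items (ii) and (iii) then drop out as short consequences of (i) together with Proposition \ref{inside_point}.

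For the base case $j=1$, I would simply note that $\lambdab_1=\lambdab_0$, that $\cC_{\fab}(\lambdab_0)$ is the minimum element of the partial order $\preccurlyeq$, and hence $\cR_{\fab}(\lambdab_0)$ contains only the constancy region $\cC_{\fab}(\lambdab_0)$ itself, with no $\cC$-facets or jumping walls strictly inside. After step~$1$ we have $D=\{\lambdab_0\}$, so all three items hold trivially.

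For the inductive step, assume (i), (ii), (iii) hold at all previous steps. Theorem \ref{region_Cj} gives the key identity
\[
\cC(j)=\cR_{\fab}(\lambdab_j)\setminus\bigl(\cR_{\fab}(\lambdab_1)\cup\cdots\cup\cR_{\fab}(\lambdab_{j-1})\bigr)=\cC_{\fab}(\lambdab_j).
\]
Given any $\mub\in\cR_{\fab}(\lambdab_j)$, I would split into two cases. If $\mub\in\cC_{\fab}(\lambdab_j)$, then $\lambdab_j$ itself, added to $D$ at step $(j.4)$, is a representative of $\cC_{\fab}(\mub)$. Otherwise the identity above forces $\mub\in\cR_{\fab}(\lambdab_i)$ for some $i<j$, and the inductive hypothesis at step $i$ provides a representative of $\cC_{\fab}(\mub)$ in $D$ at step $i$, which persists in $D$ at step $j$ because $D$ only grows. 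This settles (i).

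Item (ii) then follows quickly: any $\cC$-facet inside $\cR_{\fab}(\lambdab_j)$ is a $\cC$-facet of some constancy region $\cC_{\fab}(\mub)\preccurlyeq\cC_{\fab}(\lambdab_j)$, and by Proposition \ref{inside_point} the interior points of that $\cC$-facet share a single mixed multiplier ideal, so (i) already supplies a representative of the associated constancy region in $D$. For (iii), I would observe that every jumping wall inside $\cR_{\fab}(\lambdab_j)$ is the boundary of some region $\cR_{\fab}(\mub)$ with $\cC_{\fab}(\mub)\prec\cC_{\fab}(\lambdab_j)$, so applying (i) to $\mub$ produces some $\lambdab_i$ with $i<j$ whose jumping wall cuts out precisely this piece after intersection with $\cR_{\fab}(\lambdab_j)$.

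The main obstacle I anticipate is making sure Theorem \ref{region_Cj} really is applicable at step $j$: its proof hinges on the previous points $\lambdab_1,\ldots,\lambdab_{j-1}$ having been processed in an order compatible with $\preccurlyeq$, which is precisely the content of step $(j.1)$ and Remark \ref{rem: minimal_j.1}. Once this is in hand, the rest of the argument is essentially bookkeeping about which representatives sit in $D$ versus $N$, and a careful reading of ``representative of a $\cC$-facet'' in (ii) as a representative of the uniquely determined constancy region attached to its relative interior.
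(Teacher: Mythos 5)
Your argument for item (i) is sound and runs in the same spirit as the paper's, though you organize it as a forward induction on $j$ where the paper argues by a reverse induction through the maximal sub-regions of $\cR_{\fab}(\lambdab_j)$ identified in the proof of Theorem \ref{region_Cj}. One small omission: when step $(j.2)$ detects $\J(\fab^{\lambdab_j})=\J(\fab^{\lambdab_m})$ for some $m<j$ and step $(j.3)$ is skipped, Theorem \ref{region_Cj} does not directly describe $\cC(j)$; you should note that in that case $\cR_{\fab}(\lambdab_j)=\cR_{\fab}(\lambdab_m)$, so the claim for step $j$ follows from the inductive hypothesis at step $m$.

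For item (ii) there is a genuine gap. You explicitly read ``a representative of a $\cC$-facet'' as ``a representative of the constancy region containing its relative interior,'' which makes (ii) an immediate restatement of (i) together with Proposition \ref{inside_point}. But the intended claim is stronger: $D$ should contain a point lying inside each $\cC$-facet of each constancy region contained in $\cR_{\fab}(\lambdab_j)$ --- precisely the interior points that the algorithm deposits in $N$ at step $(j.3)$. A representative of the ambient constancy region need not sit on the particular $\cC$-facet; several $\cC$-facets of distinct predecessors can share a single constancy region along their interiors, and (i) only guarantees one representative of that region, not one on each facet. The paper proves the stronger statement by invoking the second assertion of Theorem \ref{region_Cj} --- that at each step $i$ an interior point of every $\cC$-facet of $\cC_{\fab}(\lambdab_i)$ is added to $N$ --- together with the priority rule of step $(j.1)$ to conclude that all such points still inside $\cR_{\fab}(\lambdab_j)$ are processed and moved to $D$ before $\lambdab_j$ is treated. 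Your proof never uses this part of Theorem \ref{region_Cj} and so cannot reach the stronger (ii).

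Item (iii) is treated by both you and the paper as a direct consequence of (i), and your sketch is adequate.
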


\begin{proof}
 From the proof of Theorem \ref{region_Cj} we infer that at step $j$, the maximal elements among
 all the constancy regions inside $\cR_{\fab}(\lambdab_j)$ have already representants
 $\lambdab_{i_1},\dots , \lambdab_{i_s}$ in $D$, $i_1< \cdots < i_s < j$. Arguing by reverse induction with any
 of these points  $\lambdab_{i_k}$, the first claim follows.

 \vskip 2mm

 Now, the statement of Theorem \ref{region_Cj} asserts that at each step $i$ of the algorithm, a representative
 of each $\cC$-facet of $\cC_{\fab}(\lambdab_i)$ is added to $N$. If we only take into account the points $\lambdab_i$
 of constancy regions inside $\cR_{\fab}(\lambdab_j)$, the subsequent representatives in $\cC$-facets still lying inside
 $\cR_{\fab}(\lambdab_j)$ must be treated (and added to $D$) before $\lambdab_j$, in virtue of step $(j.1)$ of the
 algorithm.

 \vskip 2mm

 Part iii) of the statement is a direct consequence of claim i).
\end{proof}

\begin{remark} \label{rem:setD}
 Each point $\lambdab$ included in $N$ at some step of the algorithm  is treated
after a finite number of steps and added to $D$. Indeed, the order of incorporation of the points in $N$ is
preserved unless step $(j.1)$ priorizes some other point. This happens only a finite amount of times
since there is only a finite number of constancy regions inside any given region.

\end{remark}

\begin{proposition}
Once a point $\lambdab \in \bR_{\geqslant 0}^r$ is fixed, a set $D$ which includes a representative of all
constancy regions in the compact $(\lambdab_0 + \bR_{\geqslant 0}^r) \cap (\lambdab - \bR_{\geqslant 0}^r)$
is achieved after a finite number of steps of the algorithm.

\end{proposition}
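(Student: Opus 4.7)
My plan is to induct on the partial order $\preccurlyeq$, after first reducing to a finite problem. I begin by observing that the compact $K := (\lambdab_0 + \bR_{\geqslant 0}^r) \cap (\lambdab - \bR_{\geqslant 0}^r)$ meets only finitely many constancy regions $\cC_1, \ldots, \cC_n$: the whole decomposition of $\bR_{\geqslant 0}^r$ is cut out by the finite family of hyperplanes $H_i$ from equation (\ref{hyperplanes}), and only finitely many of the resulting rational polytopes can fit inside a bounded set. It then suffices to prove that every constancy region meeting $K$ receives a representative in $D$ after finitely many steps; the maximum of the $n$ finite waiting times will then provide the bound claimed.

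For the main induction, the base case is the constancy region of $\lambdab_0$, the $\preccurlyeq$-minimum, which is handled at step~$1$ of Algorithm~\ref{A2}. For the inductive step, fix a constancy region $\cC$ meeting $K$ with $\cC \neq \cC_{\fab}(\lambdab_0)$, and pick an interior point $\mub \in \cC$. After a small generic perturbation of $\mub$ if necessary, the segment $\overline{\lambdab_0 \mub}$ crosses the supporting hyperplanes of the jumping walls transversally and only at interior points of $\cC$-facets, producing a chain
\[\cC_{\fab}(\lambdab_0) = \cC^{(0)} \prec \cC^{(1)} \prec \cdots \prec \cC^{(m)} = \cC\]
in which consecutive regions are separated by a single $\cC$-facet whose interior is pierced by the segment. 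By the induction hypothesis, $\cC^{(m-1)}$ acquires a representative $\lambdab^{(m-1)} \in D$ at some finite step $s$. Theorem \ref{region_Cj} guarantees that at step $s$ an interior point of every $\cC$-facet of $\cC_{\fab}(\lambdab^{(m-1)})$ was added to $N$, and in particular an interior point $\nub$ of the $\cC$-facet shared with $\cC$. Because interior points of the same $\cC$-facet share the same mixed multiplier ideal (Proposition \ref{inside_point}) and this ideal coincides with the one of the adjacent outer constancy region, we have $\nub \in \cC$. By Remark \ref{rem:setD}, $\nub$ is then processed and added to $D$ after finitely many further steps, completing the induction.

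The main obstacle I foresee is the clean setup of the chain of adjacent regions. I need to guarantee that $\overline{\lambdab_0 \mub}$ avoids the codimension-two strata where different $\cC$-facets meet, so that each transition $\cC^{(k)} \to \cC^{(k+1)}$ happens through the interior of a single $\cC$-facet, which is precisely the geometric situation that Theorem \ref{region_Cj} exploits. A sufficiently generic choice of $\mub$ inside $\cC$ achieves this, using that each jumping wall is a finite union of codimension-one rational polytopes whose pairwise intersections have codimension at least two in $\bR_{\geqslant 0}^r$; for $r=1$ there is nothing to check because each $\cC$-facet is a single point and the chain is already forced by the linear order on $\bR_{\geqslant 0}$.
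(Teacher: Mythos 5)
Your proof captures the same core idea as the paper's — walking a segment from the origin to $\cC$ that crosses jumping walls only at interior points of $\cC$-facets, so that Theorem~\ref{region_Cj} forces the algorithm to eventually seed a representative of the next region in the chain. The main structural difference is that you wrap this in an explicit induction on $\preccurlyeq$ over regions meeting $K$, whereas the paper first notes the simple but decisive inclusion $K=(\lambdab_0 + \bR_{\geqslant 0}^r) \cap (\lambdab - \bR_{\geqslant 0}^r)\subseteq \cR_{\fab}(\lambdab)$, and then uses Corollary~\ref{cor:setD}(i) together with Remark~\ref{rem:setD} to reduce the whole proposition to a \emph{single} claim: that some representative of $\cC_{\fab}(\lambdab)$ is ever added to $N$. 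Once that one region is processed, Corollary~\ref{cor:setD} immediately delivers representatives of \emph{every} constancy region inside $\cR_{\fab}(\lambdab)$, hence inside $K$. Your route is less economical because you must establish the conclusion for each of the (finitely many) regions meeting $K$ separately.

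There is also a small gap in your induction as written. Your inductive hypothesis is stated for constancy regions \emph{meeting $K$}, but you apply it to $\cC^{(m-1)}$, which you have not argued meets $K$: you only choose $\mub$ as an interior point of $\cC$ and allow a further ``small generic perturbation'', neither of which constrains $\mub$ (or the segment) to stay inside $K$. The fix is easy and worth stating: choose $\mub \in \cC \cap K$ (possible since $\cC$ meets $K$, and one can take $\mub$ interior to $\cC$ after a perturbation that remains inside $\cC \cap K$). Since $\lambdab_0 \in K$ and $K$ is convex, the whole segment $\overline{\lambdab_0\,\mub}$ lies in $K$, so every $\cC^{(k)}$ in your chain meets $K$ and the inductive hypothesis applies to $\cC^{(m-1)}$. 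Alternatively, you could drop the ``meeting $K$'' restriction and induct over all constancy regions contained in $\cR_{\fab}(\lambdab)$ — which is precisely what the paper's inclusion $K \subseteq \cR_{\fab}(\lambdab)$ sets up.

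One more minor imprecision: you say that at step $s$ (when $\lambdab^{(m-1)}$ enters $D$) an interior point of each $\cC$-facet of $\cC^{(m-1)}$ is added to $N$. If $\lambdab^{(m-1)}$ is handled via step $(j.2)$ because some earlier representative of $\cC^{(m-1)}$ was already processed, step $(j.3)$ is skipped at step $s$; the facet representatives were then added at that earlier step. The conclusion you need — that an interior point of the shared facet enters $N$ at \emph{some} finite step — still holds, but the phrasing should not tie it to the specific step $s$.
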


\begin{proof}
 Observe that $(\lambdab_0 + \bR_{\geqslant 0}^r) \cap (\lambdab - \bR_{\geqslant 0}^r) \subseteq \cR_{\fab}(\lambdab)$.
 In virtue of Corollary \ref{cor:setD} and Remark \ref{rem:setD}, we only have to prove that some representative
 of $\cC_{\fab}(\lambdab)$ is added to $N$ at some step. We may take $\lambdapb \in \cC_{\fab}(\lambdab)$ such that
 the segment $\overline{\lambdab_0 \lambdapb }$ intersects the jumping walls at interior points of $\cC$-facets,
 namely in the jumping points $\lambdab_0= \nub_1, \nub_2,\dots, \nub_m= \lambdapb$. The algorithm starts
 with $\nub_1$ and incorporates $\nub_2$ to $N$. Since $\nub_k \in \overline{\cC_{\fab}(\nub_{k-1})}$, once
 $\nub_k$ is selected at some finite step $i_k$, $\nub_{k+1}$ is added to $N$ at this same step.
 Hence, $\lambdapb$ is selected at some step $(j.1)$. Notice that this implies that no point in $N$
 lies in $\cR_{\fab}(\lambdapb) = \cR_{\fab}(\lambdab)$, i.e. $N \cap \cR_{\fab}(\lambdab)=\emptyset$.

 \vskip 2mm

 Conversely, if at some step $j$ $N \cap \cR_{\fab}(\lambdab)=\emptyset$,  then the new $N$ obtained at any
 forthcoming step still satisfies $N \cap \cR_{\fab}(\lambdab)=\emptyset$. If some
 $\lambdab_i \not \in \cR_{\fab}(\lambdab)$ with $i> j$ is chosen at step $(i.1)$, any new point $\mub$
 added to $N$ at step $(i.3)$ satisfies
 $\J(\fab^{\mub}) \varsubsetneq \J(\fab^{\lambdab_i})  \nsupseteq \J(\fab^{\lambdab})$ and hence
 $\J(\fab^{\mub}) \nsupseteq \J(\fab^{\lambdab})$, equivalently $\mub \not \in \cR_{\fab}(\lambdab)$.
 Since the algorithm starts with $\lambdab_1=\lambdab_0 \in \cR_{\fab}(\lambdab)$, we may conclude that at
 a step where $N \cap \cR_{\fab}(\lambdab)=\emptyset$ necessarily the set $D$ obtained at that step contains
 a representative of $\cR_{\fab}(\lambdab)$.
\end{proof}

\vskip 2mm

We present the following simple example to highlight the nuances of
the procedure. In the example, step $(j.1)$ is
performed when computing the region associated to the point
$\lambdab_5$ and step $(j.2)$ is performed for the points
$\lambdab_2$, $\lambdab_4$, $\lambdab_7$ and $\lambdab_8$. In particular, step  $(j.2)$ is included
to avoid too many computations.

\vskip 2mm

\begin{example}\label{ex:algMMI}

Consider the following set of ideals $\fab=(\fa_1,\fa_2)$ with
$\fa_1=(x^3,y^7)$ and $\fa_2=(x,y^2)$ on a smooth surface $X$. We represent the relative
canonical divisor $K_\pi$ and $F_1$ and $F_2$ in the dual graph as
follows: \hspace{1mm}

\begin{center}

\begin{tabular}{ccc}
   \begin{tikzpicture}[scale=0.9]
   \draw  (-4,0) -- (0,0);
   \draw [dashed,->,thick] (-2,0) -- (-1,1);
   \draw [dashed,->,thick] (-3,0) -- (-2,1);
   \draw (-4.2,-0.3) node {{\small $E_1$}};
   \draw (-3.2,-0.3) node {{\small $E_2$}};
   \draw (-0.2,-0.3) node {{\small $E_3$}};
   \draw (-1.2,-0.3) node {{\small $E_4$}};
   \draw (-2.2,-0.3) node {{\small $E_5$}};
   \filldraw  (0,0) circle (2pt)
              (-1,0) circle (2pt)
              (-4,0) circle (2pt);
   \filldraw  [fill=white]  (-2,0) circle (3pt)
                            (-3,0) circle (3pt);
   \end{tikzpicture} &
\hspace{5mm}
\begin{tikzpicture}[scale=0.9]
   \draw  (-4,0) -- (0,0);
   \draw [dashed,->,thick] (-2,0) -- (-1,1);
   \draw [dashed,->,thick] (-3,0) -- (-2,1);
   \draw (-4.2,-0.3) node {{\small $1$}};
   \draw (-3.2,-0.3) node {{\small $2$}};
   \draw (-0.2,-0.3) node {{\small $3$}};
   \draw (-1.2,-0.3) node {{\small $6$}};
   \draw (-2.2,-0.3) node {{\small $9$}};
   \filldraw  (0,0) circle (2pt)
              (-1,0) circle (2pt)
              (-4,0) circle (2pt);
   \filldraw  [fill=white]  (-2,0) circle (3pt)
                            (-3,0) circle (3pt);
 \end{tikzpicture}&
\hspace{5mm}
\begin{tikzpicture}
   \draw  (-4,0) -- (0,0);
   \draw [dashed,->,thick] (-2,0) -- (-1,1);
   \draw [dashed,->,thick] (-3,0) -- (-2,1);
   \draw (-4.2,-0.3) node {{\tiny  $(3,1)$}};
   \draw (-3.2,-0.3) node {{\tiny   $(6,2)$}};
   \draw (-0.2,-0.3) node {{\tiny   $(7,2)$}};
   \draw (-1.2,-0.3) node {{\tiny   $(14,4)$}};
   \draw (-2.2,-0.3) node {{\tiny  $(21,6)$}};
   \filldraw  (0,0) circle (2pt)
              (-1,0) circle (2pt)
              (-4,0) circle (2pt);
   \filldraw  [fill=white]  (-2,0) circle (3pt)
                            (-3,0) circle (3pt);
 \end{tikzpicture}\\
{ Vertex ordering}&
{ $K_\pi$}&
{ $(F_1, F_2)$}
\end{tabular}

\end{center}

\vskip 2mm The blank dots correspond to dicritical divisors in one
of the ideals and their excesses are represented by broken arrows.
For simplicity we will collect the values of any divisor in a
vector. Namely, we have $K_\pi=(1,2,3,6,9)$, $F_1=(3,6,7,14,21)$ and
$F_2=(1,2,2,4,6)$. In the algorithm we will have to perform several
times unloading steps, so we will have to consider the intersection
matrix $M=(E_i\cdot E_j)_{1\leqslant i,j\leqslant 5}$

{\Small
  \[M=\left( \begin {array}{ccccc} -2&1&0&0&0\\ \noalign{\medskip}1&-4&0&0&
1\\ \noalign{\medskip}0&0&-2&1&0\\ \noalign{\medskip}0&0&1&-2&1
\\ \noalign{\medskip}0&1&0&1&-1
\end {array} \right)\,.\]}

Notice that $E_2$ and $E_5$ are the only dicritical divisors. Then,
as a consequence of Theorem \ref{thm:region}, the region of a given
point $\lambdab=(\lambda_1,\lambda_2)$ is defined by
       $$\begin{array}{lcr}
          6 z_1  +2 z_2   & < &2+1+e_2^{\lambdab}\,,\\
          21 z_1 +6 z_2   & < &9+1+e_5^{\lambdab}\,.
         \end{array}
$$

 We keep track of
what we have to compute with the set $N$ that for the moment will
only contain $\lambdab_0=(0,0)$. The set $D$ that keeps track of
the points that we have already computed will be empty  since we have not
computed anything yet.

\vskip 2mm

$\bullet$ {\bf Step 0}. We start computing the multiplier ideal corresponding to
 $\lambdab_0=(0,0)$. Namely, the antinef closure of the
divisor $\lfloor 0 F_1 + 0 F_2 - K_\pi\rfloor$ is
$D_{\lambdab_0}=0$. The corresponding region
$\Reg_{\fab}(\lambdab_0)$  is given by the inequalities
$$\begin{array}{lcr}
          6 z_1    +2 z_2 & < &3,\\
          21 z_1   +6 z_2 & < &10.
         \end{array}
$$
Notice that the constancy region $\mathcal{C}_{\fab}(\lambdab_0)$
coincides with $\Reg_{\fab}(\lambdab_0)$. Its boundary, i.e. the
corresponding jumping wall, has two $\mathcal{C}$-facets so,
according to Proposition \ref{inside_point}, we only need to consider an interior point of each
$\mathcal{C}$-facet in order to continue our procedure. For
simplicity we consider the barycenters $\left(\frac{1}{6},1\right)$
and $\left(\frac{17}{42},\frac{1}{4}\right)$ corresponding to each
segment.

\vskip 2mm

\begin{itemize}
\item[$\cdot$]
$N=\{\left(\frac{1}{6},1\right),
\left(\frac{17}{42},\frac{1}{4}\right) \}$.
\item[$\cdot$] $D= \{\left(0,0\right) \}$.
\end{itemize}

\vskip 2mm

$\bullet$ {\bf Step 1}. We pick the first point
$\lambdab_1:=\left(\frac{1}{6},1\right)$ in $N$ and we compute its
multiplier ideal. Namely, $\lfloor \frac{1}{6} F_1 +  F_2 -
K_\pi\rfloor=(0,1,0,0,0)$ and its antinef closure is
$D_{\lambdab_1}=(1,1,1,2,3)$, so the region $\Reg_{\fab}(\lambdab_1)$
is given by the inequalities
$$\begin{array}{lcr}
          6 z_1    +2 z_2 & < &4,\\
          21 z_1   +6 z_2 & < &13.
         \end{array}
$$ The constancy region $\mathcal{C}_{\fab}(\lambdab_1)=\Reg_{\fab}(\lambdab_1)\backslash\Reg_{\fab}(\lambdab_0)$
has two $\mathcal{C}$-facets for which we pick the interior points
$\left(\frac{1}{6},\frac{3}{2}\right)$ and
$\left(\frac{10}{21},\frac{1}{2}\right)$ respectively. Then, the
sets $N$ and $D$ are:

\vskip 2mm

\begin{itemize}
\item[$\cdot$]
$N=\{ \left(\frac{17}{42},\frac{1}{4}\right),
\left(\frac{1}{6},\frac{3}{2}\right),
\left(\frac{10}{21},\frac{1}{2}\right) \}$.
\item[$\cdot$] $D= \{\left(0,0\right), \left(\frac{1}{6},1\right) \}$.
\end{itemize}

\vskip 2mm

$\bullet$ {\bf Step 2}. The point
$\lambdab_2:=\left(\frac{17}{42},\frac{1}{4}\right)$ satisfies
$\J(\fab^{\lambdab_2})=\J(\fab^{\lambdab_1})$, so they have the same
region. In order to keep track of all the $\mathcal{C}$-facets we
have to consider this point as well, so the sets $N$ and $D$ that we
get after this step are:

\vskip 2mm
\begin{itemize}
\item[$\cdot$]
$N=\{ \left(\frac{1}{6},\frac{3}{2}\right),
\left(\frac{10}{21},\frac{1}{2}\right) \}$.
\item[$\cdot$] $D= \{\left(0,0\right), \left(\frac{1}{6},1\right), \left(\frac{17}{42},\frac{1}{4}\right)\}$.
\end{itemize}

\vskip 2mm

$\bullet$ {\bf Step 3}.  We pick
$\lambdab_3:=\left(\frac{1}{6},\frac{3}{2}\right)$. We have $\lfloor
\frac{1}{6} F_1 + \frac{3}{2} F_2 - K_\pi\rfloor=(1,2,1,2,3)$ and
its antinef closure is $D_{\lambdab_3}=(1,2,2,4,6)$, so the region
$\Reg_{\fab}(\lambdab_3)$ is given by the inequalities
$$\begin{array}{lcr}
          6 z_1    +2 z_2 & < &5,\\
          21 z_1   +6 z_2 & < &16.
         \end{array}
$$ The constancy region $\mathcal{C}_{\fab}(\lambdab_3)=\Reg_{\fab}(\lambdab_3)\backslash (
\Reg_{\fab}(\lambdab_0) \cup \Reg_{\fab}(\lambdab_1) \cup \Reg_{\fab}(\lambdab_3)) =
\Reg_{\fab}(\lambdab_3)\backslash\Reg_{\fab}(\lambdab_1)$
has two $\mathcal{C}$-facets for which we pick the interior points
$\left(\frac{1}{6},2\right)$ and
$\left(\frac{23}{42},\frac{3}{4}\right)$ respectively. Then, the
sets $N$ and $D$ are:

\vskip 2mm
\begin{itemize}
\item[$\cdot$]
$N=\{  \left(\frac{10}{21},\frac{1}{2}\right),
\left(\frac{1}{6},2\right),\left(\frac{23}{42},\frac{3}{4}\right)
\}$.
\item[$\cdot$] $D= \{\left(0,0\right), \left(\frac{1}{6},1\right), \left(\frac{17}{42},\frac{1}{4}\right), \left(\frac{1}{6},\frac{3}{2}\right)\}$.
\end{itemize}

\vskip 2mm

\begin{figure}[ht!!]
  \begin{center}
  \medskip
  \includegraphics[width=.32\textwidth]{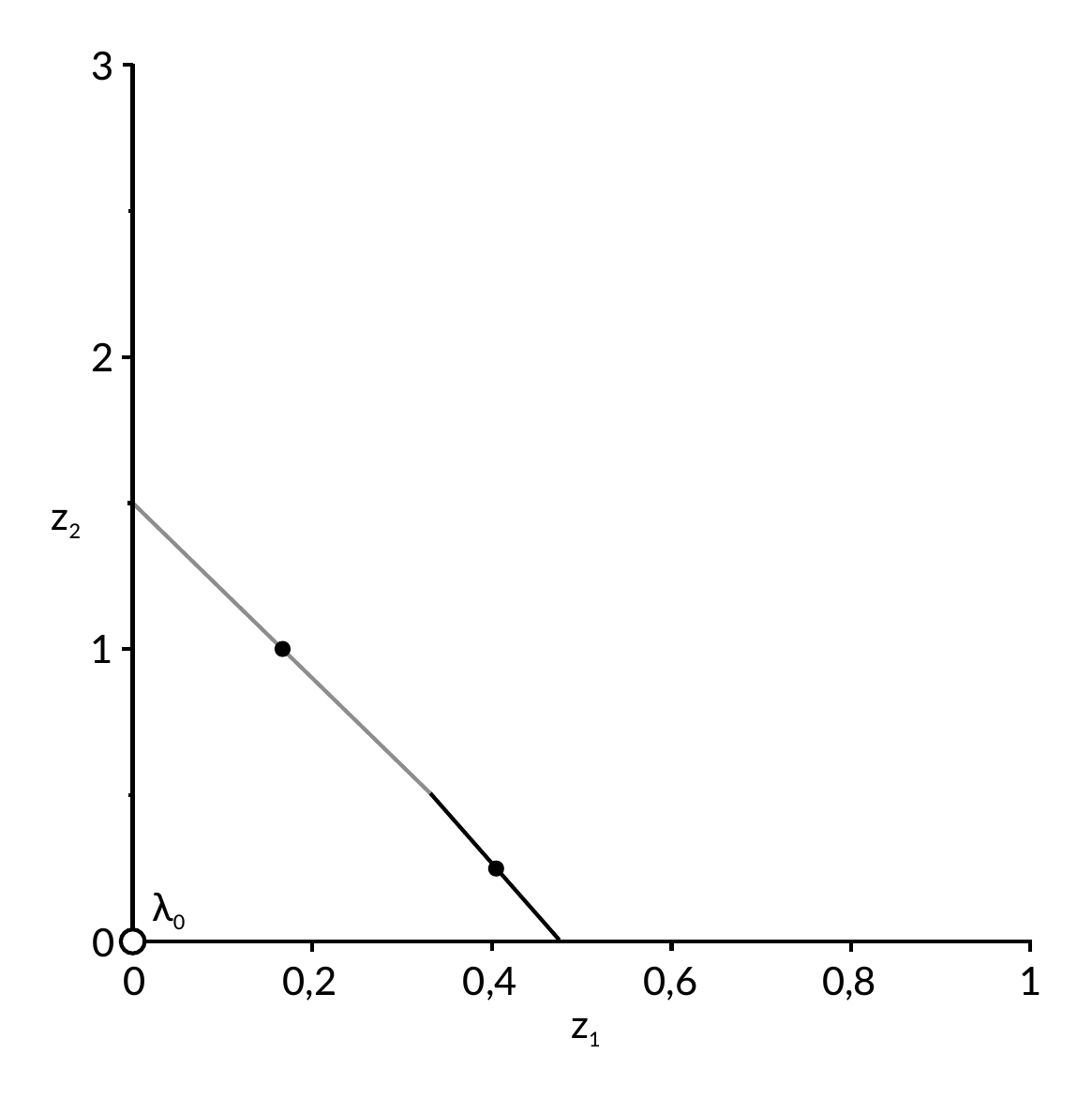}
  \includegraphics[width=.32\textwidth]{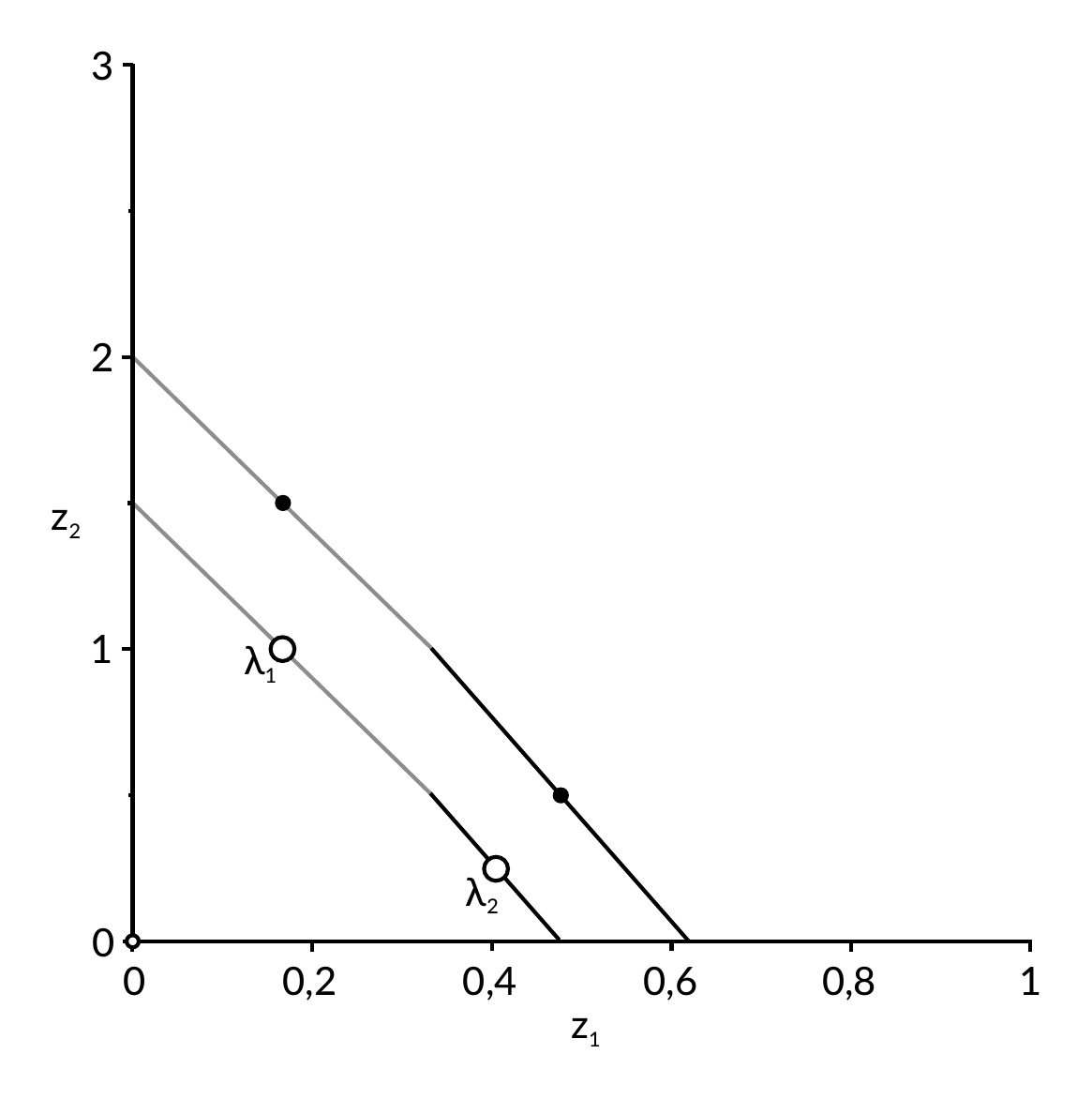}
  \includegraphics[width=.32\textwidth]{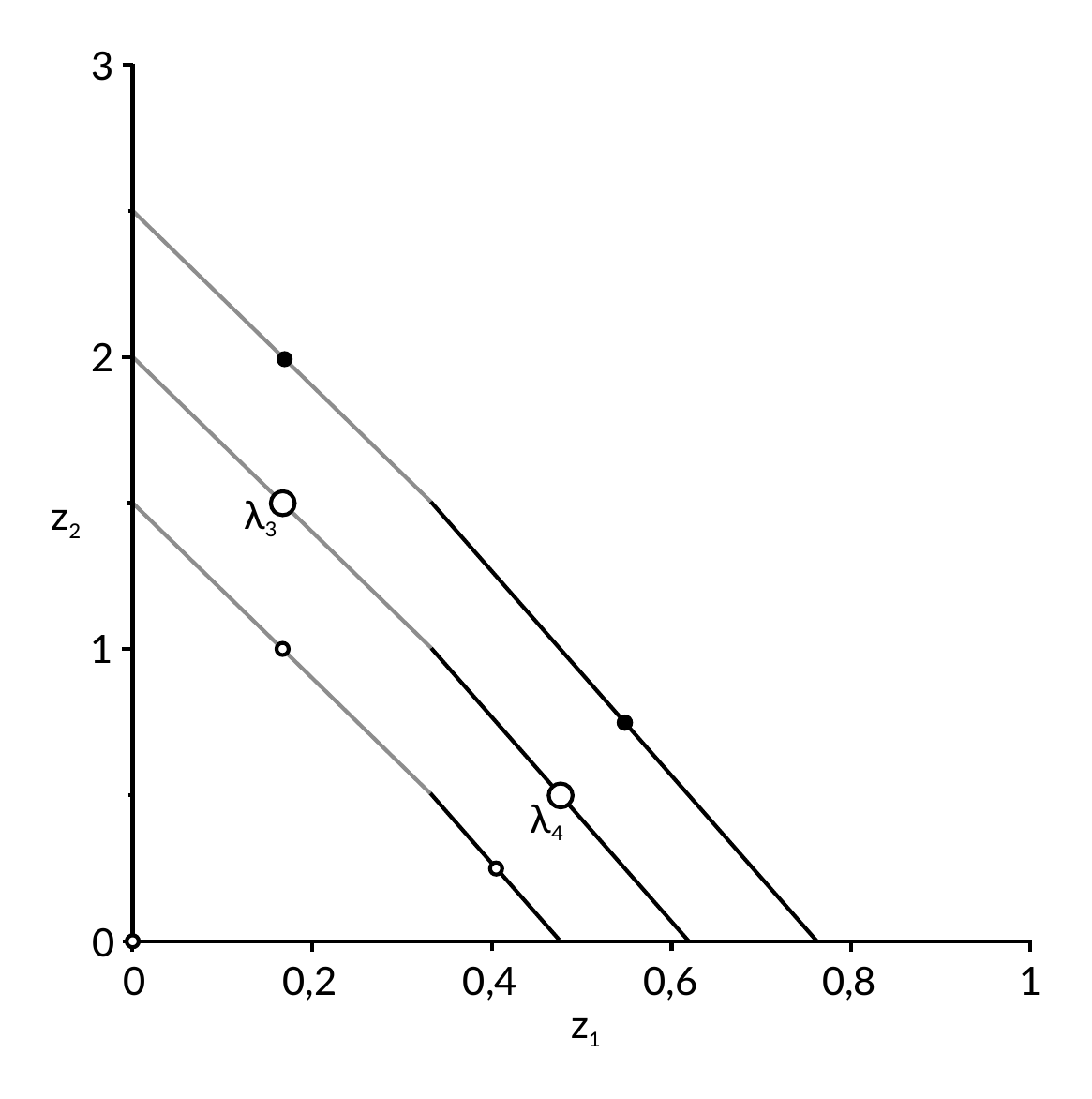}
  \end{center}
\caption{Constancy regions associated to $\lambdab_0$, $\lambdab_1$
(equivalently $\lambdab_2$) and $\lambdab_3$ (equivalently $\lambdab_4$).}
\end{figure}

\vskip 2mm

$\bullet$ {\bf Step 4}. The point
$\lambdab_4:=\left(\frac{10}{21},\frac{1}{2}\right)$ satisfies
$\J(\fab^{\lambdab_4})=\J(\fab^{\lambdab_3})$ so they have the same
region. We update the sets $N$ and $D$ to obtain:

\vskip 2mm
\begin{itemize}
\item[$\cdot$]
$N=\{\left(\frac{1}{6},2\right),\left(\frac{23}{42},\frac{3}{4}\right)
\}$.
\item[$\cdot$] $D= \{\left(0,0\right), \left(\frac{1}{6},1\right), \left(\frac{17}{42},\frac{1}{4}\right),
\left(\frac{1}{6},\frac{3}{2}\right),
\left(\frac{10}{21},\frac{1}{2}\right)\}$.
\end{itemize}

\vskip 2mm

$\bullet$ {\bf Step 5}. We have that the region associated to
$\left(\frac{23}{42},\frac{3}{4}\right)$ is contained  in the region
of $\left(\frac{1}{6},2\right)$. It is for this reason that we will
consider first the point
$\lambdab_5:=\left(\frac{23}{42},\frac{3}{4}\right)$. We have
$\lfloor \frac{23}{42} F_1 + \frac{3}{4} F_2 -
K_\pi\rfloor=(1,2,2,4,7)$ and its antinef closure is
$D_{\lambdab_5}=(1,2,3,5,7)$ so the region $\Reg_{\fab}(\lambdab_3)$
is given by the inequalities
$$\begin{array}{lcr}
          6 z_1    +2 z_2 & < &5,\\
          21 z_1   +6 z_2 & < &17.
         \end{array}
$$ The constancy region $\mathcal{C}_{\fab}(\lambdab_5)=\Reg_{\fab}(\lambdab_5)\backslash\Reg_{\fab}(\lambdab_3)$
has two $\mathcal{C}$-facets for which we pick the interior points
$\left(\frac{1}{2},1\right)$ and
$\left(\frac{31}{42},\frac{1}{4}\right)$ respectively. Then, the
sets $N$ and $D$ are:

\vskip 2mm
\begin{itemize}
\item[$\cdot$]
$N=\{\left(\frac{1}{6},2\right),
\left(\frac{1}{2},1\right),\left(\frac{31}{42},\frac{1}{4}\right)
\}$.
\item[$\cdot$] $D= \{\left(0,0\right), \left(\frac{1}{6},1\right), \left(\frac{17}{42},\frac{1}{4}\right),
\left(\frac{1}{6},\frac{3}{2}\right),
\left(\frac{10}{21},\frac{1}{2}\right),\left(\frac{23}{42},\frac{3}{4}\right)\}$.
\end{itemize}

\vskip 2mm

$\bullet$ {\bf Step 6}. We pick now $\lambdab_6:=\left(\frac{1}{6},2\right)$. We
have $\lfloor \frac{1}{6} F_1 + 2 F_2 - K_\pi\rfloor=(1,3,2,4,6)$
and its antinef closure is $D_{\lambdab_6}=(2,3,3,6,9)$ so the
region $\Reg_{\fab}(\lambdab_6)$ is given by the inequalities
$$\begin{array}{lcr}
          6 z_1    +2 z_2 & < &6,\\
          21 z_1   +6 z_2 & < &19.
         \end{array}
$$ The constancy region $\mathcal{C}_{\fab}(\lambdab_6)=\Reg_{\fab}(\lambdab_6)\backslash\Reg_{\fab}(\lambdab_5)$
has two $\mathcal{C}$-facets for which we pick the interior points
$\left(\frac{1}{6},\frac{5}{2}\right)$ and
$\left(\frac{13}{21},1\right)$. Then, the sets $N$ and $D$ are:

\vskip 2mm
\begin{itemize}
\item[$\cdot$]
$N=\{\left(\frac{1}{2},1\right),\left(\frac{31}{42},\frac{1}{4}\right),
\left(\frac{1}{6},\frac{5}{2}\right)\left(\frac{13}{21},1\right)
\}$.
\item[$\cdot$] $D= \{\left(0,0\right), \left(\frac{1}{6},1\right), \left(\frac{17}{42},\frac{1}{4}\right),
\left(\frac{1}{6},\frac{3}{2}\right),
\left(\frac{10}{21},\frac{1}{2}\right),\left(\frac{23}{42},\frac{3}{4}\right),
\left(\frac{1}{6},2\right)\}$.
\end{itemize}

\vskip 2mm

\begin{figure}[ht!!!!!!!!!!!!!!!!!!!!!!!!!!!!!!!!!!!!!!!!!!]
  \begin{center}
  \medskip
  \includegraphics[width=.3\textwidth]{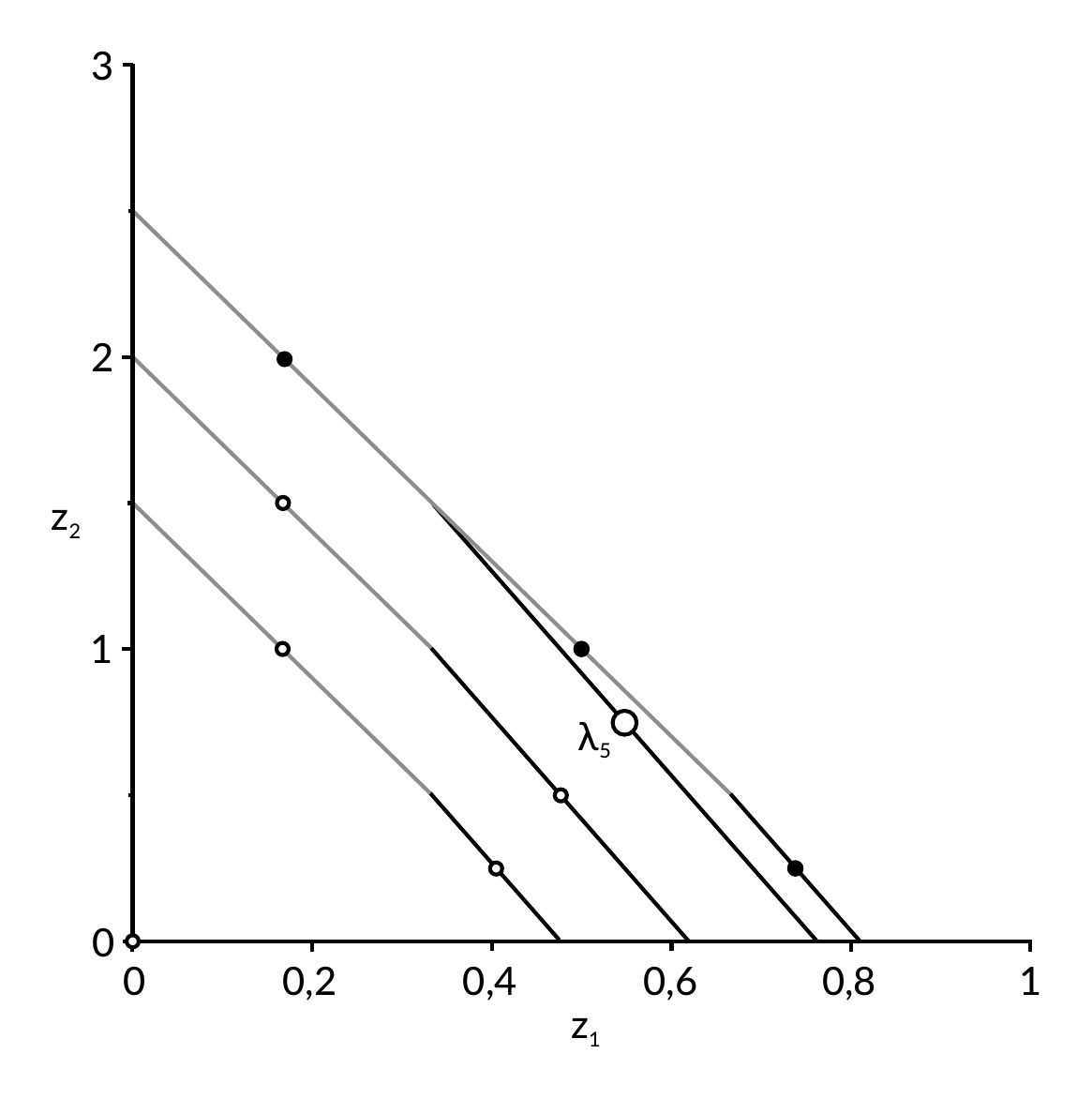}
  \includegraphics[width=.3\textwidth]{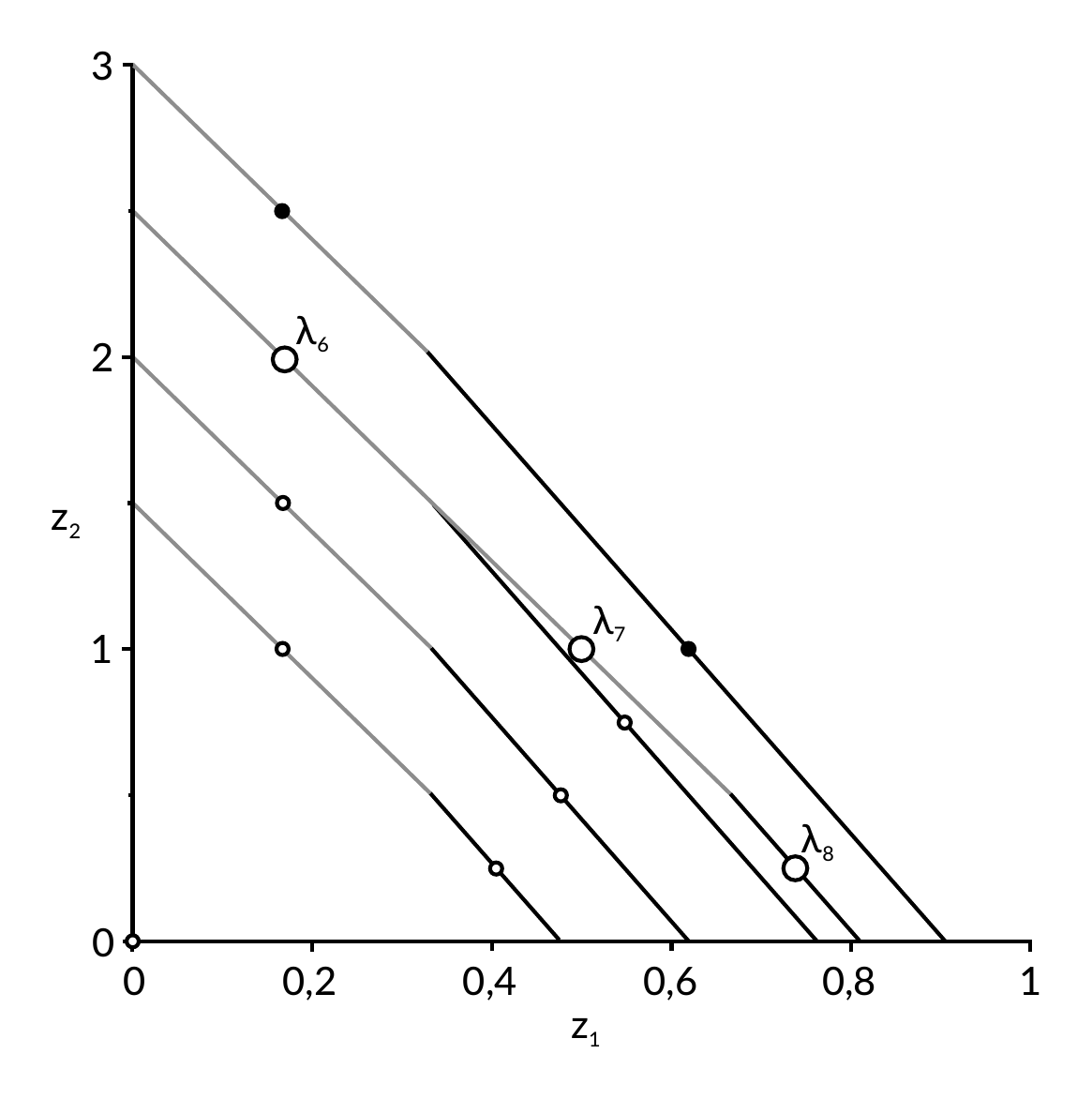}
 \end{center}
\caption{Constancy region associated to $\lambdab_5$
and $\lambdab_6$ (equivalently $\lambdab_7$
and $\lambdab_8$).}
\end{figure}

\vskip 2mm

$\bullet$ {\bf Steps 7 and 8}. The points
$\lambdab_7:=\left(\frac{1}{2},1\right)$ and
$\lambdab_8:=\left(\frac{31}{42},\frac{1}{4}\right)$ satisfy the equality 
$\J(\fab^{\lambdab_8})=\J(\fab^{\lambdab_7})=\J(\fab^{\lambdab_6})$
so they have the same region. We update the sets $N$ and $D$ to
obtain:

\vskip 2mm
\begin{itemize}
\item[$\cdot$]
$N=\{
\left(\frac{1}{6},\frac{5}{2}\right)\left(\frac{13}{21},1\right)
\}$.
\item[$\cdot$] $D= \{\left(0,0\right), \left(\frac{1}{6},1\right), \left(\frac{17}{42},\frac{1}{4}\right),
\left(\frac{1}{6},\frac{3}{2}\right),
\left(\frac{10}{21},\frac{1}{2}\right),\left(\frac{23}{42},\frac{3}{4}\right),
\left(\frac{1}{6},2\right),
\left(\frac{1}{2},1\right),\left(\frac{31}{42},\frac{1}{4}\right)\}$.
\end{itemize}

\end{example}

\section{Jumping divisors}\label{jumping_divisor}

The theory of jumping divisors was introduced in \cite[\S4]{ACAMDC13} in
order to describe the jump between two consecutive multiplier
ideals. The aim of this section is to extend these notions to the
case of mixed multiplier ideals.
More importantly, the theory of jumping divisors is the right framework
that provides the technical results needed in the
proofs of the key results  Theorem \ref{thm:region} and  Proposition \ref{inside_point}.

\vskip 2mm

The proofs of the results that we present in this section are a straightforward extension of
the ones given in \cite[\S4]{ACAMDC13}. However, we include them for completeness.
We  begin with a generalization of the notion of contribution introduced
by Smith and Thompson in \cite{ST07} and further developed by Tucker
in \cite{Tuc10}.

\vskip 2mm

\begin{definition}
Let $\fab:=\left(\fa_1,\ldots,\fa_r\right)\subseteq \left(\cO_{X,O}\right)^r$ be a tuple of ideals, ${\lambdab}
\in \R_{\geqslant0}^r$ a point and $G \leqslant \sum_{i=1}^rF_i$ a  reduced divisor satisfying $\lambda_1 e_{1,i}+\cdots+\lambda_r e_{r,i} - k_i \in \bZ$.
Then it is said that $G$ {\it contributes} to $\lambdab$ if
\[ \pi_{*}\Oc_{X'}(\lceil K_{\pi} -\lambda_1 F_1-\cdots-\lambda_rF_r\rceil+G)\varsupsetneq \J\left({\fab}^{\lambdab}\right)\,.\]
Moreover, this contribution is {\it critical} if for any divisor $0 \leqslant G'< G  $ we have
$$ \pi_{*}\Oc_{X'}(\lceil K_{\pi} -\lambda_1 F_1-\cdots-\lambda_rF_r\rceil+G') = \J({\fab}^{\lambdab}).$$
\end{definition}

\vskip 2mm

The following is the natural extension of \cite[Definition
4.1]{ACAMDC13} to the context of mixed multiplier ideals.

\vskip 2mm

\begin{definition}
 Let ${\lambdab}:=(\lambda_1,\dots,\lambda_r) \in \R_{\geqslant0}^r$ be a jumping point of a tuple of
 ideals $\fab:=\left(\fa_1,\ldots,\fa_r\right)\subseteq \left(\cO_{X,O}\right)^r$. A reduced divisor
 $G \leqslant \sum_{i=1}^r F_i$ for which any $E_j\leqslant G$
 satisfies
\[\lambda_1 e_{1,j}+ \cdots +  \lambda_r e_{r,j} - k_j \in \bZ_{>0}\]
 is called a {\it jumping divisor} for $\lambdab$ if
 \[\J({\fab}^{\lambdapb})= \pi_{*}\Oc_{X'}(\lceil K_\pi- \lambda_1 F_1 - \cdots -\lambda_r F_r \rceil +G)\,,\]
 for any $\lambdapb \in  \{\lambdab - \R_{\geqslant0}^r\} \cap
B_{\varepsilon}(\lambdab)$ for $\varepsilon$ small enough. We say that a jumping divisor is minimal if no proper subdivisor is a
jumping divisor for $\lambdab$, i.e.,
\[\J({\fab}^{\lambdapb})\varsupsetneq \pi_{*}\Oc_{X'}(\lceil K_\pi- \lambda_1 F_1 - \cdots -\lambda_r F_r\rceil+G')\]
for any $ 0\leqslant G'< G $ and for any $\lambdapb \in  \{\lambdab - \R_{\geqslant0}^r\} \cap B_{\varepsilon}(\lambdab)$ for $\varepsilon>0$ sufficiently small.

\end{definition}

\vskip 2mm

Among all jumping divisors we will single out the {\it minimal jumping divisor}
that is constructed following closely Algorithm \ref{A2}.

\begin{definition}
Let $\fab:=\left(\fa_1,\ldots,\fa_r\right)\subseteq
\left(\cO_{X,O}\right)^r$ be a tuple of ideals.
 Given a jumping point $\lambdab \in \R^r_{\geqslant0}$, the corresponding {\it minimal jumping divisor} is the reduced divisor ${G_{\lambda} \leqslant
\sum_{i=1}^r F_i}$ supported on those components $E_j$ for which the
point $\lambdab$ satisfies
$$ \lambda_1  e_{1,j}+ \cdots +   \lambda_r e_{r,j} = k_j + 1 +
e_j^{(1-\varepsilon){\lambdab}}\,,$$ where, for a sufficiently small $\varepsilon>0$,
$D_{(1-\varepsilon)\lambdab} = \sum e_j^{(1-\varepsilon){\lambdab}}
\hskip 1mm E_j$ is the antinef closure of $$\lfloor
(1-\varepsilon){\lambda_1} F_1 + \cdots + (1-\varepsilon){\lambda_r}
F_r - K_\pi\rfloor.$$

\end{definition}

\begin{remark} \label{minimal_hyperplanes}
A jumping point $\lambdab$ is contained in some $\mathcal{C}$-facets of $\cC_{\fab}((1-\varepsilon)\lambdab)$.
The exceptional components $E_j$  such that $H_j: \hskip 1mm \lambda_1  e_{1,j}+ \cdots +   \lambda_r e_{r,j} = k_j + 1 +
e_j^{(1-\varepsilon){\lambdab}}$ are the supporting hyperplanes of these $\mathcal{C}$-facets  are
precisely the components of the minimal jumping divisor
$G_{\lambdab}$.
\end{remark}

\begin{remark} \label{min_leq_max}
For $\varepsilon >0$ small enough we have
$$ G_{\lambdab} \leqslant \lceil K_\pi-(1-\varepsilon)\lambda_1 F_1-\cdots -(1-\varepsilon)\lambda_r F_r\rceil -
\lceil K_\pi-\lambda_1 F_1-\cdots-\lambda_r F_r \rfloor.$$

\end{remark}

The minimal jumping divisor  is not only related to a jumping point, indeed
we can associate it to the interior of each $\cC$-facet.

\begin{lemma}\label{lem:Constancy_facet}
The interior points of a $\cC$-facet have the same minimal jumping divisor.
\end{lemma}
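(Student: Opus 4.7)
The plan is to describe $G_{\lambdab}$, for any interior point $\lambdab$ of a fixed $\cC$-facet, as the reduced divisor supported on exactly those components $E_j$ whose associated hyperplane $H_j^{\mub}: e_{1,j} z_1+\cdots + e_{r,j} z_r = k_j +1 +e_j^{\mub}$ coincides with the supporting hyperplane $H$ of the $\cC$-facet. Here $\mub$ denotes any representative of the constancy region having this $\cC$-facet as part of its outer boundary. Since such a description depends only on the $\cC$-facet and on the antinef divisor $D_\mub=\sum e_j^\mub E_j$, two interior points of the same $\cC$-facet must have the same minimal jumping divisor.

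First I would fix two interior points $\lambdab, \lambdapb$ of the $\cC$-facet. Applying part (iii) of the preceding lemma to each of them, for sufficiently small $\varepsilon > 0$ one has $\J(\fab^{(1-\varepsilon)\lambdab})=\J(\fab^{(1-\varepsilon)\lambdapb})=\J(\fab^{\mub})$, so the antinef closures agree: $D_{(1-\varepsilon)\lambdab}=D_{(1-\varepsilon)\lambdapb}=D_\mub$, and in particular $e_j^{(1-\varepsilon)\lambdab}=e_j^{(1-\varepsilon)\lambdapb}=e_j^{\mub}$ for every exceptional component $E_j$. With this common identification, the definition of minimal jumping divisor expresses $G_{\lambdab}$ (respectively $G_{\lambdapb}$) as the reduced divisor supported on those $E_j$ for which $\lambdab \in H_j^\mub$ (respectively $\lambdapb \in H_j^\mub$).

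The main obstacle is to verify that an interior point $\lambdab$ of the $\cC$-facet lies on $H_j^{\mub}$ if and only if $H_j^{\mub}=H$. The ``if'' direction is immediate, since $\lambdab \in H$. For the converse, suppose that $H_j^{\mub}$ were a supporting hyperplane of $\cR_{\fab}(\mub)$ distinct from $H$ but still containing $\lambdab$; then $\lambdab$ would lie in the codimension-one subset $H \cap H_j^{\mub}$ of $H$. Any relative neighborhood of $\lambdab$ in $H$ would then contain points violating the strict inequality $e_{1,j} z_1+\cdots + e_{r,j} z_r < k_j +1 +e_j^{\mub}$ from Theorem \ref{thm:region}, hence points lying outside $\overline{\cR_{\fab}(\mub)}$. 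Since the $\cC$-facet is contained in $\overline{\cC_{\fab}(\mub)}\subseteq \overline{\cR_{\fab}(\mub)}$, this contradicts $\lambdab$ being a relative interior point of the $\cC$-facet. The same reasoning applies to $\lambdapb$, so $G_{\lambdab}$ and $G_{\lambdapb}$ are both supported on the set $\{E_j \mid H_j^{\mub} = H\}$ and therefore coincide.
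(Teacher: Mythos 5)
Your proof is correct and takes essentially the same route as the paper, which deduces the lemma directly from Remark \ref{minimal_hyperplanes}: what you prove in detail is exactly that characterization, namely that after identifying $D_{(1-\varepsilon)\lambdab}=D_{\mub}$ the components of $G_{\lambdab}$ are precisely those $E_j$ whose hyperplane $H_j^{\mub}$ equals the supporting hyperplane $H$ of the $\cC$-facet, a datum depending only on the facet. Two small precisions: the strict inequality $e_{1,j}z_1+\cdots+e_{r,j}z_r<k_j+1+e_j^{\mub}$ on $\cR_{\fab}(\mub)$ holds for \emph{every} exceptional component $E_j$ by Corollary \ref{cor:semicontMMI} (Theorem \ref{thm:region} only lists the rupture/dicritical ones), which is the form you need, and your exclusion argument should be phrased for any hyperplane $H_j^{\mub}\neq H$ through $\lambdab$, not only supporting ones --- as your neighborhood argument in fact already shows.
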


\begin{proof}
This is a direct consequence of Remark \ref{minimal_hyperplanes}.
\end{proof}

\vskip 2mm

 We will prove next that $G_{\lambdab}$ is a jumping divisor
and deserves its name:

\begin{proposition}
 Let $\lambdab$ be a jumping point of a tuple of ideals $\fab:=\left(\fa_1,\ldots,\fa_r\right)\subseteq \left(\cO_{X,O}\right)^r$.
 Then the reduced divisor $G_{\lambdab}$ is a jumping divisor.
\end{proposition}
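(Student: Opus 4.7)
The plan is to show that the divisor $D_0 := \lfloor \lambda_1 F_1 + \cdots + \lambda_r F_r - K_\pi \rfloor - G_{\lambdab}$ has the same associated complete ideal as $D_{(1-\varepsilon)\lambdab}$, the antinef closure of $D_1 := \lfloor (1-\varepsilon)\lambda_1 F_1 + \cdots + (1-\varepsilon)\lambda_r F_r - K_\pi \rfloor$. Indeed, the negative-orthant properties from Section~2 guarantee that $\J(\fab^{\lambdapb})$ is constant on $\{\lambdab - \R^r_{\geqslant 0}\} \cap B_{\varepsilon}(\lambdab)$ for $\varepsilon > 0$ small enough and equals $\pi_* \Oc_{X'}(-D_{(1-\varepsilon)\lambdab})$, while the candidate ideal $\pi_{*}\Oc_{X'}(\lceil K_\pi - \lambda_1 F_1 - \cdots - \lambda_r F_r\rceil + G_{\lambdab})$ is exactly $\pi_{*}\Oc_{X'}(-D_0)$. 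By Proposition~\ref{semicont}(i), equality of these two ideals reduces to checking (a) $D_0 \leqslant D_{(1-\varepsilon)\lambdab}$ and (b) $\widetilde{D_0} \geqslant D_{(1-\varepsilon)\lambdab}$.

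For (a), I would use that $\lambdab$ is a jumping point on the boundary of $\cR_{\fab}((1-\varepsilon)\lambdab)$. Since $\lambdab$ is a limit of the points $(1-t)\lambdab \in \cR_{\fab}((1-\varepsilon)\lambdab)$ as $t \to 0^{+}$, Corollary~\ref{cor:semicontMMI} together with continuity yields, for every exceptional component $E_j$,
\[ v_j(\lambdab) := \lambda_1 e_{1,j} + \cdots + \lambda_r e_{r,j} \;\leqslant\; k_j + 1 + e_j^{(1-\varepsilon)\lambdab}, \]
with equality precisely when $E_j \leqslant G_{\lambdab}$, by the definition of the minimal jumping divisor. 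A component-by-component comparison then shows that on $E_j \leqslant G_{\lambdab}$ the coefficient of $E_j$ in $D_0$ is $\lfloor v_j(\lambdab) - k_j\rfloor - 1 = e_j^{(1-\varepsilon)\lambdab}$, matching the coefficient in $D_{(1-\varepsilon)\lambdab}$; while on $E_j \not\leqslant G_{\lambdab}$ the strict inequality $v_j(\lambdab) - k_j < 1 + e_j^{(1-\varepsilon)\lambdab}$ forces $\lfloor v_j(\lambdab) - k_j\rfloor \leqslant e_j^{(1-\varepsilon)\lambdab}$.

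For (b), I would establish the sandwich $D_1 \leqslant D_0 \leqslant \widetilde{D_0}$ componentwise: on $E_j \leqslant G_{\lambdab}$ the quantity $v_j(\lambdab) - k_j = 1 + e_j^{(1-\varepsilon)\lambdab}$ is integral, so for $\varepsilon > 0$ small enough $\lfloor (1-\varepsilon) v_j(\lambdab) - k_j \rfloor = e_j^{(1-\varepsilon)\lambdab}$, which matches $D_0$; on $E_j \not\leqslant G_{\lambdab}$ monotonicity of the floor gives $\lfloor (1-\varepsilon) v_j(\lambdab) - k_j \rfloor \leqslant \lfloor v_j(\lambdab) - k_j \rfloor$. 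Since $\widetilde{D_0}$ is antinef and $\widetilde{D_0} \geqslant D_1$, the minimality characterization of the antinef closure of $D_1$ yields $\widetilde{D_0} \geqslant D_{(1-\varepsilon)\lambdab}$. Combined with (a), we obtain $\widetilde{D_0} = D_{(1-\varepsilon)\lambdab}$, and Proposition~\ref{semicont}(i) concludes the proof.

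The main delicacy is choosing a single $\varepsilon > 0$ that simultaneously validates every floor identity used above across the finitely many exceptional components; concretely, $\varepsilon$ must be small enough that no fractional part of any linear form $v_j$ wraps around, and that $(1-\varepsilon)\lambdab$ remains in a fixed constancy region as $\varepsilon$ decreases. Since there are only finitely many such constraints and each defines an open interval for $\varepsilon$, a valid uniform $\varepsilon$ exists.
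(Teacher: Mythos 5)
Your proof is correct and follows essentially the same route as the paper: both arguments come down to the componentwise inequalities $\lfloor (1-\varepsilon)\lambda_1 F_1+\cdots+(1-\varepsilon)\lambda_r F_r - K_\pi\rfloor \leqslant \lfloor \lambda_1 F_1+\cdots+\lambda_r F_r - K_\pi\rfloor - G_{\lambdab} \leqslant D_{(1-\varepsilon)\lambdab}$, verified exactly as you do by separating the cases $E_j \leqslant G_{\lambdab}$ (where $v_j(\lambdab)-k_j$ is integral) and $E_j \not\leqslant G_{\lambdab}$. The only difference is cosmetic: you package the conclusion through Proposition~\ref{semicont}(i) and the minimality of the antinef closure, whereas the paper converts each of the two inequalities directly into an ideal containment, but the substance is identical.
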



\begin{proof}
Using Remark \ref{min_leq_max} we have
$$\lceil K_\pi-(1-\varepsilon)\lambda_1 F_1-\cdots -(1-\varepsilon)\lambda_r F_r\rceil  \geqslant
\lceil K_\pi-\lambda_1 F_1-\cdots-\lambda_r F_r \rfloor+G_{\lambdab} $$ for a sufficiently small $\varepsilon>0$ and therefore
$$\J({\fab}^{(1 -\varepsilon)\lambdab})\supseteq \pi_{*}\Oc_{X'}(\lceil K_\pi- \lambda_1 F_1-\cdots-\lambda_r F_r\rceil +G_{\lambdab}).$$

\vskip 2mm

For the reverse inclusion, let ${D}_{(1-\varepsilon)\lambdab}=\sum
e_i^{(1-\varepsilon)\lambdab} E_i$ be the antinef closure of
$$\lfloor(1-\varepsilon)\lambda_1 F_1+\cdots+(1-\varepsilon)\lambda_r F_r - K_\pi\rfloor.$$ We want to check
that $\lfloor\lambda_1 F_1 + \cdots + \lambda_r F_r - K_\pi\rfloor-G_{\lambdab} \leqslant
{D}_{(1-\varepsilon)\lambdab}$. For this purpose we consider two
 cases.
\begin{itemize}
 \item[$\cdot$] If $E_i \leqslant G_{\lambdab}$  then we have  $-k_i + \lambda_1 e_{1,i}+\cdots + \lambda_r e_{r,i} = 1+e_i^{(1-\varepsilon)\lambdab}$.
 And, in particular  \[ \lfloor \lambda_1 e_{1,i}+\cdots +\lambda_r e_{r,i} -k_i\rfloor -1 =  e_i^{(1-\varepsilon)\lambdab}\,.\]
 \item[$\cdot$] If  $E_i \not\leqslant G_{\lambdab}$ then we have $-k_i + \lambda_1 e_{1,i}+\cdots + \lambda_r e_{r,i} < 1+e_i^{(1-\varepsilon)\lambdab}$.
 Thus \[{ \lfloor \lambda_1 e_{1,i}+\cdots +\lambda_r e_{r,i} -k_i\rfloor < 1 + e_i^{(1-\varepsilon)\lambdab}}\] and the
 result follows.
\end{itemize}
 \end{proof}

 \begin{theorem} \label{thm:jump1MMI}
Let $  \lambdab$ be a jumping point of a tuple of ideals $\fab:=\left(\fa_1,\ldots,\fa_r\right)\subseteq \left(\cO_{X,O}\right)^r$. Any reduced contributing divisor $G\leqslant \sum_{i=1}^rF_i$ associated to
$\lambdab$ satisfies either:

\begin{itemize}
 \item[$\cdot$] $\J\left({\fab}^{(1 -\varepsilon)\lambdab}\right)= \pi_{*}\Oc_{X'}(\lceil K_\pi- \lambda_1 F_1-\cdots - \lambda_r F_r\rceil +G)\varsupsetneq \J({\fab}^{\lambdab})$
 if and only if $G_{{\lambdab}}\leqslant G$, or

  \item[$\cdot$] $\J\left({\fab}^{(1 -\varepsilon)\lambdab}\right)\varsupsetneq \pi_{*}\Oc_{X'}(\lceil K_\pi- \lambda_1 F_1-\cdots - \lambda_r F_r\rceil+G)\varsupsetneq \J({\fab}^{\lambdab})$ otherwise.
\end{itemize}
\end{theorem}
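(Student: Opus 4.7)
The plan is to encode the three ideals in the statement by divisors on $X'$ and to apply Proposition~\ref{semicont} to decide which inclusions are strict, separating according to whether $G_{\lambdab} \leqslant G$. Set
\[
A := \lfloor (1-\varepsilon)(\lambda_1 F_1 + \cdots + \lambda_r F_r) - K_\pi \rfloor, \qquad B := \lfloor \lambda_1 F_1 + \cdots + \lambda_r F_r - K_\pi \rfloor - G,
\]
so that $\J(\fab^{(1-\varepsilon)\lambdab}) = \pi_*\Oc_{X'}(-A)$ while the middle ideal of the theorem equals $\pi_*\Oc_{X'}(-B)$.

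First I would establish the universal inclusion $\pi_*\Oc_{X'}(-B) \subseteq \J(\fab^{(1-\varepsilon)\lambdab})$ by checking $A \leqslant B$ componentwise for $\varepsilon > 0$ sufficiently small. The verification splits at each $E_j$ in the support of some $F_i$ into three subcases ($E_j \leqslant G$; $E_j \not\leqslant G$ with $\sum_i \lambda_i e_{i,j} - k_j$ integer; $E_j \not\leqslant G$ with this quantity non-integer), and the contributing hypothesis forces integrality on the support of $G$, keeping each subcase routine.

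In the case $G_{\lambdab} \leqslant G$, the monotonicity $G \geqslant G_{\lambdab}$ yields
\[
\pi_*\Oc_{X'}(\lceil K_\pi - \textstyle\sum_i \lambda_i F_i \rceil + G) \supseteq \pi_*\Oc_{X'}(\lceil K_\pi - \textstyle\sum_i \lambda_i F_i \rceil + G_{\lambdab}) = \J(\fab^{(1-\varepsilon)\lambdab}),
\]
where the last equality uses that $G_{\lambdab}$ itself is a jumping divisor, as established earlier in the section. Combined with the universal inclusion this forces equality, and the strict inclusion over $\J(\fab^{\lambdab})$ is automatic since $G$ contributes.

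In the case $G_{\lambdab} \not\leqslant G$, pick $E_j \leqslant G_{\lambdab}$ with $E_j \not\leqslant G$. From the defining equation of the minimal jumping divisor, the antinef closure $\widetilde{A} = D_{(1-\varepsilon)\lambdab}$ satisfies $e_j^{(1-\varepsilon)\lambdab} = \sum_i \lambda_i e_{i,j} - k_j - 1$, while $v_j(B) = \sum_i \lambda_i e_{i,j} - k_j$ by the componentwise analysis of the first paragraph (integrality is granted because $E_j \leqslant G_{\lambdab}$). Thus $v_j(\widetilde{A}) < v_j(B)$, and Proposition~\ref{semicont}(ii) upgrades the universal inclusion to a strict one. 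The main obstacle I anticipate is the componentwise bookkeeping of $A$ against $B$ in the first step: one needs to take $\varepsilon$ small enough that no floor jumps unexpectedly, and to treat exceptional and affine components of $G$ uniformly. Once this is in place, identifying $e_j^{(1-\varepsilon)\lambdab}$ from the defining equation of $G_{\lambdab}$ is the clean algebraic input that closes the second case.
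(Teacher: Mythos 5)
Your proof is correct and follows essentially the same route as the paper: the universal inclusion via the componentwise bound $A\leqslant B$ (which the paper phrases as $G\leqslant H_{\lambdab}$), equality when $G_{\lambdab}\leqslant G$ by comparing to the jumping divisor $G_{\lambdab}$, and strictness when $G_{\lambdab}\not\leqslant G$ via the coefficient comparison $v_j(\widetilde{A})<v_j(B)$ at a component $E_j\leqslant G_{\lambdab}$, $E_j\not\leqslant G$, using Proposition~\ref{semicont}. The two bullets are mutually exclusive and exhaustive, so the if-and-only-if in the first bullet follows from the strictness in the second, exactly as in the paper.
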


\begin{proof}
Since $G \leqslant H_{\lambdab}$, we have $$\lfloor (1 -\varepsilon)\lambda_1
F_1+\cdots+ (1 -\varepsilon)\lambda_r F_r - K_\pi\rfloor  \leqslant \lfloor\lambda_1 F_1+\cdots+\lambda_r F_r - K_\pi\rfloor-G $$ and
therefore
$$\J({\A}^{(1 -\varepsilon)\lambdab})\supseteq \pi_{*}\Oc_{X'}(\lceil K_\pi- \lambda_1 F_1-\cdots-\lambda_rF_r\rceil +G).$$

\vskip 2mm

Now assume $G_{\lambdab}\leqslant G$. Then $\lfloor\lambda_1 F_1+\cdots+\lambda_rF_r -
K_\pi\rfloor-G \leqslant \lfloor\lambda_1F_1+\cdots+\lambda_rF_r - K_\pi\rfloor-G_{\lambdab}$,
and using the fact that $G_{\lambdab}$ is a jumping divisor we obtain
the equality $$\J\left({\fab}^{(1-\varepsilon)\lambdab}\right)=
\pi_{*}\Oc_{X'}(\lceil K_\pi- \lambda_1 F_1+\cdots+\lambda_rF_r\rceil +G).$$

\vskip 2mm

If $G_{\lambdab} \not\leqslant G$, we may consider a component
$E_i\leqslant G_{\lambdab}$ such that $E_i\not\leqslant G$. Notice
that we have
\begin{align*}
 v_i({D}_{(1-\varepsilon)\lambdab}) & =e_i^{(1-\varepsilon)\lambdab}= \lambda_1 e_{1,i}+\cdots+\lambda_r e_{r,i}-k_i-1 \\
 &< \lambda_1 e_{1,i}+\cdots+\lambda_r e_{r,i}-k_i = v_i(\lfloor\lambda_1 F_1+\cdots+\lambda_rF_r - K_\pi\rfloor-G)\,,
\end{align*}

\noindent where ${D}_{(1-\varepsilon)\lambdab}=\sum e_i^{(1-\varepsilon)\lambda} E_i$
is the antinef closure of $\lfloor (1-\varepsilon)\lambda_1 F_1+\cdots+ (1-\varepsilon)\lambda_r F_r -
K_\pi\rfloor$. Therefore, by Corollary \ref{semicont}, we get the
strict inclusion
$$\J({\A}^{(1-\varepsilon)\lambdab })\varsupsetneq \pi_{*}\Oc_{X'}(\lceil K_\pi- \lambda_1F_1-\cdots-\lambda_rF_r\rceil+G).$$
\end{proof}

From this result we deduce the unicity of the minimal jumping divisor.

\begin{corollary}\label{cor:unic_min_jdMMI}
Let $  \lambdab$ be a jumping point of a tuple of ideals $\fab:=\left(\fa_1,\ldots,\fa_r\right)\subseteq \left(\cO_{X,O}\right)^r$. Then  $G_{\lambdab}$ is the unique minimal jumping divisor associated to $\lambdab$.
\end{corollary}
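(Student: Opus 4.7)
The plan is to derive the corollary as a direct consequence of Theorem \ref{thm:jump1MMI}, which already classifies all reduced contributing divisors $G\leqslant \sum_{i=1}^r F_i$ at the jumping point $\lambdab$ according to whether or not they contain $G_{\lambdab}$. The previous proposition has established that $G_{\lambdab}$ itself is a jumping divisor, so the only remaining tasks are to verify minimality of $G_{\lambdab}$ and uniqueness of any minimal jumping divisor with this property.

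First, I would check minimality. Suppose $G' < G_{\lambdab}$ were a jumping divisor. Since $G_{\lambdab} \not\leqslant G'$, the second case of Theorem \ref{thm:jump1MMI} applies and gives the strict inclusion
\[
\J\bigl({\fab}^{(1-\varepsilon)\lambdab}\bigr) \varsupsetneq \pi_{*}\Oc_{X'}\bigl(\lceil K_\pi - \lambda_1 F_1 - \cdots - \lambda_r F_r\rceil + G'\bigr),
\]
contradicting the definition of jumping divisor. Hence no proper subdivisor of $G_{\lambdab}$ is a jumping divisor, so $G_{\lambdab}$ is indeed minimal.

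Next I would address uniqueness. Let $G$ be any minimal jumping divisor associated to $\lambdab$. Being a jumping divisor means
\[
\J\bigl({\fab}^{(1-\varepsilon)\lambdab}\bigr) = \pi_{*}\Oc_{X'}\bigl(\lceil K_\pi - \lambda_1 F_1 - \cdots - \lambda_r F_r\rceil + G\bigr),
\]
which by the dichotomy in Theorem \ref{thm:jump1MMI} forces $G_{\lambdab} \leqslant G$. Since $G_{\lambdab}$ is itself a jumping divisor and $G$ is minimal, we must have $G = G_{\lambdab}$, establishing uniqueness.

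I do not anticipate a genuine obstacle here; the only subtlety is ensuring that $G'$ (respectively $G$) in each step is a reduced divisor bounded by $\sum_{i=1}^r F_i$ so that Theorem \ref{thm:jump1MMI} is applicable, which is automatic from the definition of jumping divisor. Thus the corollary should follow in a few lines.
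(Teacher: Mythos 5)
Your argument is correct and matches the deduction the paper intends when it says the corollary follows from Theorem~\ref{thm:jump1MMI}: the dichotomy there forces $G_{\lambdab}\leqslant G$ for any jumping divisor $G$, while the second case rules out proper subdivisors of $G_{\lambdab}$, giving both minimality and uniqueness. The one prerequisite you correctly flag --- that any jumping divisor is in particular a reduced contributing divisor bounded by $\sum_i F_i$, so Theorem~\ref{thm:jump1MMI} applies --- is indeed automatic from the definitions.
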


\vskip 2mm

The minimal jumping divisor also allows to describe the jump of mixed multiplier ideals in the other
direction, although in this case we do not have minimality for the jump.

\vskip 2mm

\begin{proposition} \label{prop:jump2_MMI}
Let $\lambdab$ be a jumping point of a tuple of ideals $\fab \subseteq (\cO_{X,O})^r$ and $D_{(1-\varepsilon)\lambdab}$ be the
antinef closure of $\lfloor(1-\varepsilon)\lambda_1 F_1+\cdots+(1-\varepsilon)\lambda_rF_r - K_\pi \rfloor$. Then we
have:

\begin{itemize}
 \item[i)]  $\J(\fab^{(1-\varepsilon)\lambdab})\varsupsetneq \pi_{*}\Oc_{X'}(-D_{(1-\varepsilon)\lambdab}-G_{\lambda})= \J(\fab^{\lambdab})$.

 \item[ii)] $\J(\fab^{(1-\varepsilon)\lambdab})\varsupsetneq \pi_{*}\Oc_{X'}(\lceil K_\pi -(1-\varepsilon)\lambda_1 F_1-\cdots-(1-\varepsilon)\lambda_rF_r\rceil - G_{\lambda})= \J(\fab^{\lambdab})$
\end{itemize}

\end{proposition}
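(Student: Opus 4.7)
Set $B = \lfloor \lambda_1 F_1 + \cdots + \lambda_r F_r - K_\pi \rfloor$, so that $\J(\fab^{\lambdab}) = \pi_*\Oc_{X'}(-B) = \pi_*\Oc_{X'}(-D_{\lambdab})$, where $D_{\lambdab}$ is the antinef closure of $B$. Writing $A = D_{(1-\varepsilon)\lambdab} + G_{\lambda}$ and $A' = \lfloor (1-\varepsilon)\lambda_1 F_1 + \cdots + (1-\varepsilon)\lambda_r F_r - K_\pi \rfloor + G_{\lambda}$, the two items to prove read $\J(\fab^{(1-\varepsilon)\lambdab}) \varsupsetneq \pi_*\Oc_{X'}(-A) = \J(\fab^{\lambdab})$ and $\J(\fab^{(1-\varepsilon)\lambdab}) \varsupsetneq \pi_*\Oc_{X'}(-A') = \J(\fab^{\lambdab})$. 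The plan is to prove (i) via two applications of Proposition~\ref{semicont}, and then to deduce (ii) by showing $\pi_*\Oc_{X'}(-A') = \pi_*\Oc_{X'}(-A)$.

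\textbf{Strict inclusion in (i).} The divisor $D_{(1-\varepsilon)\lambdab}$ is antinef, hence its own antinef closure, and for every $E_j \leqslant G_{\lambda}$ one has $v_j(D_{(1-\varepsilon)\lambdab}) < v_j(D_{(1-\varepsilon)\lambdab}) + 1 = v_j(A)$. Proposition~\ref{semicont}(ii) then yields $\J(\fab^{(1-\varepsilon)\lambdab}) \varsupsetneq \pi_*\Oc_{X'}(-A)$.

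\textbf{Equality in (i) and reduction of (ii).} The coefficient-wise comparison splits in two cases. If $E_j \leqslant G_{\lambda}$, the defining relation $\lambda_1 e_{1,j} + \cdots + \lambda_r e_{r,j} = k_j + 1 + e_j^{(1-\varepsilon)\lambdab}$ yields $v_j(A) = v_j(A') = v_j(B)$. If $E_j \not\leqslant G_{\lambda}$, the definition of $G_{\lambda}$ combined with the antinef bound $e_j^{(1-\varepsilon)\lambdab} \geqslant \lfloor (1-\varepsilon)(\lambda_1 e_{1,j} + \cdots + \lambda_r e_{r,j}) - k_j \rfloor$ forces $v_j(B) \leqslant e_j^{(1-\varepsilon)\lambdab} = v_j(A)$, while the monotonicity of antinef closures gives $v_j(A) \leqslant v_j(D_{\lambdab})$. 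One thus obtains the chain $B \leqslant A \leqslant D_{\lambdab}$. Taking antinef closures, $\widetilde{A} \leqslant D_{\lambdab}$ (since $D_{\lambdab}$ is antinef and $\geqslant A$), while $\widetilde{A} \geqslant \widetilde{B} = D_{\lambdab}$ by monotonicity, whence $\widetilde{A} = D_{\lambdab}$. Proposition~\ref{semicont}(i) then delivers $\pi_*\Oc_{X'}(-A) = \pi_*\Oc_{X'}(-D_{\lambdab}) = \J(\fab^{\lambdab})$. For (ii), the same case analysis shows $A' \leqslant A$; combining $\widetilde{A'} \geqslant A'$ (so $\widetilde{A'} \geqslant A$ at $E_j \leqslant G_{\lambda}$) with $\widetilde{A'} \geqslant D_{(1-\varepsilon)\lambdab}$ (so $\widetilde{A'} \geqslant A$ at $E_j \not\leqslant G_{\lambda}$) produces $\widetilde{A'} \geqslant A$, and Proposition~\ref{semicont}(i) gives $\pi_*\Oc_{X'}(-A') = \pi_*\Oc_{X'}(-A)$. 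Both claims of (ii) follow at once.

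\textbf{Main obstacle.} The most delicate point in the coefficient comparison is the case $E_j \not\leqslant G_{\lambda}$ with $\lambda_1 e_{1,j} + \cdots + \lambda_r e_{r,j} - k_j \in \bZ_{>0}$. There the antinef bound yields only the weak inequality $\lambda_1 e_{1,j} + \cdots + \lambda_r e_{r,j} - k_j \leqslant 1 + e_j^{(1-\varepsilon)\lambdab}$, and one must invoke the strict exclusion $E_j \not\leqslant G_{\lambda}$ (which forbids equality in that bound) to upgrade it to the sharper $\lambda_1 e_{1,j} + \cdots + \lambda_r e_{r,j} - k_j \leqslant e_j^{(1-\varepsilon)\lambdab}$ that is actually needed. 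Apart from this subtlety, the remaining work is a routine case analysis tracking floor operations when the exponent is non-integer or non-positive.
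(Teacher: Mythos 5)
Your proof is correct, and part (i) proceeds essentially as the paper does: establish the coefficientwise chain $B\leqslant A \leqslant D_{\lambdab}$ (with $B=\lfloor \lambda_1 F_1+\cdots+\lambda_r F_r - K_\pi\rfloor$), then conclude equality of the pushed-forward ideals. A tiny remark: your justification that for $E_j\leqslant G_{\lambdab}$ one has $e_j^{(1-\varepsilon)\lambdab}=\lfloor(1-\varepsilon)(\lambda_1 e_{1,j}+\cdots+\lambda_r e_{r,j})-k_j\rfloor$ relies on the defining equality of $G_{\lambdab}$; the paper reads this the same way, so there is no gap.

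Where you genuinely diverge is part (ii). The paper first proves the inclusion $\pi_*\Oc_{X'}(-A')\supseteq \J(\fab^{\lambdab})$ using $A'\leqslant \lfloor \lambda_1 F_1+\cdots+\lambda_r F_r-K_\pi\rfloor\leqslant D_{\lambdab}$ (noting in passing $G_{\lambdab}\leqslant H_{\lambdab}$, an undefined relic from an earlier draft), and then, to get the reverse inclusion, introduces an ad hoc auxiliary divisor $D$ defined by a three-way case split on $E_i\leqslant G_{\lambdab}$, $E_i\leqslant H_{\lambdab}$, and the rest, and carefully sandwiches $D'=\widetilde{A'}$ between $D$ and $D_{\lambdab}$. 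Your route is shorter and self-contained: show $A'\leqslant A$ and $\widetilde{A'}\geqslant A$ (the latter by observing $v_j(A')=v_j(A)$ for $E_j\leqslant G_{\lambdab}$ and $\widetilde{A'}\geqslant D_{(1-\varepsilon)\lambdab}$ for the rest), and then invoke Proposition \ref{semicont}(i) directly to reduce (ii) to (i). This avoids the auxiliary divisor entirely, and in particular never needs $H_{\lambdab}$; it is arguably a cleaner presentation of the same underlying comparison.
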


\begin{proof} Let
 $D_{\lambdab}= \sum e_i^{\lambdab}E_i$ be the antinef closure of
 $\lfloor \lambda_1 F_1+\cdots+\lambda_rF_r - K_\pi \rfloor$.

\vskip 2mm

i) Since $G_{\lambdab}$ is a jumping divisor we have $\lfloor \lambda_1 F_1+\cdots+\lambda_rF_r
- K_\pi \rfloor -G_\lambda \leqslant D_{(1-\varepsilon)\lambdab}$, and
 hence $\lfloor \lambda_1 F_1+\cdots+\lambda_rF_r - K_\pi \rfloor  \leqslant D_{(1-\varepsilon)\lambdab} + G_{\lambdab}$. This gives the inclusion
 $$\pi_{*}\Oc_{X'}(-D_{(1-\varepsilon)\lambdab}-G_{\lambdab})\subseteq \J(\fab^{\lambdab}).$$

 \vskip 2mm

In order to check the reverse inclusion
$\pi_{*}\Oc_{X'}(-D_{(1-\varepsilon)\lambdab}-G_{\lambdab})\supseteq
\J(\fab^{\lambdab})$, it is enough, using Corollary \ref{semicont},
to prove $v_i(D_{(1-\varepsilon)\lambdab}+G_{\lambdab}) \leqslant
v_i(D_{\lambdab})=e_i^{\lambdab}$ for any component $E_i$. We have
$e_i^{(1-\varepsilon)\lambdab} \leqslant e_i^{\lambdab}$ just because
$\J(\fab^{(1-\varepsilon)\lambdab})\varsupsetneq  \J(\fab^{\lambdab})$ and the
inequality is strict when $E_i \leqslant G_{\lambdab}$, so the result
follows.

\vskip 2mm

ii) Let $D'$ be the antinef closure of $\lfloor (1-\varepsilon)\lambda_1 F_1+\cdots+(1-\varepsilon)\lambda_rF_r - K_\pi \rfloor + G_{\lambdab}$. Since $G_{\lambdab}
\leqslant H_{\lambdab}$ we have

 $$ \lfloor (1-\varepsilon)\lambda_1 F_1+\cdots+(1-\varepsilon)\lambda_rF_r - K_\pi \rfloor + G_{\lambdab}
 \leqslant \lfloor (1-\varepsilon)\lambda_1 F_1+\cdots+(1-\varepsilon)\lambda_rF_r - K_\pi \rfloor \leqslant D_{\lambdab}$$

  \vskip 2mm

\noindent so the inclusion $\pi_{*}\Oc_{X'}(\lceil K_\pi
-(1-\varepsilon)\lambda_1 F_1-\cdots-(1-\varepsilon)\lambda_rF_r \rceil - G_{\lambdab})\supseteq
\J(\fab^{\lambdab})$ holds. In order to prove the reverse inclusion, we
will introduce an auxiliary divisor $D=\sum d_iE_i \in \Lambda$
defined as follows:

  \vskip 2mm

\begin{tabular}{ll}
 $\cdot$  $d_i= \lfloor (1-\varepsilon)\lambda_1 e_{1,i}+\cdots+(1-\varepsilon)\lambda_re_{r,i}-k_i\rfloor +1$&  if $E_i\leqslant G_{\lambdab}$,\\
 $\cdot$  $d_i= e_i^{(1-\varepsilon)\lambdab}$ & if  $E_i\leqslant H_{\lambdab}$ but $E_i\not\leqslant G_{\lambdab}$,\\
 $\cdot$  $d_i= \lfloor (1-\varepsilon)\lambda_1 e_{1,i}+\cdots+(1-\varepsilon)\lambda_re_{r,i}-k_i\rfloor $ & otherwise.
\end{tabular}

  \vskip 2mm

  Clearly we have  $\lfloor (1-\varepsilon)\lambda_1 F_1+\cdots+(1-\varepsilon)\lambda_rF_r - K_\pi \rfloor + G_{\lambdab} \leqslant D$, but we also have
$\lfloor (1-\varepsilon)\lambda_1 F_1+\cdots+(1-\varepsilon)\lambda_rF_r - K_\pi \rfloor  \leqslant D$. Indeed,

  \vskip 2mm

\begin{itemize}
   \item[$\cdot$]  For $E_i\leqslant G_{\lambdab}$ we have
   \begin{align*}
    \lfloor \lambda_1 e_{1,i}+\cdots+\lambda_re_{r,i} -k_i \rfloor &= \lambda_1 e_{1,i}+\cdots+\lambda_re_{r,i} -k_i \\
    &= \lfloor (1-\varepsilon)\lambda_1 e_{1,i}+\cdots+(1-\varepsilon)\lambda_re_{r,i}-k_i\rfloor +1= d_i\,.
   \end{align*}

   \item[$\cdot$]  If $\lambda$ is a candidate for $E_i$ but $E_i\not\leqslant G_{\lambdab}$,
   \begin{align*}
     \lfloor \lambda_1 e_{1,i}+\cdots+\lambda_re_{r,i} -k_i  = \lambda_1 e_{1,i}+\cdots+\lambda_re_{r,i} -k_i  < 1 + e_i^{(1-\varepsilon)\lambdab}\,,
   \end{align*}

  \noindent  hence $\lfloor \lambda_1 e_{1,i}+\cdots+\lambda_re_{r,i} -k_i \rfloor \leqslant e_i^{(1-\varepsilon)\lambdab}=d_i\,.$

   \item[$\cdot$]  Otherwise
   \begin{align*}
    \lfloor \lambda_1 e_{1,i}+\cdots+\lambda_re_{r,i} -k_i\rfloor =
    \lfloor (1-\varepsilon)\lambda_1 e_{1,i}+\cdots+(1-\varepsilon)\lambda_re_{r,i}-k_i\rfloor = d_i\,.
   \end{align*}

  \end{itemize}

    \vskip 2mm

  Therefore, taking antinef closures, we have $D' \leqslant D_{\lambdab} \leqslant \widetilde{D}$. On the other hand $D\leqslant D'$.
  Namely, $v_i(D')\geqslant e_i^{(1-\varepsilon)\lambdab}$ at any $E_i$ because
$$\lfloor (1-\varepsilon)\lambda_1 F_1+\cdots+(1-\varepsilon)\lambda_rF_r - K_\pi \rfloor
\leqslant \lfloor (1-\varepsilon)\lambda_1 F_1+\cdots+(1-\varepsilon)\lambda_rF_r - K_\pi \rfloor + G_{\lambdab}\,.$$

  Moreover, $v_i(D')\geqslant \lfloor (1-\varepsilon)\lambda_1 e_{1,i}+\cdots+(1-\varepsilon)\lambda_re_{r,i}-k_i\rfloor +\delta_i^{G_{\lambdab}}$ by definition of antinef closure. Here,
  $\delta_i^{G_{\lambdab}}=1$ if $E_i\leqslant G_{\lambdab}$ and zero otherwise. Thus $v_i(D')\geqslant v_i(D)$ as desired.
 As a consequence $\widetilde{D} \leqslant D'$, which, together with the previous  $D' \leqslant D_{\lambdab} \leqslant \widetilde{D}$,
 gives $\widetilde{D} = D'=  D_{\lambdab}$ and the result follows.
\end{proof}

\subsection{Geometric properties of minimal jumping divisors in the dual graph}

We proved in \cite[Theorem 4.17]{ACAMDC13} that minimal jumping divisors associated to
satisfy some geometric conditions in the dual graph in the case of multiplier ideals.
The same properties hold for mixed multiplier ideals.
More interestingly, the forthcoming
Theorem \ref{thm:geo_dual_graphMMI} is the key result that we need in the proof of Theorem \ref{thm:region}.

\begin{lemma} \label{lem:numMMI}
Let $\lambdab$ be a jumping point of a tuple of ideals
 $\fab:=\left(\fa_1,\ldots,\fa_r\right)\subseteq \left(\cO_{X,O}\right)^r$.
For any component $E_i\leqslant G_{\lambdab}$ of the
minimal jumping divisor $G_{\lambdab}$ we have:

\begin{multline*}
\left(\left\lceil K_\pi-\lambda_1 F_1-\cdots-\lambda_r F_r\right\rceil + G_{\lambdab}\right)\cdot E_i 
\\= -2 + \lambda_1 \rho_{1,i}+\cdots+ \lambda_r \rho_{r,i} + a_{G_{\lambdab}}\left(E_i\right) +
 \sum_{ E_j \in \adj(E_i)} \left\{\lambda_1 e_{1,j}+\cdots+\lambda_r e_{r,j}-k_j\right\}\,.
 \end{multline*}
where $\adj(E_i)$ denotes the adjacent components of $E_i$ in the dual graph.
\end{lemma}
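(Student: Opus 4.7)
The plan is a direct computation splitting the left-hand side into the three natural pieces: the rational divisor $K_\pi - \sum_l \lambda_l F_l$, the correction from taking round-up, and the reduced divisor $G_{\lambdab}$. Then I would evaluate each piece against $E_i$ using the standard combinatorics of the dual graph.

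First I would write $\lceil K_\pi - \sum_l \lambda_l F_l \rceil = K_\pi - \sum_l \lambda_l F_l + \Delta$, where $\Delta = \sum_j \{\sum_l \lambda_l e_{l,j} - k_j\} \, E_j$ (here $\{x\} = x - \lfloor x \rfloor$ denotes the fractional part). A short verification on each coefficient gives this identity. Intersecting $K_\pi - \sum_l \lambda_l F_l$ with $E_i$ and using the adjunction-type relation $(K_\pi + E_i)\cdot E_i = -2$ together with the definition $\rho_{l,i} = -F_l \cdot E_i$ of the excess, produces the contribution
\[
\bigl(K_\pi - \textstyle\sum_l \lambda_l F_l\bigr)\cdot E_i \;=\; -2 - E_i^2 + \sum_l \lambda_l \rho_{l,i}.
\]

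Next I would intersect $\Delta$ with $E_i$. Since $E_i \leqslant G_{\lambdab}$, the defining property of the minimal jumping divisor (equivalently, that $\lambdab$ lies on the supporting hyperplane $H_i$) forces $\sum_l \lambda_l e_{l,i} - k_i \in \bZ_{>0}$, hence the coefficient of $E_i$ in $\Delta$ vanishes. Using the simple normal crossings hypothesis ($E_j\cdot E_i = 1$ for $E_j \in \adj(E_i)$ and $0$ otherwise), one obtains
\[
\Delta \cdot E_i \;=\; \sum_{E_j \in \adj(E_i)} \bigl\{\textstyle\sum_l \lambda_l e_{l,j} - k_j\bigr\}.
\]
Finally, since $G_{\lambdab}$ is reduced with $E_i$ appearing with coefficient $1$, a direct count gives $G_{\lambdab}\cdot E_i = E_i^2 + a_{G_{\lambdab}}(E_i)$, where $a_{G_{\lambdab}}(E_i)$ records the number of components of $G_{\lambdab}$ adjacent to $E_i$.

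Adding the three contributions, the $E_i^2$ terms cancel and the stated formula follows. No step is really a serious obstacle: the only subtlety is bookkeeping the fractional parts and observing that the $E_i$-coefficient of $\Delta$ is zero precisely because $E_i \leqslant G_{\lambdab}$; this is where the hypothesis that $E_i$ belongs to the minimal jumping divisor enters crucially, since for a general component this coefficient would typically be nonzero and would spoil the clean formula.
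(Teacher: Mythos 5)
Your proof is correct and takes essentially the same route as the paper: both decompose $\left\lceil K_\pi-\lambda_1F_1-\cdots-\lambda_rF_r\right\rceil$ into the $\bQ$-divisor plus its coefficientwise fractional-part correction, then apply adjunction, the definition of the excesses $\rho_{l,i}$, and the integrality of $\lambda_1e_{1,i}+\cdots+\lambda_re_{r,i}-k_i$ for $E_i\leqslant G_{\lambdab}$ to make the $E_i$-term of the correction vanish. The only cosmetic difference is that you keep the $E_i^2$ terms explicit and let them cancel, while the paper absorbs them by grouping the decomposition as $(K_\pi+E_i)+(G_{\lambdab}-E_i)$.
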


\begin{proof}

For any $E_i\leqslant G_{\lambdab}$  we have:

\vskip 2mm

$(\left\lceil K_{\pi}-\lambda_1 F_1-\cdots-\lambda_rF_r\right\rceil + G_{\lambdab})\cdot E_i $

\vskip 2mm

$\hskip .5cm =(\left(K_\pi-\lambda_1 F_1-\cdots-\lambda_rF_r\right)+ \left\{-K_\pi+
\lambda_1 F_1+\cdots+\lambda_rF_r\right\} + G_{\lambdab} - E_i + E_i)\cdot E_i $

\vskip 2mm

$\hskip .5cm = \left(K_{\pi}+ E_i\right)\cdot E_i - (\lambda_1 F_1+\cdots+\lambda_rF_r)\cdot E_i +
\left\{\lambda_1 F_1+\cdots+\lambda_rF_r-K_{\pi}\right\}\cdot E_i +
\left(G_{\lambdab}-E_i\right)\cdot E_i.$

\vskip 2mm

Let us now compute each summand separately. Firstly, the adjunction
formula gives $\left(K_{\pi}+E_i\right)\cdot E_i = -2$ because $E_i
\cong \bP^1$. As for the second and fourth terms, the equality
$-(\lambda_1 F_1+\cdots+\lambda_rF_r) \cdot E_i = \lambda_1\rho_{1,i}+\cdots+\lambda_r\rho_{r,i}$ follows from the definition
of the excesses, and clearly $a_{G_{\lambdab}}\left(E_i\right) =
\left(G_{\lambdab}-E_i\right)\cdot E_i$ because $E_i \leqslant
G_{\lambdab}$.
Therefore it only remains to prove that
\begin{equation} \label{eq:decimal-partMMI}
\left\{\lambda_1 F_1+\cdots+\lambda_rF_r-K_{\pi}\right\}\cdot E_i = \sum_{ E_j \in
\adj(E_i)} \left\{\lambda_1 e_{1,j}+\cdots+\lambda_r e_{r,j}-k_j\right\},
\end{equation}
which is also quite immediate. Indeed, writing
\[\left\{\lambda_1 F_1+\cdots+\lambda_rF_r-K_{\pi}\right\} = \sum_{j=1}^\ell\left\{\lambda_1e_{1,j}+\cdots+\lambda_re_{r,j}-k_j\right\}E_j\,,\]
equality (\ref{eq:decimal-partMMI}) follows by observing that (for $j
\neq i$), $E_j \cdot E_i = 1$ if and only if $E_j \in
\adj\left(E_i\right)$, and the term corresponding to $j=i$ vanishes
because we have ${\lambda_1 e_{1,i}+\cdots+\lambda_r e_{r,i}-k_i \in \bZ}$.
\end{proof}


\begin{corollary} \label{cor:integerMMI}
Let $\lambdab$ be a jumping point of a tuple of ideals $\fab \subseteq
\left(\cO_{X,O}\right)^r$. For any component $E_i\leqslant G_{\lambdab}$ of the
minimal jumping divisor $G_{\lambdab}$ we have:
$$
\lambda_1 \rho_{1,i}+\cdots+ \lambda_r \rho_{r,i} + a_{G_{\lambdab}}\left(E_i\right) + \sum_{ E_j \in \adj(E_i)} \left\{\lambda_1 e_{1,j}+\cdots+\lambda_r e_{r,j}-k_j\right\} \in \Z
$$

\end{corollary}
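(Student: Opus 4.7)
The plan is to derive this corollary as an immediate consequence of Lemma \ref{lem:numMMI}. The lemma provides the explicit formula
\[
\bigl(\lceil K_\pi - \lambda_1 F_1 - \cdots - \lambda_r F_r\rceil + G_{\lambdab}\bigr)\cdot E_i = -2 + \lambda_1\rho_{1,i} + \cdots + \lambda_r\rho_{r,i} + a_{G_{\lambdab}}(E_i) + \sum_{E_j\in\adj(E_i)}\{\lambda_1 e_{1,j}+\cdots+\lambda_r e_{r,j} - k_j\},
\]
so the strategy is simply to read off an integrality statement from this identity.

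The key observation is that the left-hand side is an intersection number of two \emph{integral} divisors on the smooth surface $X'$, and is therefore an integer. Indeed, $\lceil K_\pi - \lambda_1 F_1 - \cdots - \lambda_r F_r\rceil$ is integral by construction (it is a round-up of a $\bQ$-divisor), $G_{\lambdab}$ is a reduced, hence integral, divisor, and $E_i$ is an integral prime divisor; so their intersection numbers are integers.

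Subtracting the integer $-2$ from both sides of the identity of Lemma \ref{lem:numMMI}, we conclude that
\[
\lambda_1\rho_{1,i} + \cdots + \lambda_r\rho_{r,i} + a_{G_{\lambdab}}(E_i) + \sum_{E_j\in\adj(E_i)}\{\lambda_1 e_{1,j}+\cdots+\lambda_r e_{r,j} - k_j\} \in \bZ,
\]
which is precisely the claim. There is no real obstacle here; the work has already been done in Lemma \ref{lem:numMMI}, and the corollary is a direct numerical consequence.
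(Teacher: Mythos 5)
Your proof is correct and is exactly the intended argument: the paper states this as an unproved corollary of Lemma \ref{lem:numMMI}, implicitly relying on the fact that the left-hand side of the lemma's formula is an intersection number of integral divisors on the smooth surface $X'$, hence an integer. Adding $2$ to both sides then gives the claim.
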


As in the case of multiplier ideals, minimal jumping divisors satisfy a nice numerical condition.

\begin{proposition} \label{prop:Num_conditionMMI}
Let $\lambdab$ be a jumping point of a tuple of ideals $\fab \subseteq
\left(\cO_{X,O}\right)^r$. For any component $E_i\leqslant G_{\lambdab}$ of the
minimal jumping divisor $G_{\lambdab}$, we have
$$\left(\left\lceil K_{\pi}- \lambda_1 F_1-\cdots-\lambda_r F_r\right\rceil +
G_{\lambdab}\right)\cdot E_i\geqslant 0.$$

\end{proposition}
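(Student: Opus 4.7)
The plan is to argue by contradiction, closely following the strategy of \cite[Theorem 4.17]{ACAMDC13} and combining Theorem~\ref{thm:jump1MMI} with the unloading procedure described in Section~\ref{unloading}. Set
\[E := \lceil K_\pi - \lambda_1 F_1 - \cdots - \lambda_r F_r\rceil + G_\lambdab,\]
so that $-E = \lfloor \lambda_1 F_1 + \cdots + \lambda_r F_r - K_\pi\rfloor - G_\lambdab$, and the goal is to show $E \cdot E_i \geqslant 0$.

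First, since $G_\lambdab$ is a jumping divisor, Theorem~\ref{thm:jump1MMI} yields $\pi_*\Oc_{X'}(E) = \J(\fab^{(1-\varepsilon)\lambdab}) = \pi_*\Oc_{X'}(-D_{(1-\varepsilon)\lambdab})$. By the one-to-one correspondence between complete ideals and antinef divisors, the antinef closure $\widetilde{-E}$ of $-E$ must coincide with $D_{(1-\varepsilon)\lambdab}$; in particular $v_i(\widetilde{-E}) = e_i^{(1-\varepsilon)\lambdab}$. On the other hand, because $E_i \leqslant G_\lambdab$, the very definition of the minimal jumping divisor gives the integer identity
\[\lambda_1 e_{1,i} + \cdots + \lambda_r e_{r,i} - k_i = 1 + e_i^{(1-\varepsilon)\lambdab},\]
so a direct computation gives $v_i(-E) = \lfloor \lambda_1 e_{1,i} + \cdots + \lambda_r e_{r,i} - k_i\rfloor - 1 = e_i^{(1-\varepsilon)\lambdab}$ as well. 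Consequently $v_i(\widetilde{-E}) = v_i(-E)$.

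Now suppose for contradiction that $E \cdot E_i < 0$, i.e.\ $(-E)\cdot E_i > 0$, so that $-E$ fails to be antinef at $E_i$. Examining how the unloading procedure modifies the intersection with $E_i$, adding a positive multiple $n_j E_j$ with $E_j \in \adj(E_i)$ changes this intersection by $n_j\, E_j \cdot E_i > 0$, while unloading at non-adjacent components leaves it unchanged. Hence the only way for the antinef closure $\widetilde{-E}$ to end up satisfying $\widetilde{-E} \cdot E_i \leqslant 0$ is for some step of the procedure to act on $E_i$ itself, which strictly increases $v_i$ by at least $1$. This contradicts the equality $v_i(\widetilde{-E}) = v_i(-E)$ derived above, and therefore $E \cdot E_i \geqslant 0$.

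The main obstacle will be the careful bookkeeping of sign conventions (ceilings versus floors, antinef condition versus negative excess) and making rigorous the claim that the unloading procedure must necessarily act on $E_i$ at some step; this rests entirely on the observation that unloadings at neighbors of $E_i$ only worsen the non-antinef condition at $E_i$, forcing the correction to occur at $E_i$ itself.
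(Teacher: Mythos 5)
Your proof is correct and takes a genuinely different route from the paper's. The paper argues cohomologically: it restricts $\Oc_{X'}(\lceil K_\pi - \sum \lambda_j F_j\rceil + G_{\lambdab})$ to $E_i$, pushes the resulting short exact sequence forward to $X$ (using the rational singularity for exactness), invokes the \emph{minimality} of $G_{\lambdab}$ (Corollary~\ref{cor:unic_min_jdMMI}) to conclude that the inclusion of the two pushforwards is strict, deduces $H^0\bigl(E_i, \Oc_{E_i}(\lceil K_\pi - \sum \lambda_j F_j\rceil + G_{\lambdab})\bigr) \neq 0$, and reads off the degree inequality. You instead work purely at the level of divisor combinatorics: the jumping-divisor property gives $\widetilde{-E} = D_{(1-\varepsilon)\lambdab}$, the arithmetic condition defining $G_{\lambdab}$ gives $v_i(-E) = e_i^{(1-\varepsilon)\lambdab}$, so $v_i(\widetilde{-E}) = v_i(-E)$, and then the monotonicity of unloading shows that negative excess at $E_i$ would force $v_i$ to strictly increase during the closure computation. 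Your route trades the cohomological machinery for the combinatorics of unloading and, notably, does not need the minimality of $G_{\lambdab}$ at all --- only that it \emph{is} a jumping divisor together with its defining arithmetic property. This makes the dependency graph cleaner. One point worth flagging explicitly in a final write-up: the unloading procedure only acts on, and only constrains, \emph{exceptional} components, so your argument silently restricts to exceptional $E_i \leqslant G_{\lambdab}$; the paper's cohomological proof has a parallel implicit restriction (the step $H^0 \neq 0 \Leftrightarrow \deg \geqslant 0$ uses $E_i \cong \bP^1$), so this is not a defect of your approach relative to theirs, but it should be stated.
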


\begin{proof}

Given a prime divisor $E_i\leqslant G_{\lambdab}$, we consider the short exact
sequence
\begin{align*}
&0 \longrightarrow \Oc_{X'}\left(\left\lceil K_{\pi}- \lambda_1 F_1-\cdots -\lambda_rF_r\right\rceil + G_{\lambdab} - E_i\right) \longrightarrow\\
&\qquad\longrightarrow \Oc_{X'}\left(\left\lceil K_{\pi}- \lambda_1 F_1-\cdots -\lambda_rF_r\right\rceil + G_{\lambdab}\right) \longrightarrow \\
&\qquad\qquad\longrightarrow \Oc_{E_i}\left(\left\lceil K_{\pi}- \lambda_1 F_1-\cdots -\lambda_rF_r\right\rceil + G_{\lambdab}\right) \longrightarrow 0
\end{align*}
Pushing it forward to $X$, we get
\begin{align*}
&0 \longrightarrow \pi_*\Oc_{X'}\left(\left\lceil K_{\pi}- \lambda_1 F_1-\cdots -\lambda_rF_r\right\rceil + G_{\lambdab} - E_i\right) \longrightarrow\\
&\qquad\longrightarrow \pi_*\Oc_{X'}\left(\left\lceil K_{\pi}- \lambda_1 F_1-\cdots -\lambda_rF_r\right\rceil + G_{\lambdab}\right) \longrightarrow \\
&\qquad\qquad\longrightarrow H^0\left(E_i,\Oc_{E_i}\left(\left\lceil K_{\pi}-\lambda_1 F_1-\cdots -\lambda_rF_r\right\rceil + G_{\lambda}\right)\right) \otimes \bC_O,
\end{align*}
where $\bC_O$ denotes the skyscraper sheaf supported at $O$ with
fibre $\bC$. The minimality of $G_{\lambdab}$  (see Corollary \ref{cor:unic_min_jdMMI}) implies that

$$\pi_*\Oc_{X'}\left(\left\lceil K_{\pi}- \lambda_1 F_1-\cdots -\lambda_rF_r\right\rceil + G_{\lambdab} - E_i\right) \neq \pi_*\Oc_{X'}\left(\left\lceil K_{\pi}-\lambda_1 F_1-\cdots -\lambda_rF_r\right\rceil + G_{\lambdab}\right).$$

\vskip 2mm

Thus
$H^0\left(E_i,\Oc_{E_i}\left(\left\lceil K_{\pi}- \lambda_1 F_1-\cdots -\lambda_rF_r\right\rceil + G_{\lambdab}\right)\right) \neq 0$, or equivalently
$$\left(\left\lceil K_{\pi}- \lambda_1 F_1-\cdots -\lambda_rF_r\right\rceil +
G_{\lambdab}\right)\cdot E_i\geqslant 0.$$
\end{proof}


With the above ingredients we can provide the desired geometric
property of the minimal jumping divisors when viewed in the dual graph.

\begin{theorem}\label{thm:geo_dual_graphMMI}
Let $G_{\lambdab}$ be the minimal jumping divisor associated to a
jumping point $\lambdab$ of a tuple of ideals $\fab
\subseteq \left(\cO_{X,O}\right)^r$. Then
the ends of a connected component of $G_{\lambdab}$ must be either
rupture or dicritical divisors.
\end{theorem}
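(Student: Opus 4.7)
My plan is to argue by contradiction: assume an end $E_i$ of a connected component of $G_{\lambdab}$ is neither rupture nor dicritical, and derive a contradiction from the numerical condition provided by Proposition \ref{prop:Num_conditionMMI}.

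The main tool is to apply Lemma \ref{lem:numMMI} to expand the intersection number
$(\lceil K_\pi - \lambda_1 F_1 - \cdots - \lambda_r F_r\rceil + G_{\lambdab}) \cdot E_i$, which Proposition \ref{prop:Num_conditionMMI} guarantees is $\geqslant 0$. This gives
\[
-2 + \sum_l \lambda_l \rho_{l,i} + a_{G_{\lambdab}}(E_i) + \sum_{E_j \in \adj(E_i)} \{\lambda_1 e_{1,j} + \cdots + \lambda_r e_{r,j} - k_j\} \geqslant 0.
\]
Now I exploit three hypotheses. First, if $E_i$ is not dicritical, then $\rho_{l,i} = 0$ for all $l$, killing the first sum. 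Second, if $E_i$ is not a rupture component then $|\adj(E_i)| \leqslant 2$, so the fractional-part sum has at most two terms, each strictly less than $1$, and so is strictly less than $2$. Third — and this is the key observation — for any neighbor $E_j \leqslant G_{\lambdab}$, the definition of the minimal jumping divisor forces $\lambda_1 e_{1,j} + \cdots + \lambda_r e_{r,j} - k_j \in \Z_{>0}$, so the corresponding fractional part vanishes.

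I would then split into two cases according to how $E_i$ sits in its connected component. If $E_i$ is an isolated component of $G_{\lambdab}$, then $a_{G_{\lambdab}}(E_i)=0$ and no neighbor of $E_i$ lies in $G_{\lambdab}$; the inequality becomes $-2 + [\,{<}\,2\,] \geqslant 0$, a contradiction. If $E_i$ is a leaf of a larger connected component, then $a_{G_{\lambdab}}(E_i)=1$ and at least one neighbor $E_{j_0}$ lies in $G_{\lambdab}$, contributing $0$ to the fractional-part sum; the remaining (at most one) neighbor contributes a fractional part $<1$, so the inequality becomes $-2 + 1 + [\,{<}\,1\,] \geqslant 0$, again a contradiction.

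The main obstacle to watch out for is bookkeeping in the non-strict inequalities: one must be careful that the fractional parts are \emph{strictly} less than $1$ (they are, being fractional parts of real numbers, and the components of $G_{\lambdab}$ itself force an \emph{integer} value, hence fractional part exactly $0$, not merely ``small''). Once that is clear, the contradictions in both cases are immediate and the theorem follows.
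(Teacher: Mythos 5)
Your proof is correct and follows essentially the same strategy as the paper's: assume the end is neither rupture nor dicritical, expand the intersection number via Lemma \ref{lem:numMMI}, use that excesses vanish and that fractional parts of components in $G_{\lambdab}$ are exactly zero, and contradict Proposition \ref{prop:Num_conditionMMI}. The only cosmetic difference is that the paper enumerates four cases (one or two dual-graph neighbors, times isolated or leaf), whereas you fold the dual-graph-neighbor count into a single ``at most two terms, each $<1$'' bound and split only on isolated versus leaf; both give the same contradiction.
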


\begin{proof}
Assume that an end $E_i$ of a connected component of $G_{\lambdab}$ is
neither a rupture nor a dicritical divisor. It means that $E_i$ has
no excess, i.e., $\rho_{j,i}=0$ for all $E_j$ of the resolution, and that it has one or two adjacent
divisors, say $E_j$ and $E_l$, in the dual graph but at most one of
them belongs to $G_{\lambdab}$.

\vskip 2mm

For the case that $E_i$ has two adjacent divisors $E_j$ and $E_l$,
the formula given in Lemma \ref{lem:numMMI} reduces to

\begin{align*}
& ( \lceil K_\pi- \lambda_1 F_1-\cdots -\lambda_rF_r\rceil + G_{\lambdab})\cdot E_i \\
&\qquad=-2 + \left\{ \lambda_1 e_{1,j} + \cdots  + \lambda_r e_{r,j}-k_j\right\} + \left\{ \lambda_1 e_{1,l} + \cdots  + \lambda_r e_{r,l} -k_l\right\}\\
&\qquad\qquad + \lambda_1 \rho_{1,i} + \cdots + \lambda_r \rho_{r,i}+ a_{G_{\lambdab}}(E_i)\,.
\end{align*}
Then:

\vskip 2mm

\begin{itemize}
\item[$\cdot$] If $E_i$ has valence one in $G_{\lambdab}$, e.g. $E_l \not \leqslant
G_{\lambdab}$, then $$(\lceil K_\pi- \lambda_1 F_1-\cdots -\lambda_rF_r\rceil + G_{\lambdab})\cdot
E_i= -2 + \left\{ \lambda_1 e_{1,l} + \cdots  + \lambda_r e_{r,l}  -k_l \right\}+ 1 < 0\,. $$

\vskip 2mm

\item[$\cdot$] If $E_i$ is an isolated component of $G_{\lambdab}$, i.e., $E_j, E_l \not \leqslant
G_{\lambdab}$, then
\begin{align*}
&( \lceil K_\pi- \lambda_1 F_1-\cdots -\lambda_rF_r\rceil + G_{\lambdab})\cdot E_i\\
& \qquad = -2 + \left\{ \lambda_1 e_{1,j} + \cdots  + \lambda_r e_{r,j}  -k_j\right\} + \left\{ \lambda_1 e_{1,l} + \cdots  + \lambda_r e_{r,l}
-k_l\right\} <0.
\end{align*}

\end{itemize}

\vskip 2mm

If $E_i$ has just one adjacent divisor $E_j$, i.e. $E_i$ is an end
of the dual graph, the formula reduces to
\begin{align*}
& ( \lceil K_\pi- \lambda_1 F_1-\cdots -\lambda_rF_r\rceil + G_{\lambdab})\cdot E_i\\
&\qquad=-2 + \left\{ \lambda_1 e_{1,j} + \cdots  + \lambda_r e_{r,j}-k_j\right\} + \lambda_1 \rho_{1,i} + \cdots + \lambda_r \rho_{r,i}+ a_{G_{\lambdab}}(E_i)\,.
\end{align*}
Therefore,

\begin{itemize}
\item[$\cdot$] If $E_i$ has valence one in $G_{\lambdab}$, then \[( \lceil K_\pi- \lambda_1 F_1-\cdots -\lambda_rF_r\rceil + G_{\lambdab})\cdot E_i= -2 + 1 < 0 \,.\]

\vskip 2mm

\item[$\cdot$] If $E_i$ is an isolated component of $G_{\lambdab}$, then $$( \lceil K_\pi- \lambda_1 F_1-\cdots -\lambda_rF_r\rceil + G_{\lambdab})\cdot E_i= -2 + \left\{
\lambda_1 e_{1,j} + \cdots  + \lambda_r e_{r,j}  -k_j\right\}  <0 .$$
\end{itemize}

\vskip 2mm

In any case we get a contradiction with Proposition
\ref{prop:Num_conditionMMI}.
\end{proof}

As a consequence of this result we can also provide the following refinement of Proposition \ref{prop:Num_conditionMMI}.

\begin{proposition} \label{prop:Num_condition2MMI}
Let $\lambdab$ be a jumping point of a tuple of ideals $\fab
\subseteq \left(\cO_{X,O}\right)^r$. If $E_i\leqslant G_{\lambdab}$ is neither a
rupture nor a dicritical component of the minimal jumping divisor
$G_{\lambdab}$ we have $$\left(\left\lceil K_{\pi}- \lambda_1 F_1 -\cdots - \lambda_r
F_r\right\rceil + G_{\lambdab}\right)\cdot E_i= 0.$$
\end{proposition}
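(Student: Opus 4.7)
The plan is to show the intersection number equals zero by directly computing it via Lemma \ref{lem:numMMI} and combining the hypotheses with Theorem \ref{thm:geo_dual_graphMMI}. I would start by applying Lemma \ref{lem:numMMI} to the component $E_i\leqslant G_{\lambdab}$, which gives
\[
\left(\lceil K_{\pi}-\lambda_1 F_1-\cdots-\lambda_r F_r\rceil + G_{\lambdab}\right)\cdot E_i = -2 + \sum_\ell \lambda_\ell \rho_{\ell,i} + a_{G_{\lambdab}}(E_i) + \sum_{E_j\in\adj(E_i)}\{\lambda_1 e_{1,j}+\cdots+\lambda_r e_{r,j}-k_j\}.
\]
Since $E_i$ is not dicritical, $\rho_{\ell,i}=0$ for every $\ell$, so the second summand vanishes.

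Next I would use the hypothesis that $E_i$ is not a rupture component, which means $E_i$ has at most two adjacent divisors in the full dual graph. In particular $a_{G_{\lambdab}}(E_i)\leqslant 2$. On the other hand, Theorem \ref{thm:geo_dual_graphMMI} asserts that the ends (including isolated vertices) of a connected component of $G_{\lambdab}$ must be rupture or dicritical; since $E_i$ is neither, $E_i$ cannot be an end of its connected component in $G_{\lambdab}$, so $a_{G_{\lambdab}}(E_i)\geqslant 2$. Combining these two inequalities forces $a_{G_{\lambdab}}(E_i)=2$ and, moreover, both adjacent divisors of $E_i$ in the dual graph lie in $G_{\lambdab}$.

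Finally, for any $E_j\leqslant G_{\lambdab}$, the defining property of the minimal jumping divisor gives $\lambda_1 e_{1,j}+\cdots+\lambda_r e_{r,j}-k_j = 1 + e_j^{(1-\varepsilon)\lambdab}\in\bZ$, so the fractional parts in the last sum all vanish. Plugging everything back into the formula yields $-2 + 0 + 2 + 0 = 0$, as desired. I do not expect any real obstacle here: the work is essentially a bookkeeping exercise once Theorem \ref{thm:geo_dual_graphMMI} is in hand, and the only subtlety is making sure that ``end of a connected component'' in that theorem is interpreted to include the isolated-vertex case, so that valence one and valence zero possibilities are both ruled out by the non-rupture, non-dicritical assumption.
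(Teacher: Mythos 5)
Your argument is correct and follows essentially the same route as the paper's proof: apply Lemma \ref{lem:numMMI}, kill the excess term using the non-dicritical hypothesis, invoke Theorem \ref{thm:geo_dual_graphMMI} to conclude $E_i$ is not an end of its connected component of $G_{\lambdab}$, combine with the non-rupture hypothesis to pin down $a_{G_{\lambdab}}(E_i)=2$ with both dual-graph neighbours lying in $G_{\lambdab}$, and then observe that the fractional parts vanish for components of $G_{\lambdab}$. You are slightly more explicit than the paper about the two-sided bound on $a_{G_{\lambdab}}(E_i)$ and about the isolated-vertex case, but the content is identical.
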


\begin{proof}
Assume that $E_i\leqslant G_{\lambdab}$ is neither a rupture or a
dicritical component. In particular, it is not the end of a
connected component of $G_{\lambdab}$. Thus, $E_i$ has exactly two
adjacent components $E_j$ and $E_l$ in $G_{\lambdab}$, and its excesses
are $\rho_{j,i}=0$ for all $1\leqslant j\leqslant r$. The formula given in Lemma \ref{lem:numMMI} for $G_{\lambdab}$ reduces to
\begin{align*}
 &\left(\left\lceil K_{\pi}- \lambda_1 F_1-\cdots -\lambda_rF_r\right\rceil + G_{\lambdab}\right)\cdot E_i\\
 &\qquad  = -2 + \lambda_1 \rho_{1,i}+\cdots +\lambda_r\rho_{r,i} + \left\{\lambda_1 e_{1,j} + \cdots  + \lambda_r e_{r,j}-k_j\right\} \\
 &\qquad\qquad{}+\left\{\lambda_1 e_{1,l} + \cdots  + \lambda_r e_{r,l}-k_l\right\} + a_{G_{\lambda}}\left(E_i\right)\,.
\end{align*}
Notice that $a_{G_{\lambdab}}(E_i)=2$, and also that \[\left\{\lambda_1 e_{1,j} + \cdots  + \lambda_r e_{r,j}-k_j\right\} = \left\{\lambda_1 e_{1,l} + \cdots  + \lambda_r e_{r,l}-k_l\right\} = 0\,,\] because $E_j$
and $E_l$ are components of $G_{\lambda}$, so finally
\[\left(\left\lceil K_{\pi}- \lambda_1 F_1-\cdots -\lambda_rF_r\right\rceil +
G_{\lambdab}\right)\cdot E_i = 0\,.\]
\end{proof}


\end{document}